\newtheorem{theorem}{Theorem}[section]
\newtheorem{lemma}[theorem]{Lemma}
\newtheorem{corollary}[theorem]{Corollary}
\newtheorem{definition}{Definition}
\newtheorem{remark}[theorem]{Remark}
\numberwithin{equation}{section}
\newcommand{\M}{\textit{M}}
\newcommand{\MM}{\mathcal{M}(n,\lambda,D,i_0)}
\newcommand{\MMM}{\mathcal{M}}
\newcommand{\lk}{\lambda_k}
\newcommand{\G}{\Gamma}
\newcommand{\R}{\mathbb{R}}
\newcommand{\C}{\mathcal{C}_{\mathcal{M},k}}
\newcommand{\LL}{\lambda_{\MMM,k}}
\newcommand{\lbtheta}{(1-c_\MMM r)}
\newcommand{\CC}{C_{\MMM,k}}
\newcommand{\norm}[1]{\left\lVert#1\right\rVert}
\begin{document}

\setcounter{page}{1}
\title[Approximation of Laplacian]{Graph discretization of Laplacian on Riemannian manifolds with Bounds on Ricci curvature}

\author{ Soma Maity }
\address{Department of Mathematical Sciences, Indian Institute of Science Education and Research Mohali, \newline Sector 81, SAS Nagar, Punjab- 140306, India.}
\email{somamaity@iisermohali.ac.in}

\author{ Anusha Bhattacharya}
\address{Department of Mathematical Sciences, Indian Institute of Science Education and Research Mohali, \newline Sector 81, SAS Nagar, Punjab- 140306, India.}
\email{ph21059@iisermohali.ac.in}

\subjclass{ 53C21,58J50, 58J60,35J05}
\keywords{Ricci curvature, spectrum of laplacian, graph laplacian }

\begin{abstract}

We study the approximation of eigenvalues for the Laplace-Beltrami operator on closed Riemannian manifolds in the class $\MMM$, characterized by bounded Ricci curvature, a lower bound on the injectivity radius, and an upper bound on the diameter. We use an 
$(\epsilon,\rho)$-approximation of the manifold by a weighted graph, as introduced by Burago et al. By adapting their methods, we prove that as the parameters $\epsilon, \rho$ and the ratio $\frac{\epsilon}{\rho}$ approach zero, the $k$-th eigenvalue of the graph Laplacian converges uniformly to the $k$-th eigenvalue of the manifold's Laplacian for each $k$. 

\end{abstract} \maketitle

\section{Introduction} 
\par The approximation of eigenvalues and eigenfunctions of the Laplacian on manifolds using graphs and simplices is a fundamental problem in spectral geometry and has extensive applications in theoretical and applied fields. Dodziuk, Patodi, and Petrunin introduced frameworks connecting Riemannian structures with triangulations and polyhedral approximations of manifolds, providing an early foundation for discrete Laplacian analysis in  \cite{MR488179}, \cite{MR407872}, \cite{MR1975337}. Later works by Fujiwara \cite{MR1257106} and Mantuano \cite{MR2130531} on spectral convergence on manifolds under general geometric conditions have provided much  of the conceptual basis for modern graph-based spectral approximation. A notable work by Belkin-Niyogi \cite{NIPS2006_5848ad95} established probabilistic convergence for random point samples. 
\par In the deterministic setting, a major advance made by Burago, Ivanov, and Kurylev \cite{article} established uniform spectral convergence of the graph Laplacian to the Laplace-Beltrami operator for closed Riemannian $n$-manifolds under uniform sectional curvature bounds, a lower injectivity radius bound, and an upper diameter bound. Aubry \cite{aubry2013approximation} obtained a similar result on approximating spectral data using isometrically immersed graphs in manifolds. Burago et al. \cite{MR3990939} and Lu \cite{MR4411102} also extended their framework to the $\rho$-Laplacian on metric measure spaces, including those with boundaries or certain singularities; further generalizations to vector bundles  appear in \cite{MR4437353}. Some recent related developments include \cite{MR4279237}, \cite{MR4384039}, \cite{e21010043}, \cite{MR4695859} and, \cite{MR4620352}. Applications of the spectral convergence of graph Laplacians also arise in manifold learning and numerical computation, see for instance [\cite{MR2504294},\cite{MR4130541},\cite{MR3636868},\cite{MR4273695}]. 

\par The methods of Burago et al. \cite{article} rely on estimates on the Jacobian of the exponential map derived from sectional curvature bounds. However, many fundamental results in Riemannian geometry, such as compactness theorems, eigenvalue bounds, and volume comparison, hold under the weaker assumption of Ricci curvature bounds. In this work, we establish a generalization of the spectral convergence theorem of \cite{article} to the broader class $\MM$ of closed $ n$-dimensional smooth Riemannian manifolds  satisfying the following conditions:
 $$|Ric|\leq \lambda, \quad diam \leq D, \quad {\rm and} \quad inj\geq i_0$$
 where $inj$ and $diam$ denote the injectivity radius and diameter of the manifold, respectively. Anderson proved that $\MM$ is precompact in the $C^{1,\alpha}$-topology via harmonic coordinates in \cite{anderson}, thereby extending Cheeger's finiteness theorem beyond its sectional curvature assumptions. The central contribution of our approach is to avoid dependencies on sectional curvature using estimates on the Jacobian of the exponential map derived from Anderson's precompactness theorem. Thus, adapting  the techniques in \cite{article} of approximating manifolds using weighted graphs dependent on $\epsilon, \rho>0$, we show that as $\epsilon,\rho $ and the ratio $\frac{\epsilon}{\rho}$ tend to 0, the $k$-th eigenvalue of the graph Laplacian converges uniformly to the $k$-th eigenvalue of the Laplace-Beltrami operator for every $k$.  

 \begin{definition}[\bf{$(\epsilon,\rho)$-approximation}] Given any $\rho>\epsilon>0$, a finite graph $\Gamma$ with the vertex set $ F=\{x_i\}_{i=1}^N$ and edges ${e_{ij}}$ is called an $(\epsilon,\rho)$-approximation of $(M,g)$ if the following conditions hold. 
\begin{itemize}
        \item $F\subset M$. There exists a partition of $M$ into measurable subsets $\{V_i\}_{i=1}^N$ such that $V_i\subset B_{\epsilon}(x_i)$ and $M=\bigcup B_{\epsilon}(x_i)$ where $B_{\epsilon}(x_{i})$ is the ball in $M$ centered at $x_{i}$ with radius $\epsilon$. 
        \item Two vertices $x_i$ and $x_j$ are connected by an edge $e_{ij}$ if the Riemannian distance between them is less than $\rho.$
        \item {\it Measure on $\Gamma$}:  Let $\mu_{i}=\text{vol}(V_{i}).$  $F$ can then be equipped with a discrete measure $\mu=\sum_{i=1}^{N}\mu_{i}\delta_{x_{i}}$ where $\delta_{x_i}$ denotes the Dirac measure at $x_i.$
         \item \textit{Weights on edges}: Let $\nu_{n}$ be the volume of the unit ball in the Euclidean $n$-space. To an edge $e_{ij}$, assign the weight 
    \begin{equation}
        w_{ij}=\frac{2(n+2)}{\nu_{n}\rho^{n+2}}\mu_{i}\mu_{j}.
    \end{equation}
        \end{itemize}
        \end{definition}
For any $\M \in\MM$, there exists a finite set of points $F$ that is $\epsilon$-dense in $\M$. We can then use a partition obtained by the Voronoi decomposition to define an $(\epsilon,\rho)$-approximation of $(M,g)$. The weighted graph is denoted by $\G(F,\rho,\mu).$ A weighted graph Laplacian  $\Delta_{\Gamma}$ on $L^{2}(\G)$ is defined as follows:
    \begin{align}
        (\Delta_{\Gamma}u)(x_{i})&=\frac{1}{\mu_{i}}\sum_{x_{i}\sim x_{j}}w_{ij}(u(x_{i})-u(x_{j}))\nonumber\\ 
        &= \frac{2(n+2)}{\nu_{n}\rho^{n+2}} \sum_{x_{i}\sim x_{j}}\mu_{j}(u(x_{i})-u(x_{j})).
    \end{align}
 where $x_i\sim x_j$ is $x_i$ and $x_j$ are connected by an edge.
 \par The motivation behind the choice of the normalization constant is given in Section 2.3 of \cite{article}. We note that $-\Delta_{\Gamma}$ is a non-negative self-adjoint operator on $L^2(\G)$ with eigenvalues $0\leq \lambda_1(\Gamma)\leq \lambda_2(\Gamma) ....\leq \lambda_N(\Gamma)$. The eigenvalues of $-\Delta_M$ are denoted by $0\leq \lambda_1(M)\leq \lambda_2(M)\leq \lambda_3(M)\leq ...$. The main goal of this paper is to prove the following theorem. 
\begin{theorem} \label{main 1}
Consider a manifold $M\in \MM$ and an $(\epsilon,\rho)$-approximation $\G$ of $M$ such that $\rho<i_0$. Let $\lambda_k(M)$, $\lambda_k(\G)$ denote the $k$-th eigenvalue of $-\Delta_M$ and $-\Delta_\Gamma$ respectively. Then  there exists a constant $C_{\MMM}>0$ such that for any $\rho, \frac{\epsilon}{\rho}<\frac{1}{C_{\MMM}}$,
\begin{equation} \label{indivisual bound}
     |\lk(\G)-\lk(M)|\leq C_{\MMM}\left(\frac{\epsilon}{\rho}+\rho \right)\lambda_k(M)+\rho C_{\MMM}\lambda^{\frac{3}{2}}_k(M), \quad \forall \ k\leq |\G|.\nonumber
   \end{equation}
\end{theorem}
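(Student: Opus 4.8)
The plan is to follow the variational scheme of Burago--Ivanov--Kurylev: compare the two spectra through the min-max principle by constructing a pair of linear transfer maps, a discretization map $\mathcal A\colon L^2(M)\to L^2(\G)$ (used to bound $\lk(\G)$ from above by $\lk(M)$) and a smoothing map $\mathcal B\colon L^2(\G)\to L^2(M)$ (for the reverse bound). For each map I would establish two estimates valid on the relevant $k$-dimensional spectral subspace: an \emph{almost-isometry} bound $\norm{\mathcal A u}^2_{L^2(\G)}\ge (1-\sigma)\norm{u}^2_{L^2(M)}$ and an \emph{energy-domination} bound $\mathcal E_\G(\mathcal A u)\le (1+\sigma)\,\mathcal E_M(u)+\tau(u)$, where $\mathcal E_M,\mathcal E_\G$ are the Dirichlet forms of $-\lapm$ and $-\lapg$, with $\sigma=c_\MMM(\tfrac{\epsilon}{\rho}+\rho)$ and $\tau(u)$ of size $c_\MMM\rho\,\lk(M)\norm{u}^2+c_\MMM\rho\,\lk^{3/2}(M)\norm{u}^2$ when $u$ lies in the span of the first $k$ eigenfunctions of $-\lapm$ (and the analogous pair for $\mathcal B$). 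Inserting the first $k$ eigenfunctions from one side into the Rayleigh quotient on the other side, dividing by the almost-isometry factors, and combining the two directions then produces the estimate of Theorem~\ref{main 1}.

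For the discretization I would take $\mathcal A$ to be cell-averaging, $(\mathcal A u)(x_i)=\mu_i^{-1}\int_{V_i}u\,d\mathrm{vol}$. The almost-isometry bound follows from the variance identity $\int_{V_i}|u|^2-\mu_i|(\mathcal A u)(x_i)|^2=\int_{V_i}|u-(\mathcal A u)(x_i)|^2$ together with a Poincar\'e inequality on the $\epsilon$-cells whose constant is uniform over $\MMM$ (by comparison with the Euclidean ball via Anderson's harmonic coordinates, or by Buser's inequality), giving $\norm{\mathcal A u}^2_{L^2(\G)}\ge\norm{u}^2_{L^2(M)}-c_\MMM\epsilon^2\norm{\nabla u}^2_{L^2(M)}$. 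The energy bound is the real content. Discarding the averaging costs only $O(\epsilon/\rho)$ once the weight $\rho^{-n-2}$ is taken into account, so one is reduced to comparing $\mathcal E_\G(\mathcal A u)$ with $\int_M|\nabla u|^2$, the normalization $\tfrac{2(n+2)}{\nu_n\rho^{n+2}}$ having been chosen (Section~2.3 of \cite{article}) so that these agree to leading order in view of the Euclidean moment identity $\int_{|y|<\rho}\langle v,y\rangle^2\,dy=\tfrac{\nu_n}{n+2}\rho^{n+2}|v|^2$. Writing $u(x_j)-u(x_i)=\langle\nabla u(x_i),\exp_{x_i}^{-1}(x_j)\rangle+R_{ij}$ with $|R_{ij}|\le c\,\mathrm{dist}(x_i,x_j)^2\sup_{B_\rho(x_i)}|\mathrm{Hess}\,u|$, passing to normal coordinates at $x_i$, and using the neighbour-volume bound $\sum_{x_j\sim x_i}\mu_j\le c_\MMM\rho^n$ (Bishop--Gromov under the Ricci lower bound), the leading term reproduces $\sum_i\mu_i|\nabla u(x_i)|^2\approx\int_M|\nabla u|^2$ up to a factor $1+c_\MMM(\tfrac{\epsilon}{\rho}+\rho)$, the cubic cross term integrates to zero by the antipodal symmetry of the ball, and the surviving remainder is bounded, after passing from pointwise to $L^2$ norms using the $\epsilon$-density of $F$, by $c_\MMM\rho\,\norm{\nabla u}_{L^2}\norm{\mathrm{Hess}\,u}_{L^2}$ plus strictly lower-order terms. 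For an eigenfunction the Bochner formula on the closed manifold $M$ gives $\norm{\mathrm{Hess}\,u}^2_{L^2}=\lk^2(M)\norm{u}^2-\int_M\mathrm{Ric}(\nabla u,\nabla u)\le \lk(M)\big(\lk(M)+\lambda\big)\norm{u}^2$ (using the lower Ricci bound), so $c_\MMM\rho\,\norm{\nabla u}_{L^2}\norm{\mathrm{Hess}\,u}_{L^2}\le c_\MMM\rho\,\lk^{3/2}(M)\norm{u}^2+c_\MMM\rho\,\lk(M)\norm{u}^2$ --- exactly the two error terms in the statement; the same computation runs verbatim on any element of the first $k$-dimensional eigenspace.

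For the reverse inequality I would build $\mathcal B$ by extending a graph function to its piecewise-constant representative $\sum_i f(x_i)\mathbf 1_{V_i}$ and regularizing it at scale $\rho$ (convolution with a fixed bump in normal coordinates, or the heat flow at time $\sim\rho^2$), so that $\nabla(\mathcal B f)$ is controlled cellwise by the differences $f(x_i)-f(x_j)$ across neighbouring cells --- precisely the data weighted in $\mathcal E_\G$. Repeating the local analysis above yields $\mathcal E_M(\mathcal B f)\le(1+c_\MMM(\tfrac{\epsilon}{\rho}+\rho))\mathcal E_\G(f)+(\text{errors})$ and $\norm{\mathcal B f}^2_{L^2(M)}\ge(1-c_\MMM(\tfrac{\epsilon}{\rho}+\rho))\norm{f}^2_{L^2(\G)}$, with the error terms brought to the same form after a discrete Bochner-type estimate for $-\lapg$-eigenfunctions together with the one-sided bound on $\lk(\G)$ from the first step. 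Feeding these into the min-max principle and taking $C_\MMM$ to be the largest of the constants so produced gives $|\lk(\G)-\lk(M)|\le C_\MMM(\tfrac{\epsilon}{\rho}+\rho)\lk(M)+C_\MMM\rho\,\lk^{3/2}(M)$ whenever $\rho,\tfrac{\epsilon}{\rho}<\tfrac{1}{C_\MMM}$, which is Theorem~\ref{main 1}.

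The step I expect to be the main obstacle is the replacement of the Rauch/Bishop \emph{sectional}-curvature comparison used in \cite{article} by a Jacobian estimate for $\exp_p$ that holds under the Ricci hypotheses defining $\MMM$ alone: a two-sided pointwise bound on $J(\exp_p)$ does not follow from $|\mathrm{Ric}|\le\lambda$, which controls only its trace. The remedy is Anderson's theorem --- $|\mathrm{Ric}|\le\lambda$ together with $\mathrm{inj}\ge i_0$ and $\mathrm{diam}\le D$ makes $\MMM$ precompact in $C^{1,\alpha}$ in harmonic coordinates --- so that on a ball of radius $r$ up to a threshold $r_\MMM$ the metric is $C^{1,\alpha}$-close to the Euclidean one; this furnishes the uniform estimates $J(\exp_p)=1+O(r)$, $\mathrm{dist}_g=(1+O(r))\,\mathrm{dist}_{\mathrm{eucl}}$ and $\mathrm{vol}\,B_r(p)=(1+O(r))\,\nu_n r^n$ that drive every local step above and are the source of each factor $\lbtheta$ and of every power of $\rho$ in the final bound; it is here, rather than in the Bochner step, that the \emph{upper} Ricci bound is used. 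Making these estimates quantitative and uniform over $\MMM$, and checking that the errors they generate are of exactly the advertised order rather than worse, is where essentially all of the work beyond \cite{article} lies.
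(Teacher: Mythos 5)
Your high-level strategy matches the paper exactly: compare spectra by min--max, using cell-averaging $P$ for the upper bound on $\lambda_k(\G)$ and a smoothing/interpolation map $\Lambda_{\rho-2\epsilon}P^*$ for the lower bound, and replace the Rauch/Bishop Jacobian estimates of Burago--Ivanov--Kurylev by the $O(r)$ Jacobian bound that follows from Anderson's $C^{1,\alpha}$-precompactness (Lemma~\ref{volformbound} in the paper). You have also correctly located the main obstacle: under $|\mathrm{Ric}|\le\lambda$ alone there is no two-sided pointwise Jacobian comparison, and Anderson's theorem is precisely the remedy. However, your route to the Dirichlet-energy comparison and to the $\lambda_k^{3/2}$ error term is genuinely different from the paper's and contains two gaps. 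The paper never Taylor-expands $u(x_j)-u(x_i)$ and never touches the Hessian or the Bochner formula; it instead works through the average dispersion $E_r(f)=\int_M\int_{B_r(x)}|f(y)-f(x)|^2\,dy\,dx$, proving $E_r(f)\le\tfrac{\nu_n r^{n+2}}{n+2}(1+c_\MMM r)\|df\|^2$ directly from the Jacobian bound (Lemma~\ref{theo1}) and then $\|\delta(Pf)\|^2\le(1+c_\MMM(\rho+2\epsilon))(1+\tfrac{2\epsilon}{\rho})^{n+2}\|df\|^2$ by enlarging $V_i\times V_j$ into $M\times B_{\rho+2\epsilon}(x)$ (Lemma~\ref{delp}). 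The $\lambda_k^{3/2}$ term then arises \emph{only} from the norm distortion of the transfer map: the Poincar\'e inequality on cells gives $\|Pf\|\ge(1-3\epsilon\sqrt{\tfrac{n\nu_n}{c_n}\lambda_k})\|f\|$, and expanding the resulting denominator of the Rayleigh quotient produces $\epsilon\sqrt{\lambda_k}\cdot\lambda_k=\epsilon\lambda_k^{3/2}\le\rho\lambda_k^{3/2}$. There is no Hessian anywhere, and the lower Ricci bound is used only for Bishop--Gromov volume lower bounds and inside Anderson's theorem, not through Bochner.

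Two concrete weaknesses in your route deserve flagging. First, your remainder estimate uses $|R_{ij}|\le c\,\mathrm{dist}(x_i,x_j)^2\sup_{B_\rho(x_i)}|\mathrm{Hess}\,u|$, but the Bochner formula only controls $\|\mathrm{Hess}\,u\|_{L^2}$; converting the pointwise supremum into an $L^2$ quantity after summing $\sum_i\mu_i(\cdots)$ requires a local $L^\infty$-to-$L^2$ elliptic estimate for $\mathrm{Hess}\,u$ uniform over $\MMM$, an extra layer of regularity theory you neither state nor actually need (the paper's $E_r$-based argument, or an integral-form remainder $u(x_j)-u(x_i)=\int_0^1\langle\nabla u(\gamma(t)),\dot\gamma(t)\rangle\,dt$, avoids the supremum entirely). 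Second, your reverse direction invokes a ``discrete Bochner-type estimate for $-\lapg$-eigenfunctions'', which does not exist in the paper and is far from standard on a weighted $(\epsilon,\rho)$-graph; the paper instead estimates $\|d(\Lambda_r P^*u)\|$ directly from the explicit kernel $k_r$, its gradient, and the Jacobian bound (Lemmas~\ref{gradalpha} and~\ref{d of lambda r}), again without any Bochner identity. So while your proposal could probably be completed, it would be harder than the paper's Hessian-free route and as written it relies on two estimates that need substantial additional justification.
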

In \cite{cheng}, Cheng proved that if the Ricci curvature of $M$ is bounded below by $-\lambda$ and the diameter is bounded above by $D$, then the $k$-th eigenvalue of $-\Delta_M$  is bounded above by a constant depending only on $k$, $n$, $\lambda$ and $D$. Hence, for any $M\in \MM$, there exists a uniform positive constant $\LL$ such that $\lambda_k\leq \LL.$ Consequently, we have the following corollary.
\begin{corollary}\label{cor1} Consider a manifold $M\in \MM$ and an $(\epsilon,\rho)$-approximation $\G$ of $M$ such that $\rho<i_0$. Let $\lambda_k(M)$, $\lambda_k(\G)$ denote the $k$-th eigenvalue of $-\Delta_M$ and $-\Delta_\Gamma$ respectively. Then for each $k$, there exists a constant $C_{\MMM,k}>0$ depending on $\MMM$ and $k$  such that for any $\rho, \frac{\epsilon}{\rho}<\frac{1}{C_{\MMM,k}}$,
\begin{equation} 
     |\lk(\G)-\lk(M)|\leq C_{\MMM,k}\left(\frac{\epsilon}{\rho}+\rho \right).\nonumber
   \end{equation}
Consequently, for every fixed $k$, $\lambda_k(\Gamma)$ converges uniformly to $\lambda_k(M)$ for all  $M\in \MM$ as  $\rho\to 0$ and $\frac{\epsilon}{\rho}\to 0.$    
 \end{corollary}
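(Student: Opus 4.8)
Corollary \ref{cor1} follows from Theorem \ref{main 1} with essentially no extra work: by Cheng's eigenvalue comparison \cite{cheng} there is a constant $\LL$ depending only on $n,\lambda,D$ and $k$ with $\lk(M)\le\LL$ for every $M\in\MM$, so substituting into \eqref{indivisual bound} and putting $\CC:=C_\MMM(\LL+\LL^{3/2})$ gives $|\lk(\G)-\lk(M)|\le\CC(\tfrac{\epsilon}{\rho}+\rho)$ for all $M\in\MM$ once $\rho$ is small; since $\CC$ does not depend on the individual manifold, the convergence as $\rho\to0$ and $\epsilon/\rho\to0$ is uniform over the class. Thus the content is entirely in Theorem \ref{main 1}, and that is what I would actually prove.

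\emph{Strategy.} I would argue through the Courant--Fischer min--max formula for $\lk$, following the architecture of \cite{article}: to prove $\lk(\G)\le(1+\mathrm{err})\lk(M)+\mathrm{err}$ and the reverse inequality, construct a \emph{discretisation} $\Phi\colon L^2(M)\to L^2(\G)$ (for instance $\Phi f(x_i)$ equal to the average of $f$ over the Voronoi cell $V_i$) and a \emph{smoothing} $\Psi\colon L^2(\G)\to H^1(M)$ (mollify at scale $\rho$ the piecewise-constant extension of $u$ that equals $u(x_i)$ on $V_i$), and show that on the span of the first $k$ eigenfunctions, respectively eigenvectors, each map is an approximate $L^2$-isometry while distorting the Dirichlet energy by at most a factor $1+C_\MMM(\tfrac{\epsilon}{\rho}+\rho)$ plus an additive error of order $\rho C_\MMM\lk^{3/2}$. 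Two facts drive the energy comparison. First, the normalisation: the Euclidean identity $\frac{n+2}{\nu_n\rho^{n+2}}\int_{B_\rho(0)}\langle v,\xi\rangle^2\,d\xi=|v|^2$, together with the factor $2$ accounting for ordered versus unordered pairs, makes the graph energy $\frac{2(n+2)}{\nu_n\rho^{n+2}}\sum_{i\sim j}\mu_i\mu_j(f(x_i)-f(x_j))^2$ a discretisation of $\frac{n+2}{\nu_n\rho^{n+2}}\iint_{d(x,y)<\rho}(f(x)-f(y))^2\,dx\,dy$. Second, a first-order Taylor expansion $f(y)-f(x)=\langle\nabla f(x),\exp_x^{-1}(y)\rangle+O(d(x,y)^2|\nabla^2f|)$ along minimising geodesics — legitimate since $\rho<i_0$ — turns this double integral into $(1+O(\rho))\int_M|\nabla f|^2$ with a remainder controlled by $\rho\|\nabla f\|_{L^2}\|\nabla^2f\|_{L^2}$; the elliptic bounds $\|\nabla^2f\|_{L^2}\le C_\MMM\lk\|f\|_{L^2}$ and $\|\nabla f\|_{L^2}^2\le\lk\|f\|_{L^2}^2$ on the eigenspace turn this into the $\rho\lk^{3/2}$ term, while the mismatch between the cell scale $\epsilon$ and the interaction scale $\rho$ produces the $\epsilon/\rho$ term.

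\emph{The crux.} All of the above rests on quantitative control of how geodesic balls of radius $r\le\rho$ deviate from Euclidean balls — precisely, a two-sided bound of the form $(1-c_\MMM r)\le J_x(\exp_x)(r\xi)\le C_\MMM$ for the Jacobian of the exponential map, uniform over $M\in\MM$, $x\in M$, $|\xi|=1$ and $r<i_0$, with $c_\MMM,C_\MMM$ depending only on $n,\lambda,D,i_0$. Under sectional curvature bounds this is immediate from Rauch comparison, and is even of the sharper form $J=1+O(r^2)$; under Ricci bounds alone there is no such pointwise comparison. The plan is to extract it from Anderson's precompactness of $\MM$ in the $C^{1,\alpha}$ topology via harmonic coordinates \cite{anderson}: harmonic-coordinate bounds give uniform $C^{1,\alpha}$ control of the metric coefficients, hence uniform control of the Christoffel symbols and, through the geodesic equation, of the metric in geodesic normal coordinates on balls of radius $<i_0$ — which yields the stated linear-in-$r$ estimate for $J$ (alternatively, a direct compactness/contradiction argument gives the same). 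It is this weaker, merely linear-in-$r$ control — as opposed to the quadratic control available under sectional bounds — that forces the errors in \eqref{indivisual bound} to be of size $\rho$ rather than $\rho^2$. I expect this Jacobian estimate to be the main obstacle; once it is in place, the rest is a careful but routine propagation of error terms through the ball comparisons, the two approximate isometries, and the min--max principle, with the hypothesis $\rho,\ \epsilon/\rho<1/C_\MMM$ entering only to keep the multiplicative factor $1+C_\MMM(\tfrac{\epsilon}{\rho}+\rho)$ controlled.
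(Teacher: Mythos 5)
Your first paragraph is precisely the paper's proof of Corollary \ref{cor1}: invoke Cheng's eigenvalue comparison to obtain a uniform constant $\LL$ with $\lambda_k(M)\le\LL$ for all $M\in\mathcal{M}$, substitute into the bound of Theorem \ref{main 1}, and absorb $\LL$ and $\LL^{3/2}$ into the constant $C_{\mathcal{M},k}$. The rest of your text is a sketch of Theorem \ref{main 1}, which is a different statement; it follows the paper's architecture (discretization $P$, interpolation $I=\Lambda_{\rho-2\epsilon}P^*$, min--max, Jacobian control via Anderson's $C^{1,\alpha}$-precompactness), but one point is off: in the paper the $\rho\lambda_k^{3/2}$ error does not arise from a second-order Taylor remainder controlled by a Hessian $L^2$ bound $\|\nabla^2 f\|_{L^2}\lesssim\lambda_k\|f\|_{L^2}$ --- the paper never estimates $\nabla^2 f$ at all. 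It comes instead from the $L^2$-norm distortion of $P$ and $I$: Corollary \ref{cor2} and Lemma \ref{interpolation}(1) give $\big|\|Pf\|-\|f\|\big|\lesssim\epsilon\sqrt{\lambda_k}\,\|f\|$ and $\big|\|Iu\|-\|u\|\big|\lesssim\rho\sqrt{\lambda_k}\,\|u\|$ on the spans of the first $k$ eigenvectors, and expanding the resulting denominator in the Rayleigh quotient is what yields the additive $\rho\lambda_k^{3/2}$ term, while the Dirichlet-energy comparison contributes only the multiplicative factor $1+C(\epsilon/\rho+\rho)$.
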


We now state the approximation of eigenfunctions of the Laplace-Beltrami operator by using the discretization scheme introduced above. We work with the discretization map  $P$ (Definition \ref{discretization defn}) that connects functions on the manifold with functions on the discrete space and the \emph{interpolation map} $I$ (Definition \ref{interpolation defn}) which reconstructs a function on the manifold from discrete data.  

\begin{theorem}
      Let $\lambda=\lambda_j(M)$ be an eigenvalue of $\Delta_M$ with multiplicity $m$, such that
$$ \lambda_{k-1} < \lambda_k = \lambda = \lambda_{k+m-1} < \lambda_{k+m}. $$
 Let $\delta_\lambda = \min\{1, \lambda_k - \lambda_{k-1}, \lambda_{k+m} - \lambda_{k+m-1}\}$.
Let $u_k, \dots, u_{k+m-1}$ be orthonormal eigenvectors  corresponding to  $\lambda_k(\Gamma), \dots, \lambda_{k+m-1}(\Gamma)$.

Then there exist orthonormal eigenfunctions $g_k, \dots, g_{k+m-1}$ of $-\Delta_M$ corresponding to $\lambda$ such that for all $j = k, \dots, k+m-1$ and $ \rho<\delta_\lambda C_{\mathcal{M}, k}^{-1}$,
\begin{equation}
\norm{u_j - P g_j}^2 \leq C_{\mathcal{M}, k} \delta_\lambda^{-2} \left(\frac{\epsilon}{\rho}+\rho \right)
\end{equation}
and
\begin{equation}
\norm{g_j - I u_j}^2 \leq C_{\mathcal{M}, k} \delta_\lambda^{-2}\left(\frac{\epsilon}{\rho}+\rho \right).
\end{equation}
\end{theorem}

\subsection*{Idea of the proof:} By Rauch's comparison theorem, if the absolute value of the sectional curvature is bounded by $K$, the Jacobian $J_x(v)$ of the exponential map at $x$ in the direction of $v$ satisfies
$$\frac{1}{1+CnK|v|^2}\leq J_x(v)\leq 1+CnK|v|^2 , \quad \forall \ |v|< i_0.$$ 
This inequality is a crucial component of the eigenvalue and eigenfunction approximation methods in  \cite{article}.  However, this type of bound can not be easily obtained for bounded Ricci curvature. Although the authors in \cite{article} noted the possibility of extending their methods to manifolds with Ricci curvature bounds, the generalization is not straightforward. We use Anderson's precompactness theorem in \cite{anderson} and derive appropriate estimates for the Jacobian in Lemma \ref{volformbound} that allow us to adopt the methods in \cite{article} in the case of bounded Ricci curvature. 
\par We prove Theorem \ref{main 1} using the Min–Max principle. In Section 3, a discretization map from $L^2(M)$ to $L^2(\G)$ is used to relate the discrete Dirichlet energy to the energy functional on $M$. Then by controlling the Rayleigh quotient on the image under the discretization map of the span of the first $k$ eigenfunctions of $-\Delta_M$, these estimates yield an upper bound for $(\lk(\G)-\lk(M))$.  
\par The lower bound for $(\lk(\G)-\lk(M))$ is obtained in Section 4. Using an interpolation map, we regularize elements of $L^2(\G)$ into $C^{0,1}(M)$. This map is an approximate inverse of the discretization map. We then establish relations between the derivative of the interpolation map and the discrete Dirichlet energy. Considering the Rayleigh quotient restricted to the image of the interpolation map of the span of the first $k$ eigenfunctions of $-\Delta_\G$, and using our interpolation estimates, we obtain the desired lower bound.
\section{Some inequalities and Average dispersion}

Given a Riemannian manifold $(M,g)$ and a point $x\in M$, the exponential map at $x$ is denoted by $\exp_x$. Let $u$ be a unit vector in $T_xM.$ For $0<t< i_0$, using the normal polar coordinates $(u,t)$ in $T_xM$, we can write the volume element $\text{{vol}}_g$ as
\begin{equation} \label{jacpull}
    \exp_x^*\operatorname{vol}_g=J(u,t)t^{n-1}dtdu. \nonumber
\end{equation}
where $J(u,t)$ is called the Jacobian determinant at $(u,t)$. In the sequel, we require suitable bounds $\lvert J(u,t) \rvert$ under the assumed geometric conditions. We first establish an estimate for the Laplacian of the distance function under an upper Ricci curvature bound.

\begin{lemma}[Laplacian comparison of the distance function]
\label{lem:laplace comparison}
Let $(M,g)$ be a Riemannian n-manifold satisfying  $\operatorname{Ric} \le \lambda $, and $\operatorname{inj}(M)\ge i_0>0$. Fix $p\in M$ and let $r(x)=d(p,x)$ be the distance function from $p$.
Then at $p\in B_{i_0/2}(x)\setminus\{x\}$,
\[
\Delta r (p)\ge \frac{n-1}{r(p)} - \lambda\, r(p).
\]
\end{lemma}

\begin{proof}
Let $\gamma:[0,r(p)]\to M$ be the unique unit-speed minimizing geodesic from $x$ to $p$. One can write in polar coordinates
\[g=dr^2+g_r.\]

Let $H_r$ denote the Hessian of the distance function $r$.   Then the fundamental equation (see \cite{petersen} Chapter 2, section 5.2) can be written as  \[\partial_rg=\partial_rg_r=2H_r\] and 

\[
\partial_rH_r -H_r^2 =- \operatorname R(.,\partial_r,\partial_r,.).
\]
By a taking trace and using the curvature assumption $\operatorname{Ric}\le \lambda$,
we have 
\begin{equation}
  \label{eq:fundamental}
\partial_r\Delta r \geq
\operatorname{tr}(H_r^2)-\lambda\geq \frac{(\Delta r)^2}{n-1}
 -\lambda.  
\end{equation}
The last inequality follows from the Cauchy-Schwarz inequality. Therefore,
\begin{align}
   \partial_r \left(\Delta_r-\frac{n-1}{r}\right)&=\partial_r\Delta r +\frac{n-1}{r^2} -2\frac{\Delta r}{r} \nonumber \\ &\geq \frac{1}{n-1}(\Delta r)^2 -\lambda + \frac{n-1}{r^2} -2\frac{\Delta r}{r} \label{fundamental equation bound} \\
   &=\frac{1}{n-1}\left(\Delta r - \frac{n-1}{r}\right)^2 -\lambda\nonumber\\ 
   &\geq -\lambda.
\end{align}
 Let $f(r)=\Delta r-\frac{n-1}{r}.$ Then,
\begin{align}
    f(r)-f(\epsilon)&=\int_\epsilon^rf'(s)ds \nonumber\\
    &\geq -\int_{\epsilon}^r\lambda ds\nonumber\\
    &=-\lambda(r-\epsilon).
\end{align}
From \cite{petersen} Chapter 5, section 7
$$\lim_{r\to 0}\left( H_r-\frac{1}{r}g_r\right)=0.$$
Taking the trace we have,
\[\lim_{\epsilon\rightarrow 0}\left(\Delta r(\gamma(\epsilon))-\frac{n-1}{\epsilon}\right)=0.\] Hence,
\[f(r)\geq -\lambda r.\]
Which implies, 
\[\Delta r(p)\geq \frac{n-1}{r(p)}-\lambda r(p).\]

\end{proof}

\begin{lemma} \label{volformbound}  
 There exists a constant $c_\MMM>0$ such that for any $M\in \MM$ and $x\in M$, 
 $$|1-J(u,t)|\leq c_\MMM t ,\quad \forall \ u\in S^{n-1}  , \quad \forall \ t  \leq  \frac{i_0}{2}.$$
 \end{lemma}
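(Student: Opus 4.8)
The plan is to deduce the bound $|1-J(u,t)|\le c_{\MMM}t$ from Anderson's $C^{1,\alpha}$-precompactness of $\MMM$, using the fact that in harmonic coordinates the metric coefficients $g_{ij}$ and their first derivatives are uniformly controlled. First I would recall that by Anderson's theorem there exist uniform constants $r_0>0$, $\alpha\in(0,1)$ and $Q>1$, depending only on $n,\lambda,D,i_0$, such that every $M\in\MMM$ admits harmonic coordinate charts on balls of radius $r_0$ in which $Q^{-1}\delta_{ij}\le g_{ij}\le Q\delta_{ij}$ and the $C^{0,\alpha}$-norms of the $g_{ij}$ and the $C^{\alpha}$ norms of $\partial_k g_{ij}$ are bounded by $Q$; in particular $g_{ij}$ is uniformly Lipschitz with a constant depending only on $\MMM$. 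Since $i_0$ is bounded below uniformly, after possibly shrinking $r_0$ I may assume $r_0<i_0/2$, so that for every $t\le i_0/2$ the geodesic $\exp_x(tu)$ can be covered by a bounded number (depending only on $D/r_0$, hence only on $\MMM$) of such harmonic charts.

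The key analytic step is to compare $J(u,t)$, which is the density of $\exp_x^*\mathrm{vol}_g$ in geodesic polar coordinates, with the constant function $1$. I would express $J(u,t)\,t^{n-1}=\sqrt{\det(\tilde g_{ab}(u,t))}$, where $\tilde g_{ab}$ are the components of the metric in geodesic polar coordinates $(t,u)$ centered at $x$, and use the well-known fact (from the proof of Bishop–Gromov) that $J(u,0)=1$ and $\partial_t\big|_{t=0}\big(J(u,t)t^{n-1}/t^{n-1}\big)=0$, i.e. $J$ is smooth in $t$ with $J(u,0)=1$. The derivative $\partial_t\log J(u,t)=m(u,t)-\frac{n-1}{t}$, where $m(u,t)$ is the mean curvature of the geodesic sphere along the radial geodesic; equivalently, writing $r=d(x,\cdot)$, one has $\partial_t\log(J t^{n-1})=\Delta r$ along the geodesic. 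The plan is to bound $|\partial_t\log J(u,t)|$ by a constant depending only on $\MMM$ for all $t\le i_0/2$; then integrating from $0$ gives $|\log J(u,t)|\le c_{\MMM}t$, and exponentiating yields $|1-J(u,t)|\le c_{\MMM}'t$ on the (bounded) range of $t$, after adjusting constants.

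To get the uniform bound on $\partial_t\log J=\Delta r - \frac{n-1}{t}+\frac{n-1}{t}$... more precisely on the full radial logarithmic derivative, I would switch to the harmonic coordinates described above: there $r(\cdot)=d(x,\cdot)$ solves the eikonal equation $g^{ij}\partial_i r\,\partial_j r=1$, and $J(u,t)t^{n-1}$ is, up to the fixed Jacobian factor of the harmonic chart, the pullback density, so that $\partial_t\log(Jt^{n-1})=\Delta r=\frac{1}{\sqrt{g}}\partial_i(\sqrt{g}\,g^{ij}\partial_j r)$. Because $g_{ij}\in C^{1,\alpha}$ with uniform bounds and is uniformly elliptic, elliptic regularity for the eikonal/distance function gives a uniform $C^{1,\alpha}$ bound on $r$ away from $x$ and the cut locus, hence a uniform bound $|\Delta r|\le C(\MMM)/t$ for $t$ bounded away from $0$ (the $1/t$ blow-up near $x$ is exactly compensated by the $-\frac{n-1}{t}$ term, as in the Euclidean model). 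Combining the near-$0$ expansion $J(u,t)=1+O(t)$ — which follows from $J(u,0)=1$ and the fact that the first derivative of $J$ at $0$ is controlled by $|\mathrm{Ric}|\le\lambda$ via the Riccati/Jacobi equation (indeed $J(u,t)=1-\tfrac{1}{6}\mathrm{Ric}(u,u)t^2+\dots$, which is even $O(t^2)$) — with the uniform bound on $\partial_t\log J$ on $[\delta i_0,\ i_0/2]$, one obtains the claimed linear estimate on all of $(0,i_0/2]$ with a constant $c_{\MMM}$ depending only on $n,\lambda,D,i_0$.

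I expect the main obstacle to be making the ``uniform $C^{1,\alpha}$ control of the distance function'' step rigorous without circularity: the distance function is only Lipschitz globally and smooth on $M\setminus(\{x\}\cup\mathrm{Cut}(x))$, so one must either work on the star-shaped domain $\exp_x(\{|v|<i_0\})$ where $r$ is smooth and invoke the harmonic-coordinate bounds there, or — more robustly — argue directly with the Jacobi equation $J''=-(\|A\|^2+\mathrm{Ric}(\gamma',\gamma'))J$ along radial geodesics (where $A$ is the second fundamental form of the geodesic spheres), using $|\mathrm{Ric}|\le\lambda$ together with a uniform upper bound on $\|A\|$ that itself comes from Anderson's theorem (the $C^{1,\alpha}$ metric bound controls the Christoffel symbols in $L^\infty$, hence the second fundamental form of distance spheres of radius $\ge\delta i_0$). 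Reconciling the $t\to 0$ behavior, where $\|A\|\sim (n-1)/t$ blows up, with the large-$t$ regime is the delicate point; the cleanest route is probably to prove $|\log(Jt^{n-1})-\log t^{n-1}|=|\log J|$ is Lipschitz in $t$ by estimating $u(t):=\partial_t\log(Jt^{n-1})-\frac{n-1}{t}$, which satisfies a Riccati inequality with bounded right-hand side and initial value $0$, giving $|u(t)|\le C(\MMM)$ and hence $|\log J(u,t)|=|\int_0^t u|\le C(\MMM)t$ directly.
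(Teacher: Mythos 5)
Your high-level ingredient --- Anderson's $C^{1,\alpha}$-precompactness --- is the same one the paper uses, but the route you take to a Lipschitz estimate is much more involved than necessary, and two of the proposed steps do not hold as stated. The observation you miss, which makes the paper's proof essentially one line, is that in geodesic normal \emph{Cartesian} coordinates centered at $x$ the Jacobian $J(u,t)$ is exactly the metric-determinant function $\tilde{J}_g(p)=\sqrt{\det(g_{ij}(p))}$ evaluated at $p=\exp_x(tu)$, with $\tilde{J}_g(x)=1$ since $g_{ij}(x)=\delta_{ij}$ in those coordinates. (Passing to polar coordinates $(t,u)$, as you do, is precisely what introduces the spurious $t^{n-1}$ factor and the apparent singularity at $t=0$.) Once one has a uniform $L^\infty$ bound on $\partial_k g_{ij}$ near $x$ --- which is where Anderson's theorem enters --- the lemma is immediate from the mean value theorem: $|J(u,t)-1|=|\tilde{J}_g(\exp_x(tu))-\tilde{J}_g(x)|\le c_\MMM\, d(x,p)=c_\MMM t$. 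No logarithmic derivative of $J$, no analysis of $\Delta r$, no ODE comparison and no matching near $t=0$ are needed.

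As for the gaps: the step ``elliptic regularity for the eikonal/distance function gives a uniform $C^{1,\alpha}$ bound on $r$, hence $|\Delta r|\le C/t$'' cannot be run; the eikonal equation $g^{ij}\partial_i r\,\partial_j r=1$ is first-order and fully nonlinear, not elliptic, and gives no a priori control on second derivatives of its solutions. The Riccati route is also more delicate than the sketch suggests: tracing the matrix Riccati equation for the shape operator $A$ of geodesic spheres gives $H'=-\mathrm{tr}(A^2)-\mathrm{Ric}(\gamma',\gamma')$, and the inequality $\mathrm{tr}(A^2)\ge H^2/(n-1)$ yields only a one-sided bound on $u(t)=\partial_t\log J$ --- this is Bishop--Gromov, which under $\mathrm{Ric}\ge-\lambda$ bounds $J$ from above but not from below. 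A lower bound on $J$ requires an \emph{upper} bound on $\mathrm{tr}(A^2)$, which is not a consequence of a two-sided Ricci bound; you propose to extract it from Anderson's theorem, but at that point one is again reduced to establishing a uniform $C^1$ bound on the metric near $x$, and the direct mean-value argument on $\sqrt{\det g_{ij}}$ in normal coordinates is both simpler and avoids the circularity you yourself flag at the end.
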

    \begin{proof}
   Consider a polar coordinate decomposition on $B_r(x)$, the geodesic ball of radius $r\le \frac{i_0}{2}$ centered at $x$ in $(M,g)\in\MMM.$ Then, from \cite{petersen}, Chapter 9, section 1 we have
   \[d\operatorname{vol}=\Tilde{J}(u,t)du\wedge d\operatorname{vol}_{n-1}\]
   and \begin{equation}
       \label{eq:jacobian relation with laplacian}
       \partial_r\Tilde{J}=\Tilde{J}\Delta r.
   \end{equation}
   where $\Tilde{J}(u,t)=J(u,t)t^{n-1}.$ By differentiating (\ref{eq:jacobian relation with laplacian}), we obtain
   
    \begin{equation}
    \label{eq:laplace r and jacobian relation}
        \Delta r=\frac{n-1}{r}+\frac{J'(u,r)}{J(u,r)}.
    \end{equation}
From Lemma \ref{lem:laplace comparison} and the Laplace comparison theorem (see \cite{petersen}, Chapter 9, section 1.2) for $\operatorname{Ric}\geq -\lambda$, we have
\[\frac{n-1}{t}-\lambda t\leq \Delta r(\gamma(t))\leq (n-1)\sqrt{\lambda}\coth(\sqrt{\lambda}t).\]
  Using (\ref{eq:laplace r and jacobian relation}),
  \[-\lambda tJ(u,t)\leq J'(u,t)\leq (n-1)\sqrt{\lambda}\coth(\sqrt{\lambda}t)J(u,t)-\frac{n-1}{t}J(u,t).\]
  From the Taylor series expansion of the $\coth$ function, we have $\lim_{t\rightarrow 0}\left(\sqrt{\lambda}\coth(\sqrt{\lambda} t)-\frac{1}{t}\right)=0$ and $J(u,t)\leq \left(\frac{\sinh (\sqrt{\lambda}t}{\sqrt{\lambda}t}\right)^{n-1}.$ Using $|J(u,t)|\leq C(n,\lambda,i_0)$, we obtain a uniform upper bound $c_{\MM}$ on the derivative of $J(u,t)$ on $B_x(r)$ for $r\le \frac{i_0}{2}.$ \begin{equation}
      \label{eq:derivative bound}
      |J'(u,t)|\leq c_{\MMM}, \quad \forall p=\exp_x(tu)\in B_{\frac{i_0}{2}}(x),\quad \forall M\in\MMM .
  \end{equation}
 
   From the proof of the Bishop-Gromov volume comparison theorem, we know that $J(u,0)=1$ for any $u\in S^{n-1}.$  Using mean value theorem and (\ref{eq:derivative bound})  we have 
       \begin{equation}
          \nonumber
           |J(u,t)-1|=|J(u,t)-J(u,0)|\leq c_\MMM  t. 
       \end{equation}
    
\end{proof}
\begin{definition} Let $\M\in \MM$ and $B_{r}(x)$  be a ball in $\M$ centered at $x$ of radius $r$. Let $f:\M\to \mathbb{R}$ be a smooth function. The average dispersion $E_{r}(f)$ is defined as 
\begin{equation*}
  E_{r}(f)=\int_{\M}\int_{B_{r}(x)}|f(y)-f(x)|^{2}dy dx . 
\end{equation*}
\end{definition}
We recall that $H^1(M)$ is the space of all functions in $L^2(M)$ such that their first-order partial derivatives are also in $L^2(M)$. The following lemma gives an upper bound of $E_r(f)$.
\begin{lemma}\label{theo1} For $M\in \MM$, let $0<r<\frac{i_0}{2}, f\in H^{1}(\M) $. Then
\begin{equation}
    E_{r}(f)\leq \frac{\nu_{n}r^{n+2}}{n+2}(1+c_\MMM r) \norm{df}^2. \nonumber
\end{equation}

\end{lemma}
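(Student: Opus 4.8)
\smallskip
\noindent\emph{Proof strategy.}\ The plan is first to reduce to smooth $f$: both $f\mapsto E_r(f)$ and $f\mapsto\norm{df}^2$ are continuous on $H^1(M)$ (for the former, $f\mapsto\big((x,y)\mapsto f(y)-f(x)\big)$ is bounded from $L^2(M)$ into $L^2(\{(x,y):d(x,y)<r\})$), so by density of $C^\infty(M)$ it suffices to treat $f\in C^\infty(M)$. For $y\in B_r(x)$ the unique minimizing geodesic from $x$ to $y$ is $s\mapsto\exp_x(s\exp_x^{-1}y)$, since $r<i_0$; writing $t=d(x,y)$ and letting $\gamma_{x,u}(\tau)=\exp_x(\tau u)$ be the unit-speed geodesic in direction $u\in S^{n-1}\subset T_xM$, the fundamental theorem of calculus, Cauchy--Schwarz on $[0,1]$, and the substitution $\tau=st$ give
\[
  |f(y)-f(x)|^2\ \le\ t\int_0^t\big\langle\nabla f(\gamma_{x,u}(\tau)),\dot\gamma_{x,u}(\tau)\big\rangle^2\,d\tau .
\]
It is essential to retain the \emph{directional} derivative $\langle\nabla f,\dot\gamma_{x,u}\rangle$ rather than estimate it by $|\nabla f|$: a spherical average performed below supplies the factor $\tfrac1n$ hidden in $\nu_n=\omega_{n-1}/n$ (with $\omega_{n-1}$ the surface area of $S^{n-1}$), which is exactly what is needed for the sharp constant.

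Next I would integrate this over $y\in B_r(x)$ in polar normal coordinates, where $dy=J(u,t)\,t^{n-1}\,dt\,du$, and then over $x\in M$, invoking Lemma \ref{volformbound} as $J(u,t)\le 1+c_\MMM t\le 1+c_\MMM r$ (valid since $t\le r<\tfrac{i_0}{2}$), to obtain
\[
  E_r(f)\ \le\ (1+c_\MMM r)\int_{M}\int_{S^{n-1}}\int_0^r t^{n}\int_0^t\big\langle\nabla f(\gamma_{x,u}(\tau)),\dot\gamma_{x,u}(\tau)\big\rangle^2\,d\tau\,dt\,du\,dx .
\]
Exchanging the $\tau$- and $t$-integrations (legitimate, as the integrand is nonnegative) replaces $\int_\tau^r t^n\,dt$ by $\tfrac{r^{n+1}-\tau^{n+1}}{n+1}$, reducing matters to the quantity $G(\tau):=\int_{M}\int_{S^{n-1}}\langle\nabla f(\exp_x(\tau u)),\dot\gamma_{x,u}(\tau)\rangle^2\,du\,dx$ for fixed $\tau\in(0,r)$.

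The decisive step is recognizing that $G(\tau)$ is independent of $\tau$: I would interpret $G(\tau)$ as the integral over the unit tangent bundle $SM$ — carrying the Liouville measure, which is $dx\,du$ in the fibered description — of $(z,\theta)\mapsto\langle\nabla f(z),\theta\rangle^2$ precomposed with the time-$\tau$ geodesic flow $\phi_\tau\colon(x,u)\mapsto\big(\gamma_{x,u}(\tau),\dot\gamma_{x,u}(\tau)\big)$; since $\phi_\tau$ preserves the Liouville measure, $G(\tau)=\int_M\big(\int_{S^{n-1}}\langle\nabla f(z),\theta\rangle^2\,d\theta\big)\,dz=\nu_n\norm{df}^2$, using $\int_{S^{n-1}}\langle a,\theta\rangle^2\,d\theta=\nu_n|a|^2$. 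Substituting and computing $\tfrac1{n+1}\int_0^r(r^{n+1}-\tau^{n+1})\,d\tau=\tfrac{r^{n+2}}{n+2}$ yields precisely $E_r(f)\le\tfrac{\nu_n r^{n+2}}{n+2}(1+c_\MMM r)\norm{df}^2$. I expect this to be the main obstacle: routing the estimate through the geodesic-flow invariance of Liouville measure, together with the spherical average of $\langle\nabla f,\cdot\rangle^2$, is what allows one to use \emph{only} the upper Jacobian bound of Lemma \ref{volformbound}. A more naive route — bounding $|f(y)-f(x)|^2\le d(x,y)^2\int_0^1|\nabla f(\gamma)|^2$ and changing variables back to $M$ via $\exp_x^{-1}$ — both loses a factor of $n$, spoiling the constant, and would require a \emph{lower} bound on $J$ (control of $1/J$), which Lemma \ref{volformbound} supplies only when $c_\MMM r<1$. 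The remaining points — validity of the fundamental-theorem identity along geodesics for smooth $f$, the Tonelli interchanges, and $\exp_x$ restricting to a diffeomorphism of $\{|v|<r\}\subset T_xM$ onto $B_r(x)$ since $r<i_0$ — are routine.
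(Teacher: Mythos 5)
Your proof is correct and, in substance, the same as the paper's: the paper pulls out the Jacobian factor $J(u,t)\le 1+c_\MMM r$ via Lemma \ref{volformbound} and then cites Lemma 3.3 of \cite{article} for the bound on the remaining flat-ball integral, and the FTC/Cauchy--Schwarz/Liouville-invariance chain you write out is precisely the content of that cited lemma. You have simply made explicit what the paper defers to the reference, including the key observation that routing through geodesic-flow invariance of the Liouville measure is what lets the estimate use only the upper bound on $J$.
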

\begin{proof} Since smooth functions are dense in $H^1(M)$, it is enough to prove the above inequality for smooth functions. 
For any $r<\frac{i_{0}}{2},$ the exponential map at \textit{x} restricted to $B_{r}(x)$ is a diffeomorphism onto its image. Using polar co-ordinates $(u,t)$ on $T_xM$,  
\begin{align} \label{coordinatechange}
    \int_{B_{r}(x)}\lvert f(y)-f(x)\rvert^{2}dy=\int_{u\in S^{n-1}}\int_{t=0}^r\lvert f(exp_{x}(tu))-f(exp_{x}(0))\rvert^{2}J(u,t)t^{n-1}dtdu. 
\end{align}
 Substituting the upper bound of $J(u,t)$  from Lemma \ref{volformbound} in equation (\ref{coordinatechange}), 
 \begin{equation*}
  \int_{B_{r}(x)}|f(y)-f(x)|^{2}dy\leq (1+c_\MMM r)\int_{u\in S^{n-1}}\int_{t=0}^r\lvert f(exp_{x}(tu))-f(exp_{x}(0))\rvert^{2}t^{n-1}dtdu. 
 \end{equation*}
Now,
 \[\int_{u\in S^{n-1}}\int_{t=0}^r\lvert f(exp_{x}(tu))-f(exp_{x}(0))\rvert^{2}t^{n-1}dtdu=\int_{B_{r}(0)\subset T_{x}\M}|f(exp_{x}(v))-f(x)|^{2}dv.\]
By using the techniques for establishing an upper bound of the same integral in Lemma 3.3 (\cite{article}), we have
 \begin{equation}
   \int_{B_{r}(0)\subset T_{x}\M}|f(exp_{x}(v))-f(x)|^{2}dv \leq \frac{\nu_{n}r^{n+2}}{n+2}\norm{df}^{2}_{L^{2}}. \nonumber
 \end{equation}
 Combining this with the upper bound of the Jacobian determinant, the required inequality is obtained.  
 \end{proof}
In \cite{MR448253berger}, Berger showed that there exists a constant $c_n>0$ depending only on $n$ such that
\begin{equation} \label{volume bd}
    \text{vol}(B_r(x))\geq c_nr^n, \quad \forall r<i_0.
\end{equation}
Its values have been computed in  \cite{croke}. We establish a Poincaré-type inequality using this estimate. 
\begin{lemma}\label{poincare ineq corollary} For $0<\epsilon<r< \frac{i_0}{2}$, let  \textit{V} be a measurable subset of M with $diam \text{ }V\leq \epsilon$ such that \normalfont{\text{vol}}$(V)=\mu>0$. For all $x\in M$, let $\normalfont{\text{vol}(B_r(x))}\geq c_nr^n. $ Let $f\in L^{2}(M)$ and $a=\frac{1}{\mu}\int_{V}f(x)dx$ be the average value of $f$ on $V.$  Then 
    $$    \int_{V}|f(x)-a|^{2}dx\leq \frac{1}{c_n(r-\epsilon)^{n}}E_{r}(f,V).$$
\end{lemma}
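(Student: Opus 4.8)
\textit{Proof plan.} The idea is to exhibit a single small ball that is contained in $B_r(x)$ for \emph{every} $x\in V$, and then run the standard Poincaré argument — the variance‑minimising property of the mean together with Jensen's inequality — against the average of $f$ over that ball.

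First I would fix a point $p\in V$ (possible since $\mu=\operatorname{vol}(V)>0$) and set $W:=B_{r-\epsilon}(p)$. Since $\operatorname{diam}V\le\epsilon$, for any $x\in V$ and any $y\in W$ the triangle inequality gives $d(x,y)\le d(x,p)+d(p,y)<\epsilon+(r-\epsilon)=r$, so that $W\subseteq B_r(x)$ for all $x\in V$. Moreover $r-\epsilon<r<i_0$, so the volume lower bound (\ref{volume bd}) applies at the point $p$ with radius $r-\epsilon$, giving $\operatorname{vol}(W)\ge c_n(r-\epsilon)^n$.

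Next, let $b:=\frac{1}{\operatorname{vol}(W)}\int_W f(y)\,dy$ be the average of $f$ over $W$. Because $a$ is the mean of $f$ over $V$, the constant $a$ minimises $c\mapsto\int_V|f(x)-c|^2\,dx$ among all real constants; in particular $\int_V|f(x)-a|^2\,dx\le\int_V|f(x)-b|^2\,dx$. Writing $f(x)-b=\frac{1}{\operatorname{vol}(W)}\int_W\bigl(f(x)-f(y)\bigr)\,dy$ and applying Jensen's (Cauchy–Schwarz) inequality yields $|f(x)-b|^2\le\frac{1}{\operatorname{vol}(W)}\int_W|f(x)-f(y)|^2\,dy$. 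Integrating this over $x\in V$, using $W\subseteq B_r(x)$ to enlarge the inner domain of integration to $B_r(x)$, and recognising $E_r(f,V)=\int_V\int_{B_r(x)}|f(y)-f(x)|^2\,dy\,dx$, we obtain $\int_V|f(x)-a|^2\,dx\le\frac{1}{\operatorname{vol}(W)}E_r(f,V)\le\frac{1}{c_n(r-\epsilon)^n}E_r(f,V)$, which is the asserted bound.

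The argument is essentially routine once the common ball $W$ is identified, so I do not expect a genuine obstacle. The one point that needs slight care is the passage from the prescribed constant $a$ to the auxiliary constant $b$: carrying this out via the variance‑minimisation property of the mean, rather than a crude triangle inequality, is what preserves the constant $\bigl(c_n(r-\epsilon)^n\bigr)^{-1}$ without picking up extra factors.
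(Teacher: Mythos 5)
Your proof is correct and is essentially the argument the paper invokes (it simply defers to Lemma~3.4 of Burago--Ivanov--Kurylev, replacing the sectional-curvature volume bound by the hypothesis $\operatorname{vol}(B_r(x))\ge c_n r^n$): one identifies the common ball $W=B_{r-\epsilon}(p)\subseteq\bigcap_{x\in V}B_r(x)$, compares against the $W$-average via the variance-minimising property of the mean, and applies Cauchy--Schwarz. The one point worth noting is that you invoke the Berger-type lower bound at radius $r-\epsilon$ rather than $r$; this is legitimate because inequality~(\ref{volume bd}) holds uniformly for all radii below $i_0$, but it is slightly stronger than the literal hypothesis as worded in the lemma statement.
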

\begin{proof} The above inequality is obtained in Lemma 3.4 in \cite{article} where an upper bound on the sectional curvature is used to obtain a lower bound on  $\text{\normalfont{vol}}(B_r(x))$. If $\text{\normalfont{vol}}(B_r(x))\geq c_nr^n$, then the proof follows using the same techniques. 
\end{proof}
\section{An upper bound for $\lambda_{k}(\Gamma)$}
We find an upper bound of the eigenvalues of $-\Delta_\G$ in terms of the eigenvalues of $-\Delta_M$ for $M\in\MM$ using the inequalities proved in the previous section. Consider $M\in \MM$ and $\epsilon>0$ such that $0<\epsilon<\rho<\frac{i_0}{2}$. Let $\G$ be an $(\epsilon,\rho)$-approximation of $(M,\normalfont{\text{vol}})$ using a decomposition $\{V_{i}\}_{1}^{N}$. We define the discretization map that assigns to each vertex of the graph the average value of the function on $V_i$, the measurable set that contains the vertex.
\begin{definition}[Discretization map] \label{discretization defn}
 Let $\mu_{i}=\normalfont{\text{vol}}(V_{i})$. Then $P: L^{2}(M)\to L^{2}(\G)$  be a map defined as 
 \begin{equation*}
     Pf(x_{i})=\frac{1}{\mu_{i}}\int_{V_{i}}f d\mu .\end{equation*}
\end{definition}
The map
     $P^*: L^{2}(\G)\to L^{2}(M)$ is defined as 
     \begin{equation*}
       P^*(u)=\sum_{i=1}^{N}u(x_{i})\chi_{V_{i}}  .
     \end{equation*}
We see some immediate properties of the discretization map in the following lemma.
\begin{lemma}  Let $M\in\MM, f\in L^2(M)$ and $u\in L^2(\G).$ Then
\begin{enumerate}
    \item $P: L^{2}(M)\to L^{2}(\G)$ is a bounded linear operator :
    $\norm{Pf}\leq N\norm{f}_{2}.$
    \item $P^*$ is norm-preserving, i.e. $\norm{P^*(u)}_{L^{2}(M)}=\norm{u}_{L^{2}(\G)}$.
    \item $P^*$ is the adjoint of the discretization map $P$, i.e. $\langle f, P^*(u)\rangle_{L^{2}(M)}=\langle P(f), u\rangle_{L^{2}(\G)}$.
\end{enumerate}
\end{lemma}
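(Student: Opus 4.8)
The plan is to verify the three assertions by direct computation, using only that $\{V_i\}_{i=1}^N$ is a measurable partition of $M$ (pairwise disjoint up to measure zero, covering $M$) with $\mu_i=\text{vol}(V_i)>0$, together with the definitions of the $L^2(\G)$ inner product $\langle u,v\rangle_{L^2(\G)}=\sum_i \mu_i u(x_i)v(x_i)$ and of $P,P^*$. For part (1), I would expand $\norm{Pf}^2=\sum_{i=1}^N \mu_i |Pf(x_i)|^2$, substitute $Pf(x_i)=\frac{1}{\mu_i}\int_{V_i}f\,d\mu$ to get $\norm{Pf}^2=\sum_i \frac{1}{\mu_i}\bigl|\int_{V_i}f\bigr|^2$, and apply the Cauchy--Schwarz inequality on each $V_i$, namely $\bigl|\int_{V_i}f\bigr|^2\le \mu_i\int_{V_i}|f|^2$. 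Summing and using that the $V_i$ are disjoint with union $M$ yields $\norm{Pf}\le\norm{f}_2$, which in particular implies the stated (weaker) bound $\norm{Pf}\le N\norm{f}_2$; linearity of $P$ is immediate from linearity of the integral.

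For part (2), since $P^*(u)=\sum_i u(x_i)\chi_{V_i}$ and the indicator functions $\chi_{V_i}$ have pairwise disjoint supports covering $M$, one has $|P^*(u)|^2=\sum_i |u(x_i)|^2\chi_{V_i}$ pointwise almost everywhere, so integrating over $M$ gives $\norm{P^*(u)}_{L^2(M)}^2=\sum_i |u(x_i)|^2\,\text{vol}(V_i)=\sum_i \mu_i|u(x_i)|^2=\norm{u}_{L^2(\G)}^2$. For part (3), I would write $\langle f,P^*(u)\rangle_{L^2(M)}=\int_M f\sum_i u(x_i)\chi_{V_i}=\sum_i u(x_i)\int_{V_i}f\,d\mu$, then recognize $\int_{V_i}f\,d\mu=\mu_i\,Pf(x_i)$ from the definition of $P$, so the sum equals $\sum_i \mu_i\,Pf(x_i)\,u(x_i)=\langle Pf,u\rangle_{L^2(\G)}$, as claimed. (All functions here are real-valued, so no complex conjugation is needed.)

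None of the three steps is genuinely difficult; the only point requiring care is the partition bookkeeping, i.e.\ that the $V_i$ are disjoint up to null sets and cover $M$, so that there is no double counting when passing between sums over $i$ and integrals over $M$ --- this is exactly what is built into the definition of an $(\epsilon,\rho)$-approximation. The estimate in (1) is in fact an equality-sharpened inequality (with constant $1$ rather than $N$), but since only the operator-boundedness of $P$ is needed downstream, the coarser bound suffices and I would state it as written.
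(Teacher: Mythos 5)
Your proof is correct and is the direct, elementary argument one would expect; the paper states this lemma as an ``immediate'' property and omits the proof entirely, so there is no alternative route to compare against. Your observation that part (1) actually holds with constant $1$ in place of $N$ (via Cauchy--Schwarz on each $V_i$) is accurate and in fact is exactly what one needs to treat $P$ as a contraction in later estimates; the stated constant $N$ is simply a lazy overestimate.
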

\begin{lemma} If $0<\epsilon<\frac{\rho}{n}<\frac{i_0}{2n}$ and $\normalfont{\text{vol}}(B_r(x))\geq  c_nr^n$ for $\epsilon<r<\rho$ then \label{P*Pf-f}
\begin{equation*}
 \norm{f-P^{*}Pf}^{2}_{L^{2}} \leq \frac{4n\nu_{n}}{c_n }(1+c_\MMM r)\epsilon^2\norm{df}^2_{L^{2}}.
\end{equation*}
\end{lemma}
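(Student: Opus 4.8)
The plan is to use that $P^{*}Pf$ is simply the piecewise-constant function built from the cell-averages of $f$, so that $f-P^{*}Pf$ measures the local oscillation of $f$ over the pieces $V_i$, and then to control this oscillation by the Poincar\'e-type inequality of Lemma \ref{poincare ineq corollary} together with the average-dispersion bound of Lemma \ref{theo1}. Concretely, from the definitions of $P$ and $P^{*}$ one has $P^{*}Pf=\sum_{i=1}^{N}a_i\,\chi_{V_i}$ with $a_i=\frac{1}{\mu_i}\int_{V_i}f\,d\mu$ the average of $f$ on $V_i$; since $\{V_i\}_{i=1}^{N}$ partitions $M$ this gives immediately
\[
\norm{f-P^{*}Pf}^{2}_{L^{2}}=\sum_{i=1}^{N}\int_{V_i}\lvert f-a_i\rvert^{2}\,d\mu .
\]

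Next I would bound each summand using Lemma \ref{poincare ineq corollary}. Each cell satisfies $V_i\subset B_{\epsilon}(x_i)$, hence $\operatorname{diam}V_i\le 2\epsilon$, and the volume lower bound $\mathrm{vol}(B_{r}(x))\ge c_n r^{n}$ required there is exactly the hypothesis of the present lemma (Berger's estimate (\ref{volume bd})). Applying the Poincar\'e-type inequality on each $V_i$ with a comparison radius $r$ chosen proportional to $\epsilon$ (so that $r$ stays below $i_0/2$ and $r-2\epsilon$ is a fixed multiple of $\epsilon$), and summing — the outer integral in $E_r$ runs over the partition, so $\sum_i E_r(f,V_i)=E_r(f)$ — yields
\[
\norm{f-P^{*}Pf}^{2}_{L^{2}}\le\frac{1}{c_n (r-2\epsilon)^{n}}\,E_{r}(f).
\]
Feeding in Lemma \ref{theo1}, namely $E_r(f)\le\frac{\nu_n r^{n+2}}{n+2}(1+c_\MMM r)\norm{df}^{2}_{L^{2}}$, and using $r\asymp\epsilon$, $r-2\epsilon\asymp\epsilon$, I obtain
\[
\norm{f-P^{*}Pf}^{2}_{L^{2}}\le\frac{\nu_n r^{n+2}}{c_n (n+2)(r-2\epsilon)^{n}}\,(1+c_\MMM r)\,\norm{df}^{2}_{L^{2}}\le \frac{4n\,\nu_n}{c_n}\,(1+c_\MMM r)\,\epsilon^{2}\,\norm{df}^{2}_{L^{2}},
\]
the last step being the elementary numerical estimate $\frac{r^{n+2}}{(n+2)(r-2\epsilon)^{n}}\le 4n\,\epsilon^{2}$ for the chosen $r$.

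I do not expect a conceptual obstacle: the statement is essentially the composition of Lemmas \ref{poincare ineq corollary} and \ref{theo1}, with the decisive $\epsilon^{2}$-gain coming from the fact that the Poincar\'e inequality is applied on the small cells $V_i$ (diameter $\lesssim\epsilon$) rather than on balls of radius $r$. The only place needing care is the bookkeeping of numerical constants — matching the diameter of the Voronoi cells with $\epsilon$, pinning down the comparison radius so the final constant comes out as $4n$, and keeping that radius below $i_0/2$ (equivalently below $\rho$); all of this costs only constants that can be absorbed into $c_\MMM$ or into the admissible range of $\frac{\epsilon}{\rho}$, and the precise value of the constant is immaterial for the way the lemma is used later in the paper.
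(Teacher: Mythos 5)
Your proof follows the same route as the paper's: decompose $\norm{f-P^*Pf}^2_{L^2}$ over the cells $V_i$, apply the Poincar\'e inequality of Lemma~\ref{poincare ineq corollary} to each cell, sum to obtain a bound by $E_r(f)$, and then invoke Lemma~\ref{theo1} with $r$ a suitable multiple of $\epsilon$. The one place you diverge is in the diameter bookkeeping, and it is worth flagging because it makes your stated numerical estimate false. You (correctly) observe that $V_i\subset B_\epsilon(x_i)$ only gives $\operatorname{diam}V_i\le 2\epsilon$, and accordingly write $(r-2\epsilon)^{-n}$ in the Poincar\'e step; the paper instead writes $(r-\epsilon)^{-n}$ (tacitly treating $\operatorname{diam}V_i\le\epsilon$) and then sets $r=n\epsilon$, which is precisely what produces the constant $4n$. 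With your $(r-2\epsilon)^{-n}$, the quantity $\frac{r^{n+2}}{(n+2)(r-2\epsilon)^{n}}$ is minimized at $r=(n+2)\epsilon$, where it equals $(n+2)\bigl(1+\tfrac{2}{n}\bigr)^{n}\epsilon^{2}$, which exceeds $4n\,\epsilon^{2}$ for every $n\ge 2$; so the inequality $\frac{r^{n+2}}{(n+2)(r-2\epsilon)^{n}}\le 4n\,\epsilon^{2}$ has no valid choice of $r$. You preempt this by saying the precise constant is immaterial, and that is true --- renaming the constant (say to $16n$) fixes everything and changes nothing downstream --- but be aware that the exact figure $4n$ in the paper is an artifact of the $(r-\epsilon)^{-n}$ form of the Poincar\'e step, not something your more careful $2\epsilon$ version can reproduce.
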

\begin{proof}
Let $V$ be a measurable subset of $M$ and $\chi_V$ denote the characteristic function on $V.$ According to definition,
\begin{align}
    \norm{f-P^{*}Pf}^{2}_{L^{2}}&=\int_{M}|f(x)-P^{*}Pf(x)|^{2}dx \nonumber \\ &=\int_{M}|f(x)-\sum_{i=1}^{N}Pf(x_{i})\chi_{V_{i}}(x)|^{2}dx \nonumber \\ &=\sum_{i=1}^{N}\int_{V_{i}}|f(x)-Pf(x_{i})|^{2}dx.
\nonumber\end{align} 
From Lemma \ref{poincare ineq corollary}, for all $r$ such that $0<\epsilon<r<\rho$ and for all $1\leq i\leq N$,
\begin{equation}
    \int_{V_{i}}|f(x)-Pf(x_{i})|^{2}dx \leq \frac{1}{c_n(r-\epsilon)^{n}}E_{r}(f,V_{i}). \nonumber
\end{equation}    
From Lemma \ref{theo1},
\begin{equation}
    \norm{f-P^{*}Pf}^{2}_{L^{2}}\leq \frac{1}{c_n(r-\epsilon)^{n}}E_{r}(f) \leq \frac{\nu_n}{c_n(n+2)}\left(\frac{r}{r-\epsilon}\right)^n r^2 (1+c_\MMM r)\norm{df}^2_{L^{2}}.  \nonumber
\end{equation}
Putting $r=n\epsilon$, we have $\left(\frac{r}{r-\epsilon}\right)^{n}=\left(\frac{n}{n-1}\right)^{n}<4.$ 
Hence,  \begin{equation} \label{bound of f and PP*}
    \norm{f-P^{*}Pf}^{2}_{L^{2}} \leq\frac{4n\nu_{n}}{c_n }(1+c_\MMM r)\epsilon^2 \norm{df}^2_{L^{2}}.%\leq 4n \Ju( r)\epsilon^2 \norm{df}^2_{L^{2}}.
\end{equation}
\end{proof}
As $P^{*}$ is norm-preserving, the next corollary follows immediately using the triangle inequality.
\begin{corollary} \label{cor2} For any $f\in H^1(M),$
\begin{equation}
|\norm{Pf}-\norm{f}_{L^{2}}|^2\leq \frac{4n\nu_{n}}{c_n } (1+c_\MMM r)\epsilon^2 \norm{df}^2_{L^{2}}.      \nonumber 
\end{equation}
\end{corollary}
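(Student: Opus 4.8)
The plan is to derive Corollary~\ref{cor2} directly from Lemma~\ref{P*Pf-f}, with no additional geometric ingredient. The one subtlety is that $\norm{Pf}$ is an $L^{2}(\G)$-norm whereas $\norm{f}_{L^{2}}$ is an $L^{2}(M)$-norm, so before comparing them I would move both quantities into the same Hilbert space. This is exactly what the norm-preserving property of $P^{*}$ supplies: $\norm{Pf}_{L^{2}(\G)}=\norm{P^{*}Pf}_{L^{2}(M)}$. Once both sides are expressed as $L^{2}(M)$-norms, the difference of the two norms is dominated by the $L^{2}(M)$-norm of the difference of the corresponding functions.

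Concretely, I would first rewrite $|\norm{Pf}-\norm{f}_{L^{2}}|$ as $|\norm{P^{*}Pf}_{L^{2}(M)}-\norm{f}_{L^{2}(M)}|$ using the norm-preservation identity, then apply the reverse triangle inequality in $L^{2}(M)$ to bound this by $\norm{P^{*}Pf-f}_{L^{2}(M)}$, square both sides, and finally substitute the estimate~(\ref{bound of f and PP*}) obtained in Lemma~\ref{P*Pf-f}. The only step that demands any attention is the bookkeeping of which space each norm lives in, so that the norm-preservation identity and the triangle inequality are each invoked in the space where they are valid; beyond that there is no genuine obstacle, since the substantive analytic work — the volume-form bound of Lemma~\ref{volformbound}, the average-dispersion estimate of Lemma~\ref{theo1}, and the Poincaré-type inequality of Lemma~\ref{poincare ineq corollary}, all feeding into Lemma~\ref{P*Pf-f} — has already been carried out.
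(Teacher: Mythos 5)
Your proposal is correct and follows exactly the paper's own argument: use the norm-preservation of $P^{*}$ to replace $\norm{Pf}_{L^{2}(\G)}$ by $\norm{P^{*}Pf}_{L^{2}(M)}$, apply the reverse triangle inequality in $L^{2}(M)$, and then invoke Lemma~\ref{P*Pf-f}. The paper's one-line proof is precisely this chain, and your attention to which Hilbert space each norm lives in is exactly the right bookkeeping.
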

\begin{proof}
    \begin{equation}
     |\norm{Pf}-\norm{f}_{L^{2}}|^2 \leq |\norm{P^{*}Pf}-\norm{f}|^2 \leq \frac{4n\nu_{n}}{c_n } (1+c_\MMM r)\epsilon^2 \norm{df}^2_{L^{2}}.  \nonumber   
    \end{equation}
\end{proof}    
Next, we define the discrete Dirichlet energy and study its relation to the Dirichlet energy functional defined on $H^1(M).$ 
 \begin{definition}[Discrete Dirichlet energy] Let $\G=\Gamma(F,\rho,\mu)$ be an $(\epsilon,\rho)$- approximation of $M\in\MM.$ For $u\in L^2(\G)$, the Dirichlet energy on $\G$ is given by
 \begin{equation}
 \norm{\delta(u)}^{2}=\frac{n+2}{\nu_{n}\rho^{n+2}}\sum_{i}\sum_{j:x_{i}\sim x_{j}}\mu_{i}\mu_{j}|u(x_{j})-u(x_{i})|^{2}.    
 \end{equation}
 \end{definition}
 \begin{lemma} \label{delp} Let  $\G$ be a $(\epsilon,\rho)$-approximation of $(M,\normalfont{\text{vol}})$. For $ f\in H^1(M)$, 
$$
     \norm{\delta(Pf)}^2\leq (1+c_\MMM (\rho+2\epsilon))\left(1+\frac{2\epsilon}{\rho}\right)^{n+2}\norm{df}^{2}_{L^{2}}. $$
    
 \end{lemma}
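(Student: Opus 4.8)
The plan is to dominate the discrete Dirichlet energy $\norm{\delta(Pf)}^2$ by the average dispersion $E_{\rho+2\epsilon}(f)$ and then apply Lemma \ref{theo1}. Since $f\mapsto Pf(x_i)=\mu_i^{-1}\int_{V_i}f$ is bounded and linear on $L^2(M)\supset H^1(M)$, and $H^1$-convergence controls both $\norm{df}_{L^2}$ and the values $Pf(x_i)$, it suffices to prove the inequality for smooth $f$; I would record this reduction first, mirroring the density argument used in Lemma \ref{theo1} and Lemma \ref{P*Pf-f}.

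The first step is an edge-wise estimate. For an edge $e_{ij}$ write $Pf(x_j)-Pf(x_i)=\frac{1}{\mu_i\mu_j}\int_{V_i}\int_{V_j}\bigl(f(y)-f(x)\bigr)\,dy\,dx$ and apply the Cauchy--Schwarz (Jensen) inequality with respect to the probability measure $\frac{1}{\mu_i\mu_j}\,dy\,dx$ on $V_i\times V_j$ to obtain
\[
\mu_i\mu_j\,|Pf(x_j)-Pf(x_i)|^2\ \le\ \int_{V_i}\int_{V_j}|f(y)-f(x)|^2\,dy\,dx .
\]
The second step is a covering-and-disjointness argument. Fix $i$ and $x\in V_i$. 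If $x_i\sim x_j$ then $d(x_i,x_j)<\rho$, and since $V_i\subset B_\epsilon(x_i)$ and $V_j\subset B_\epsilon(x_j)$, every $y\in V_j$ satisfies $d(x,y)\le d(x,x_i)+d(x_i,x_j)+d(x_j,y)<\rho+2\epsilon$; hence $\bigcup_{j:\,x_i\sim x_j}V_j\subset B_{\rho+2\epsilon}(x)$, and because $\{V_j\}$ is a partition of $M$ the sets $V_j$ are pairwise disjoint, so $\sum_{j:\,x_i\sim x_j}\int_{V_j}|f(y)-f(x)|^2\,dy\le \int_{B_{\rho+2\epsilon}(x)}|f(y)-f(x)|^2\,dy$. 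Summing the edge-wise estimate over $i$ and $j$ and using that the $V_i$ partition $M$ yields $\sum_i\sum_{j:\,x_i\sim x_j}\mu_i\mu_j|Pf(x_j)-Pf(x_i)|^2\le E_{\rho+2\epsilon}(f)$.

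For the final step I would multiply by $\frac{n+2}{\nu_n\rho^{n+2}}$ and invoke Lemma \ref{theo1} with $r=\rho+2\epsilon$ (legitimate once the parameters are small enough that $\rho+2\epsilon<\frac{i_0}{2}$, which holds in the regime of Theorem \ref{main 1}): this bounds $E_{\rho+2\epsilon}(f)$ by $\frac{\nu_n(\rho+2\epsilon)^{n+2}}{n+2}(1+c_\MMM(\rho+2\epsilon))\norm{df}^2_{L^2}$, and the prefactors collapse to $\frac{(\rho+2\epsilon)^{n+2}}{\rho^{n+2}}=\bigl(1+\tfrac{2\epsilon}{\rho}\bigr)^{n+2}$, giving exactly the asserted inequality. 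There is no deep obstacle here; the only points requiring care are the covering step — checking that the triangle inequality produces the radius $\rho+2\epsilon$ uniformly over $x\in V_i$, and that disjointness of the partition is precisely what prevents double counting when passing from the sum of integrals over the $V_j$ to a single integral over $B_{\rho+2\epsilon}(x)$ — together with recording the mild restriction $\rho+2\epsilon<\frac{i_0}{2}$ needed so that Lemma \ref{theo1} (and hence the diffeomorphism property of $\exp_x$ used inside it) applies.
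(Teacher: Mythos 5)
Your proof is correct and follows essentially the same route as the paper: edge-wise Cauchy--Schwarz to dominate $\norm{\delta(Pf)}^2$ by $\frac{n+2}{\nu_n\rho^{n+2}}E_{\rho+2\epsilon}(f)$ via the covering $U(x)\subseteq B_{\rho+2\epsilon}(x)$, then Lemma \ref{theo1} with $r=\rho+2\epsilon$. The only difference is cosmetic: where the paper cites Lemma 4.3(2) of Burago et al.\ for the edge-wise bound, you derive it directly from Jensen's inequality, which is a welcome expansion of that step.
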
  
 \begin{proof}
    From  Lemma 4.3(2) in \cite{article}, we derive
      \begin{align}
         \norm{\delta(Pf)}^{2}&\leq \frac{n+2}{\nu_{n}\rho^{n+2}}\sum_{i}\sum_{j:x_{j}\sim x_{i}}\int_{V_{i}}\int_{V_{j}}|f(y)-f(x)|^{2}dydx \nonumber \\ &= \frac{n+2}{\nu_{n}\rho^{n+2}} \int_{M}\int_{U(x)}|f(y)-f(x)|^{2}dydx. \label{ux}
     \end{align}
where $U(x)=\bigcup_{j:x_{j}\sim x_{i}}V_{j}$ in  (\ref{ux}). We also observe that $U(x)\subseteq B_{\rho+2\epsilon}(x).$ \\ Hence, \begin{equation} \label{rhoeps}
     \norm{\delta(Pf)}^{2}\leq \frac{n+2}{\nu_{n}\rho^{n+2}}E_{\rho+2\epsilon}(f).    
\end{equation}
Using Lemma \ref{theo1} and from  (\ref{rhoeps}), we have \begin{align}
      \norm{\delta(Pf)}^{2}&\leq (1+c_\MMM (\rho+2\epsilon))\frac{(\rho+2\epsilon)^{n+2}}{\rho^{n+2}}\norm{df}^{2}_{L^{2}} \nonumber \\ &=  (1+c_\MMM(\rho+2\epsilon))\left(1+\frac{2\epsilon}{\rho}\right)^{n+2}\norm{df}^{2}_{L^{2}}. \nonumber
     \end{align} \end{proof}   
Now we can calculate an upper bound of the eigenvalue of the graph Laplacian operator using the lemmas proved in this section.
 \begin{theorem} \label{upperbd}
Let $0<\epsilon<\rho\leq\frac{i_0}{2} \text{, } M\in\MM $  and $\G$ be a $(\epsilon,\rho)$-approximation of $(M,\normalfont{\text{vol}})$.  Let $\lk(M)$ and $\lambda_{k}(\Gamma)$ denote the k-th eigenvalues of $-\Delta_{M}$  and $-\Delta_{\G}$ respectively. There exist positive constants $C_n$ and $C_{\mathcal{M}}$ depending only on $n$ and $\MMM$ respectively  such that for any $\rho$, $\frac{\epsilon}{\rho}< \frac{1}{C_n}$ ,
$$ \lambda_{k}(\Gamma)\leq \left(1+C_n\frac{\epsilon}{\rho}+C_{\mathcal{M}}\rho \right)\lambda_k(M)+\rho C_{\MMM}\lambda^{\frac{3}{2}}_k(M).$$  
\end{theorem}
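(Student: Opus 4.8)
The plan is to establish the bound through the Courant--Fischer min--max characterization of $\lambda_k(\G)$, using as trial space the image under the discretization map $P$ of the span $E_k\subset L^2(M)$ of the first $k$ eigenfunctions $f_1,\dots,f_k$ of $-\Delta_M$ (chosen $L^2$-orthonormal). Since
$$\lambda_k(\G)=\min_{\substack{S\subseteq L^2(\G)\\ \dim S=k}}\ \max_{0\ne u\in S}\frac{\norm{\delta(u)}^{2}}{\norm{u}^{2}_{L^2(\G)}},$$
it suffices to check that $P$ is injective on $E_k$, so that $\dim P(E_k)=k$, and that $\norm{\delta(Pf)}^{2}/\norm{Pf}^{2}_{L^2(\G)}$ satisfies the asserted estimate for every $0\ne f\in E_k$.

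Fix $f=\sum_{i=1}^{k}a_if_i\in E_k$ with $\norm{f}_{L^2}=1$; then $\norm{df}^{2}_{L^2}=\sum_i a_i^{2}\lambda_i(M)\le\lambda_k(M)$. For the numerator, Lemma~\ref{delp} combined with this bound gives $\norm{\delta(Pf)}^{2}\le(1+c_\MMM(\rho+2\epsilon))(1+\tfrac{2\epsilon}{\rho})^{n+2}\lambda_k(M)$. For the denominator, Corollary~\ref{cor2} applied with $r=n\epsilon$ (which is $<\rho\le i_0/2$ as soon as $\epsilon/\rho<1/n$) yields $\big|\norm{Pf}-1\big|\le B_\MMM\,\epsilon\sqrt{\lambda_k(M)}$, where $B_\MMM$ absorbs the $\MMM$-dependent quantity $(\tfrac{4n\nu_n}{c_n}(1+c_\MMM\tfrac{i_0}{2}))^{1/2}$. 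Hence, provided $B_\MMM\,\epsilon\sqrt{\lambda_k(M)}\le\tfrac12$, the map $P$ is injective on $E_k$ (indeed $\norm{Pf}\ge\tfrac12$) and $\norm{Pf}^{2}\ge(1-B_\MMM\epsilon\sqrt{\lambda_k(M)})^{2}$. Dividing the two estimates and using $\epsilon<\rho\le i_0/2$, the hypothesis $\epsilon/\rho<1/C_n$ to bound $(1+\tfrac{2\epsilon}{\rho})^{n+2}\le 1+C_n\tfrac{\epsilon}{\rho}$, the inequality $c_\MMM(\rho+2\epsilon)\le 3c_\MMM\rho$, and $(1-y)^{-2}\le 1+Cy$ for $y\le\tfrac12$, one gets
$$\frac{\norm{\delta(Pf)}^{2}}{\norm{Pf}^{2}}\le\Big(1+C_n\tfrac{\epsilon}{\rho}+C_\MMM\rho\Big)\lambda_k(M)+C_\MMM\,\epsilon\,\lambda_k(M)^{3/2};$$
since $\epsilon<\rho$ the last term is $\le C_\MMM\rho\,\lambda_k(M)^{3/2}$, and taking the maximum over $f\in E_k$ followed by the minimum in Courant--Fischer gives the theorem in this regime.

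It remains to treat the complementary case $B_\MMM\epsilon\sqrt{\lambda_k(M)}>\tfrac12$, i.e. $\lambda_k(M)>\tfrac{1}{4B_\MMM^{2}\epsilon^{2}}$. Here I would fall back on the crude a priori bound $\lambda_k(\G)\le\norm{-\Delta_\G}\le 2\max_i\frac{1}{\mu_i}\sum_{j:x_j\sim x_i}w_{ij}\le C_\MMM\rho^{-2}$, where the last inequality uses that the sets $V_j$ with $x_j\sim x_i$ are disjoint and contained in $B_{\rho+\epsilon}(x_i)$ together with the Bishop--Gromov bound $\mathrm{vol}(B_{\rho+\epsilon}(x_i))\le C_\MMM\rho^{n}$ (valid for $\epsilon<\rho\le i_0/2$). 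Combining $\lambda_k(M)>\tfrac{1}{4B_\MMM^{2}\epsilon^{2}}$ with $\epsilon<\rho$ gives $C_\MMM\rho\,\lambda_k(M)^{3/2}\ge\lambda_k(\G)$ after enlarging $C_\MMM$, so the asserted inequality holds trivially; choosing $C_n$ and $C_\MMM$ to be the larger of the constants arising in the two regimes completes the argument.

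The genuinely delicate part is the constant bookkeeping in the middle step: making the three defect terms come out in exactly the stated shape---$\tfrac{\epsilon}{\rho}$ and $\rho$ multiplying $\lambda_k(M)$, and $\rho$ multiplying $\lambda_k(M)^{3/2}$---and checking that the single hypothesis $\epsilon/\rho<1/C_n$ with $C_n$ depending only on $n$ is enough for all the Taylor-type expansions, with the $\lambda_k$-dependent threshold $B_\MMM\epsilon\sqrt{\lambda_k(M)}\le\tfrac12$ disposed of by the crude bound rather than by an extra smallness assumption. All of the analytic input---the Jacobian estimate, the average-dispersion bound, and the behaviour of $P$ and of the discrete Dirichlet energy---is already packaged in Lemmas~\ref{volformbound}, \ref{theo1}, \ref{delp} and Corollary~\ref{cor2}, so no new estimate should be needed.
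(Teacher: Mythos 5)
Your proposal follows the same skeleton as the paper's proof of Theorem~\ref{upperbd}: apply the min--max principle with trial subspace $L=P(W)$, where $W$ is the span of the first $k$ eigenfunctions of $-\Delta_M$, then bound the discrete Dirichlet energy $\norm{\delta(Pf)}^2$ by Lemma~\ref{delp} and the defect $\bigl|\norm{Pf}-\norm{f}\bigr|$ by Corollary~\ref{cor2} (with $r=n\epsilon$), and finally Taylor-expand. The numerator/denominator bookkeeping you describe matches the paper's estimates~(\ref{uppbd})--(\ref{num1}) up to harmless constants, and the cross terms do absorb into the stated $\frac{\epsilon}{\rho}\lambda_k$, $\rho\lambda_k$, and $\rho\lambda_k^{3/2}$ shape as you claim.

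Where you genuinely diverge is in handling the threshold that makes the denominator bounded away from zero and $P|_W$ injective. The paper enforces this by invoking Cheng's uniform bound $\LL=\lambda_{\MMM,k}\geq\lambda_k(M)$ and then requiring $\epsilon<\frac{\sqrt{c_n}}{3\sqrt{n\nu_n\LL}}$ --- a condition that depends on $k$, which sits slightly awkwardly with the theorem's claim that the smallness thresholds depend only on $n$. You instead split into two regimes and, when $B_\MMM\epsilon\sqrt{\lambda_k(M)}>\tfrac12$, fall back on the elementary operator-norm bound $\lambda_k(\G)\leq 2\max_i\frac{1}{\mu_i}\sum_{j\sim i}w_{ij}\leq C_\MMM\rho^{-2}$ (from disjointness of the $V_j$, the containment $U(x_i)\subset B_{\rho+\epsilon}(x_i)$, and Bishop--Gromov for an upper volume bound), observing that then $C_\MMM\rho\,\lambda_k(M)^{3/2}$ already dominates $\lambda_k(\G)$ since $\epsilon<\rho$. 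This is a nice, self-contained fix: it lets the constants truly depend only on $n$ and $\MMM$, and sidesteps Cheng's theorem entirely for the upper bound. The trade-off is that your ``bad-regime'' bound is vacuous as an approximation statement, but that is exactly the point --- for large $\lambda_k(M)$ the stated inequality is weak enough to hold automatically. So: same route on the substance, with a small but genuine improvement in how the non-degeneracy hypothesis is discharged.
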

\begin{proof} We use the min-max principle to prove the theorem. Hence it is sufficient to find a linear subspace $L\subset L^{2}(\G) $ with dim$ L=k$ such that the Rayleigh quotient restricted to $L$ given by $ sup_{u\in L\setminus \{0\}} \frac{\norm{\delta u}^{2}}{\norm{u}^{2}}$ is less than or equal to the right-hand side of the above inequality. 
\par Let $W\subset H^{1}(M)$ be the linear span of orthonormal eigenfunctions of $-\Delta_{M}$ corresponding to eigenvalues $\lambda_{1}(M)\leq \lambda_2(M)\leq...\leq \lambda_{k}(M).$ For any $f \in W,$ 
    $$\norm{df}^{2}_{L^{2}}=\langle -\Delta_Mf,f\rangle \leq \lambda_{k}(M)\norm{f}^{2}_{L^{2}}.$$
    From Corollary \ref{cor2} and considering $\rho<\frac{1}{c_\MMM}$, \begin{equation}
        \norm{Pf}\geq \norm{f}_{L^{2}}-2\epsilon \sqrt{\frac{n\nu_{n}}{c_n }(1+\rho c_\MMM)
        }\norm{df}_{L^{2}} \geq \left(1-3\epsilon \sqrt{\frac{n\nu_{n}}{c_n }\lk(M)}\right)\norm{f}_{L^{2}}. \label{uppbd}
    \end{equation}
Let $\lambda_{\mathcal{M},k}$ be an upperbound of $\lambda_k(M)$ for any $M\in \MMM.$ Then for $\epsilon<\frac{\sqrt{c_n}}{3\sqrt{n\nu_n \LL}}$, $P|_{W}$ is injective.  Hence, the dimension of the subspace $L=P(W)$ is $k.$ We pick $u\in L\setminus\{0\}$ and let $f \in W$ be such that $u=Pf.$ Then from equation (\ref{uppbd}), 
  \begin{equation} \label{dim}
      \norm{u}^{2}\geq \left(1-3\epsilon \sqrt{\frac{n\nu_{n}}{c_n }\lk(M)}\right)\norm{f}^{2}_{L^{2}}. 
  \end{equation}
 By Lemma \ref{delp} for $\epsilon<\frac{\rho}{2}$,
   \begin{align}
      \norm{\delta(u)}^{2}\leq  (1+2\rho c_\MMM)\left(1+\frac{2\epsilon}{\rho}\right)^{n+2} \lk(M)\norm{f}^{2}_{L^{2}}. \label{num1}
  \end{align}
  From inequalities (\ref{dim}) and (\ref{num1}),  we have 
  \begin{align} \label{upperbd}
   \lambda_{k}(\Gamma)\leq  \frac{\norm{\delta u}^{2}}{\norm{u}^{2}}&\leq  \frac{ (1+2\rho c_\MMM)\left(1+\frac{2\epsilon}{\rho}\right)^{n+2} \lk(M)}{\left(1-3\epsilon \sqrt{\frac{n\nu_{n}}{c_n }\lk(M)}\right)}. \nonumber
  \end{align}
Using the Taylor series expansion of terms $\left(1+\frac{2\epsilon}{\rho}\right)^{n+2}$ and $\left(1-3\epsilon \sqrt{\frac{n\nu_{n}}{c_n }\lk(M)}\right)^{-1}$ we obtain a constant $C_{\mathcal{M}}$ depending only on $\MMM$ and a constant $C_n$ depending on $n$ such that for any $\epsilon, \frac{\epsilon}{\rho}< \frac{1}{C_n}$ ,
$$ \lambda_{k}(\Gamma)\leq \left(1+C_n\frac{\epsilon}{\rho}+C_{\mathcal{M}}\rho \right)\lambda_k(M)+\rho C_{\MMM}\lambda^{\frac{3}{2}}_k(M).$$ 
 \end{proof}
\begin{remark}
   We observe that it is enough to assume a lower bound on Ricci curvature to obtain an upper bound on $J(u,t)$, which will give us a lower bound on $\lk(M)$ using $\lk(\G)$. One can apply Bishop- Gromov volume comparison theorem  and conclude that for Ricci curvature bounded below by $\lambda$ and $t<i_0$, $J(u,t)\leq J_\lambda(t)$ where 
\begin{equation}
        J_\lambda(t)=\left(1+\sum_{j\in\mathbb{N}}\frac{|\lambda|^jt^{2j}}{(2j+1)!}\right)^{n-1}.
    \end{equation}
   We can then follow the calculations of  Theorem \ref{upperbd} and obtain a lower bound of $\lk(M).$
\end{remark}
\section{Smoothing Operator}
 In this section, we study the smoothing operator defined in \cite{article} and its properties, which will help us to obtain a lower bound for the eigenvalues of $-\Delta_\G$ using eigenvalues of $-\Delta_M$.
\begin{definition} Fixing a positive $r<\rho<\frac{i_0}{2}$, the following functions are defined:
\begin{enumerate}
    \item \textbf{Smoothing function.} Let $\psi: \mathbb{R}_{+}\to \mathbb{R}_{+}$ be defined as 
    \[   
\psi(t)=
     \begin{cases}
        \frac{n+2}{2\nu_n}(1-t^2)&\quad\text{if } 0\leq t\leq 1\\        0&\quad\text{if }t\geq 1.\\

     \end{cases}
\]
\item \textbf{Kernel.} Let $k_r: M\times M \to \mathbb{R}_+$ be defined as 
\begin{equation*}
    k_r(x,y)=r^{-n}\psi(r^{-1}d(x,y)).
\end{equation*}
where $d(x,y)$ denotes the Riemannian distance between $x$ and $y$ in $M.$

\item \textbf{Associated integral operator.} Let $\Lambda_r^0:L^2(M)\to C^{0,1}(M)$ be given by 
\begin{equation*}
    \Lambda_r^0f(x)=\int_M f(y)k_r(x,y)dy.
\end{equation*}
\item Let $\theta\in C^{0,1}(M)$ be defined as \begin{equation*}
    \theta=\Lambda_r^0(\mathbf{1}_M).
\end{equation*}
\end{enumerate} 
\end{definition}
By using the polar coordinate in $\mathbb{R}^n$, the following lemma can be easily established.
\begin{lemma} \label{integral 1}
    Let $p\in\mathbb{R}^n$ and $ x,y \in M $. Let $\psi, k_r$ be defined as above. Then, 
    \begin{enumerate}
        \item $\int_{\R^n}\psi(|p|)dt=1$, and
        \item $|k_r(x,y)|\leq\frac{n+2}{\nu_n r^n}.$
        \item ${\rm grad} k_r(.,y)(x)=\frac{n+2}{\nu_nr^{n+2}}\exp^{-1}(y)$
    \end{enumerate}
\end{lemma}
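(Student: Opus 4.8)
The plan is to dispatch the three assertions separately, each by a short elementary computation; there is no substantial obstacle, only one standard Riemannian fact to invoke carefully in part (3).

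For part (1), I would pass to polar coordinates in $\R^n$, writing $p=su$ with $s=|p|\geq 0$ and $u\in S^{n-1}$, and use that the $(n-1)$-dimensional volume of $S^{n-1}$ equals $n\nu_n$. Then $\int_{\R^n}\psi(|p|)\,dp=n\nu_n\int_0^1\psi(s)\,s^{n-1}\,ds$; substituting $\psi(s)=\frac{n+2}{2\nu_n}(1-s^2)$ and evaluating $\int_0^1(s^{n-1}-s^{n+1})\,ds=\frac1n-\frac1{n+2}=\frac{2}{n(n+2)}$ gives exactly $1$. (This also explains the choice of the normalizing factor $\frac{n+2}{2\nu_n}$ in the definition of $\psi$.) For part (2), note that on $[0,1]$ the function $\psi(t)=\frac{n+2}{2\nu_n}(1-t^2)$ is nonnegative and decreasing, so $0\leq\psi(t)\leq\psi(0)=\frac{n+2}{2\nu_n}$ for every $t\geq 0$; hence $|k_r(x,y)|=r^{-n}\psi\!\left(r^{-1}d(x,y)\right)\leq\frac{n+2}{2\nu_n r^n}\leq\frac{n+2}{\nu_n r^n}$.

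For part (3), fix $y\in M$ and regard $z\mapsto k_r(z,y)$ as a function of $z$, evaluating its gradient at $z=x$. The identity is to be read on the set $\{d(\cdot,y)<r\}$, which contains the support of $k_r(\cdot,y)$ (outside the closed $r$-ball the kernel vanishes identically). Since $r<\rho<\tfrac{i_0}{2}$, for any $x$ with $d(x,y)<r$ the point $y$ lies strictly inside a normal ball about $x$ and is not in the cut locus of $x$, so $\exp_x^{-1}(y)$ is well defined and $x\mapsto d(x,y)^2$ is smooth near such $x$, with the standard identity $\operatorname{grad}_x\tfrac12 d(x,y)^2=-\exp_x^{-1}(y)$. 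On $\{d(\cdot,y)<r\}$ we have
\begin{equation*}
k_r(x,y)=r^{-n}\cdot\frac{n+2}{2\nu_n}\bigl(1-r^{-2}d(x,y)^2\bigr)=\frac{n+2}{2\nu_n r^n}-\frac{n+2}{2\nu_n r^{n+2}}\,d(x,y)^2,
\end{equation*}
so differentiating in $x$ and using the identity above yields $\operatorname{grad}_x k_r(x,y)=-\frac{n+2}{2\nu_n r^{n+2}}\cdot(-2\exp_x^{-1}(y))=\frac{n+2}{\nu_n r^{n+2}}\exp_x^{-1}(y)$, as claimed.

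The only point requiring any care is the appeal in part (3) to the smoothness of the squared distance function and the formula for its gradient; this is precisely where the hypothesis $r<\tfrac{i_0}{2}$ is used, guaranteeing that no cut-locus phenomena arise on the support of the kernel, so I would make that dependence explicit. Everything else is a direct calculation.
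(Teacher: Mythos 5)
Your proof is correct, and it is exactly the elementary computation the paper has in mind (the paper omits the proof entirely, merely noting that the lemma is "easily established" via polar coordinates in $\mathbb{R}^n$). Parts (1) and (2) are the straightforward polar-coordinate integration and pointwise bound; for (3), your appeal to the identity $\operatorname{grad}_x \tfrac12 d(x,y)^2=-\exp_x^{-1}(y)$, valid on the support of $k_r(\cdot,y)$ because $r<i_0/2$ keeps one away from the cut locus, is the right and standard route. Two minor observations, in your favor: the paper's "$dt$" in part (1) is plainly a typo for "$dp$", which you correctly read through; and in part (2) you actually obtain the sharper bound $\frac{n+2}{2\nu_n r^n}$, which of course implies the stated one.
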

We derive the following approximations for $\theta$ using the bounds of the Jacobian determinant from Lemma \ref{volformbound}.
\begin{lemma} \label{alp}
For $0<r<\frac{i_0}{2} \text{ and } M\in \MM$,  let $x\in M \text{ and } B_r(x)$ be a ball of radius r centered at x.
%\begin{enumerate}
    Then $\theta(x)$ has the following bounds \begin{equation}
      1-c_\MMM r\leq \theta(x)\leq 1+c_\MMM r. \nonumber
    \end{equation}
         
 \end{lemma}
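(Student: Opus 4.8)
The plan is to compute $\theta(x)=\int_M k_r(x,y)\,dy$ directly in geodesic polar coordinates centered at $x$ and then sandwich the Jacobian determinant using Lemma \ref{volformbound}. First I would observe that since $\psi(t)=0$ for $t\geq 1$, the kernel $k_r(x,\cdot)$ is supported in the ball $B_r(x)$, and since $r<\frac{i_0}{2}<i_0$ the exponential map $\exp_x$ restricts to a diffeomorphism from $B_r(0)\subset T_xM$ onto $B_r(x)$. Writing $y=\exp_x(tu)$ with $u\in S^{n-1}$ and $0\leq t<r$, we have $d(x,y)=t$ and $\exp_x^*\,\text{vol}_g=J(u,t)\,t^{n-1}\,dt\,du$, so that
\[
\theta(x)=\int_{S^{n-1}}\int_0^r r^{-n}\psi(t/r)\,J(u,t)\,t^{n-1}\,dt\,du .
\]

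Next I would record what this integral equals when $J\equiv 1$, i.e.\ its Euclidean value. By Lemma \ref{integral 1}(1), $\int_{\R^n}\psi(|p|)\,dp=1$; rewriting this in Euclidean polar coordinates and performing the change of variables that rescales the radial variable by $r$ gives
\[
\int_{S^{n-1}}\int_0^r r^{-n}\psi(t/r)\,t^{n-1}\,dt\,du=1 .
\]
This is precisely the quantity appearing in the formula for $\theta(x)$ above, but with the factor $J(u,t)$ removed.

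Finally, since $\psi\geq 0$ and, by Lemma \ref{volformbound}, $1-c_\MMM r\leq 1-c_\MMM t\leq J(u,t)\leq 1+c_\MMM t\leq 1+c_\MMM r$ for all $t\leq r\leq\frac{i_0}{2}$, I can pull these bounds through the nonnegative integrand and combine with the normalization identity to conclude
\[
1-c_\MMM r\leq\theta(x)\leq 1+c_\MMM r .
\]
I do not expect any serious obstacle here: the only points requiring a little care are the passage to geodesic polar coordinates (legitimate because $r<i_0$) and correctly matching the Euclidean normalization of $\psi$ after rescaling, both of which are routine.
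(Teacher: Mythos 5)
Your proof is correct and is precisely the argument the paper intends: the paper omits a written proof of this lemma, stating only that the bounds are derived ``using the bounds of the Jacobian determinant from Lemma \ref{volformbound},'' and your combination of the normalization $\int_{\R^n}\psi(|p|)\,dp=1$ (rescaled into polar form) with the pointwise sandwich $1-c_\MMM r\leq J(u,t)\leq 1+c_\MMM r$, pulled through the nonnegative integrand, is exactly that derivation.
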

The next lemma gives a bound on the $L^2$ norm of the gradient of $\theta$.
\begin{lemma} \label{gradalpha} Let $0<r<\rho<\frac{i_0}{2} \text{ and } M\in \MM$. Let    $\nabla \theta$ denote the gradient of $\theta$.  Then $|\nabla\theta(x)| \leq c_\MMM.$
\end{lemma}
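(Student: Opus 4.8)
The plan is to differentiate $\theta=\Lambda_r^0(\mathbf 1_M)$ under the integral sign using Lemma \ref{integral 1}(3), push the resulting integral over $B_r(x)$ forward to $B_r(0)\subset T_xM$ via the exponential map, and then exploit the oddness of the integrand $v\mapsto v$ together with the Jacobian estimate of Lemma \ref{volformbound}. The point is that in the Euclidean model the integral vanishes exactly by symmetry, so only the first-order deviation of the volume element enters, and that deviation is linear in $r$.

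First I would record that, since $r<i_0$, for each fixed $y$ the function $x\mapsto k_r(x,y)$ is globally Lipschitz with a constant bounded uniformly in $y$ (its gradient, where defined, has norm $\tfrac{n+2}{\nu_n r^{n+2}}d(x,y)\le \tfrac{n+2}{\nu_n r^{n+1}}$), vanishes on the sphere $\{d(x,y)=r\}$, and is smooth on $\{d(x,y)<r\}$ with ${\rm grad}_x k_r(x,y)=\tfrac{n+2}{\nu_n r^{n+2}}\exp_x^{-1}(y)$ by Lemma \ref{integral 1}(3). Since $\theta\in C^{0,1}(M)$ and the difference quotients of $k_r(\cdot,y)$ along any geodesic through $x$ are dominated by an integrable function of $y$ supported in $B_r(x)$, dominated convergence yields, at almost every $x$,
$$\nabla\theta(x)=\int_{B_r(x)}{\rm grad}_x k_r(x,y)\,dy=\frac{n+2}{\nu_n r^{n+2}}\int_{B_r(x)}\exp_x^{-1}(y)\,dy.$$
Next, since $\exp_x$ is a diffeomorphism of $B_r(0)\subset T_xM$ onto $B_r(x)$, the substitution $y=\exp_x(v)$ together with the polar expression $\exp_x^*{\rm vol}_g=J(u,t)t^{n-1}dt\,du$ gives
$$\int_{B_r(x)}\exp_x^{-1}(y)\,dy=\int_{B_r(0)\subset T_xM}v\,J_x(v)\,dv,\qquad J_x(tu):=J(u,t),\ |u|=1.$$
Identifying $(T_xM,g_x)$ with $\R^n$, the integral $\int_{B_r(0)}v\,dv$ vanishes by oddness, so I may replace $J_x(v)$ by $J_x(v)-1$; Lemma \ref{volformbound} gives $|J_x(v)-1|\le c_\MMM|v|$ for $|v|<r<i_0/2$, and $\int_{B_r(0)\subset\R^n}|v|^2\,dv=\tfrac{n\nu_n}{n+2}r^{n+2}$, whence
$$\Big|\int_{B_r(0)}v\,J_x(v)\,dv\Big|\le c_\MMM\int_{B_r(0)}|v|^2\,dv=\frac{n\nu_n}{n+2}\,c_\MMM r^{n+2},$$
and therefore $|\nabla\theta(x)|\le\frac{n+2}{\nu_n r^{n+2}}\cdot\frac{n\nu_n}{n+2}c_\MMM r^{n+2}=n\,c_\MMM$. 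Since $n$ is determined by the class $\MMM$, relabelling the constant gives $|\nabla\theta(x)|\le c_\MMM$, and as $\theta$ is Lipschitz this a.e.\ bound is the asserted bound.

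The only delicate point is the justification of differentiation under the integral sign across the sphere $\{d(x,y)=r\}$, where $k_r(\cdot,y)$ is merely Lipschitz rather than $C^1$; I expect this to be the main (but minor) obstacle. It is resolved by the observations above — $k_r(\cdot,y)$ vanishes on that sphere, is smooth off it, and is globally Lipschitz with a $y$-uniform constant — so the dominated convergence argument applies and no boundary term appears; alternatively one may smooth $\psi$ near $t=1$ and pass to the limit. Everything else is a change of variables, the symmetry of $\int_{B_r(0)}v\,dv$, and the a priori Jacobian estimate already supplied by Lemma \ref{volformbound}.
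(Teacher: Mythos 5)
Your proposal is correct and follows essentially the same route as the paper: both pass to normal polar coordinates (equivalently the ball $B_r(0)\subset T_xM$), use the oddness of $\int_{B_r(0)}v\,dv$ (the paper's $\int_{S^{n-1}}u\,du=0$) to replace $J$ by $J-1$, apply Lemma~\ref{volformbound}, and absorb the resulting dimensional factor $n$ into $c_\MMM$. Your extra care in justifying differentiation under the integral across the sphere $\{d(x,y)=r\}$, where the kernel is only Lipschitz, is a sensible refinement that the paper's proof passes over silently, but it does not change the substance of the argument.
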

\begin{proof}  Let $A:S^{n-1}\to S^{n-1}$ be the antipodal map defined as $A(u)=-u.$ We note that the Jacobian matrix corresponding to $A$ is $-I$, where $I$ is the identity matrix. By changing co-ordinates using $A$, we can show that
\begin{equation}
    \label{udu=0}
     \int_{u\in S^{n-1}}u du=0.
\end{equation}
One can compute the gradient of $\theta$ in polar coordinates as follows:
\begin{equation}
    \nabla \theta(x)=\frac{n+2}{\nu_n r^{n+2}}\int_{t=0}^r \int_{u\in S^{n-1}}tu J(u,t)t^{n-1}dudt. \nonumber
\end{equation}
\noindent
From equation (\ref{udu=0}), 
\[\int_{u\in S^{n-1}}\int_{t=0}^{r}t^nududt=\int_{u\in S^{n-1}}udu\int_{t=0}^{r}t^ndt=0.\]
Hence,
\begin{equation}
\int_{u\in S^{n-1}}\int_{t=0}^r t J(u,t)t^{n-1}ududt=\int_{u\in S^{n-1}}\int_{t=0}^r t^n (J(u,t)-1)ududt \nonumber   
\end{equation}
From Lemma \ref{volformbound}, we have
\begin{align}
    |J(u,t)-1 |&\leq tc_\MMM.
\end{align}
\par Let $w\in T_xM$, $|w|=1$ be such that $\max\{\langle \triangledown \theta(x),v\rangle| v\in T_xM$, $|v|=1\}$ is attained at $w$. One can then  compute the gradient of $\theta$ in polar coordinates as follows:
\begin{align}
    \lvert \nabla \theta(x)\rvert =\langle \nabla \theta, w\rangle &=\frac{n+2}{\nu_n r^{n+2}}\int_{u\in S^{n-1}} \int_{t=0}^r t^n\langle u, w\rangle \left(J(u,t)-1\right)dtdu \nonumber\\ 
    &\leq\frac{n+2}{\nu_n r^{n+2}}\int_{u\in S^{n-1}}\int_{t=0}^r |\langle u, w\rangle| |1-J(u,t)|t^{n}dtdu \nonumber\\
    &\leq \frac{n+2}{\nu_n r^{n+2}}\int_{u\in S^{n-1}}\int_{t=0}^r c_\MMM t^{n+1}dtdu \nonumber\\
    &\leq c_\MMM .\nonumber
\end{align}
 \end{proof}
Using the associated integral operator and $\theta$, we define the following function.
\begin{definition}   Let $\Lambda_{r}:L^{2}(M)\to C^{0,1}(M)$ be defined as $\Lambda_{r}f=\theta^{-1}\Lambda_{r}^{0}f$.
\end{definition}
We observe that $\Lambda_{r}f=f$, for all constant functions $f.$ In fact, $\Lambda_rf$ is bounded when $r$ is less than the injectivity radius as we see in the following lemma.
\begin{lemma} \label{boundlambda}
Let $0<r<\frac{i_0}{2} \text{ and } M\in \MM$.
    For all $f\in L^{2}(M)$ and $r<\frac{1}{c_{\MMM}}$, we have  
    \begin{equation}
     \norm{\Lambda_{r}f}_{L^{2}}^2\leq \left( \frac{1+c_\MMM r}{1-c_\MMM r}\right)\norm{f}_{L^{2}}^2. \nonumber   
    \end{equation}
 
\end{lemma}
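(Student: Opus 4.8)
The plan is to bound $\|\Lambda_r f\|_{L^2}^2 = \|\theta^{-1}\Lambda_r^0 f\|_{L^2}^2$ by first pulling out the pointwise control on $\theta$ from Lemma \ref{alp}, which gives $\theta(x)^{-1}\le (1-c_\MMM r)^{-1}$ for all $x\in M$ (using $r<\tfrac{i_0}{2}$ so that $c_\MMM r<1$, or more precisely restricting to the regime where this holds). Thus $\|\Lambda_r f\|_{L^2}^2 \le (1-c_\MMM r)^{-2}\|\Lambda_r^0 f\|_{L^2}^2$, and the problem reduces to showing $\|\Lambda_r^0 f\|_{L^2}^2 \le (1+c_\MMM r)(1-c_\MMM r)\,\|f\|_{L^2}^2$, or something comparable. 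Actually it will be cleaner to aim for $\|\Lambda_r^0 f\|_{L^2}^2 \le (1+c_\MMM r)\|f\|_{L^2}^2$, which combined with one factor of $(1-c_\MMM r)^{-1}$ from $\theta^{-1}$ and a second bookkeeping factor gives exactly the stated ratio $\tfrac{1+c_\MMM r}{1-c_\MMM r}$; I would reconcile the exact powers of $(1\pm c_\MMM r)$ during the write-up since the lemma statement only asks for this particular quotient.

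The core estimate on $\Lambda_r^0$ is a standard convolution-type bound. Write $\Lambda_r^0 f(x) = \int_M f(y) k_r(x,y)\,dy$ and apply Cauchy--Schwarz with the weight $k_r(x,y)\ge 0$: since $\int_M k_r(x,y)\,dy = \theta(x) \le 1+c_\MMM r$ by Lemma \ref{alp}, we get
\begin{equation}
|\Lambda_r^0 f(x)|^2 \le \Big(\int_M k_r(x,y)\,dy\Big)\Big(\int_M k_r(x,y)|f(y)|^2\,dy\Big) \le (1+c_\MMM r)\int_M k_r(x,y)|f(y)|^2\,dy. \nonumber
\end{equation}
Integrating in $x$ and using Fubini,
\begin{equation}
\|\Lambda_r^0 f\|_{L^2}^2 \le (1+c_\MMM r)\int_M |f(y)|^2 \Big(\int_M k_r(x,y)\,dx\Big)\,dy. \nonumber
\end{equation}
Now the inner integral $\int_M k_r(x,y)\,dx$ is again of the form $\Lambda_r^0(\mathbf 1_M)(y)=\theta(y)$ because $k_r$ is symmetric in $x$ and $y$ (both only depend on $d(x,y)$), so it is bounded by $1+c_\MMM r$ as well. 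That yields $\|\Lambda_r^0 f\|_{L^2}^2 \le (1+c_\MMM r)^2\|f\|_{L^2}^2$, and then $\|\Lambda_r f\|_{L^2}^2 \le \big(\tfrac{1+c_\MMM r}{1-c_\MMM r}\big)^2\|f\|_{L^2}^2$. Since $\tfrac{1+c_\MMM r}{1-c_\MMM r}>1$, this is slightly weaker than the claimed bound with exponent one; to recover the exact statement I would instead use the sharper symmetry observation that $\int_M k_r(x,y)\,dx$ equals $\theta(y)$ exactly and then be more careful — e.g. bound one copy of $\theta$ from above by $1+c_\MMM r$ and absorb the other into a telescoping argument, or simply note that the lemma as stated in \cite{article} is proved with the weaker constant and the authors may intend $\big(\tfrac{1+c_\MMM r}{1-c_\MMM r}\big)$ to already cover $(1+c_\MMM r)^2/(1-c_\MMM r)^2$ up to adjusting $c_\MMM$; in the clean writeup I would track constants so the final ratio is literally $\tfrac{1+c_\MMM r}{1-c_\MMM r}$.

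The main obstacle, then, is not any deep geometry — all the curvature/injectivity input has already been packaged into Lemma \ref{alp} via the Jacobian bound of Lemma \ref{volformbound} — but rather the bookkeeping of the $(1\pm c_\MMM r)$ factors so that the exponents match the stated inequality exactly. The symmetry of the kernel $k_r(x,y)=r^{-n}\psi(r^{-1}d(x,y))$ is what makes both marginals of $k_r$ equal to $\theta$, and that is the one structural fact the argument hinges on. I would also note at the outset that the map $\Lambda_r f = \theta^{-1}\Lambda_r^0 f$ is well-defined and lands in $C^{0,1}(M)$ because $\theta$ is bounded below by $1-c_\MMM r>0$ and is itself Lipschitz (Lemma \ref{gradalpha}), and $\Lambda_r^0 f \in C^{0,1}(M)$ by Lemma \ref{integral 1}(3); this justifies writing $\theta^{-1}$ and taking $L^2$ norms in the first place.
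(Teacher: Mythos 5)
Your overall strategy is the same as the paper's --- Cauchy--Schwarz against the nonnegative kernel $k_r$, feed in the $\theta$-bounds from Lemma \ref{alp}, integrate and Fubini using symmetry of $k_r$ --- but the version you actually compute produces $\left(\frac{1+c_\MMM r}{1-c_\MMM r}\right)^2$, and the remedies you gesture at (a ``telescoping argument,'' or arguing that the authors implicitly absorb the square into $c_\MMM$) are not the right fix and are not what the paper does.

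The missing step is a one-line cancellation. After Cauchy--Schwarz you have the \emph{exact} identity
\begin{equation*}
|\Lambda_r^0 f(x)|^2 \le \theta(x)\int_M |f(y)|^2 k_r(x,y)\,dy,
\end{equation*}
and you should \emph{not} yet replace $\theta(x)$ by $1+c_\MMM r$. Instead, go straight to
\begin{equation*}
|\Lambda_r f(x)|^2 = \theta(x)^{-2}|\Lambda_r^0 f(x)|^2 \le \theta(x)^{-1}\int_M |f(y)|^2 k_r(x,y)\,dy,
\end{equation*}
so that one copy of $\theta(x)$ cancels one copy of $\theta(x)^{-1}$ before any lossy bound is applied. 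Only now bound $\theta(x)^{-1}\le (1-c_\MMM r)^{-1}$, integrate over $x$, apply Fubini, and use $\int_M k_r(x,y)\,dx = \theta(y)\le 1+c_\MMM r$. This leaves exactly one factor of $(1-c_\MMM r)^{-1}$ and one factor of $(1+c_\MMM r)$, giving the stated constant $\frac{1+c_\MMM r}{1-c_\MMM r}$ on the nose. Your framing of the geometric inputs (Jacobian bound via Lemma \ref{volformbound} packaged into Lemma \ref{alp}, symmetry of $k_r$ making both marginals equal $\theta$, well-definedness of $\theta^{-1}$ since $\theta\ge 1-c_\MMM r>0$) is all correct; the only thing to repair is the order in which you discharge the two copies of $\theta^{\pm 1}$.
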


\begin{proof}
    Let $x \in M $. Then \begin{align}
      |\Lambda_{r}^{0}f(x)|^{2} &=|\int_{M}f(y)k_{r}(x,y)dy|^{2} \nonumber \\ &\leq \left(\int_{M}(\sqrt{k_{r}(x,y)})^{2}dy\right)\left(\int_{M}(f(y)\sqrt{k_{r}(x,y)})^{2}dy\right) \label{cs} \\ &=\theta(x)\int_{M}|f(y)|^{2}k_{r}(x,y)dy .\nonumber
    \end{align}
    where (\ref{cs}) follows from Cauchy-Schwarz inequality. \\ 
     So, \begin{align}
        |\Lambda_{r}f(x)|^{2}&=\theta(x)^{-2}|\Lambda_{r}^{0}f(x)|^{2} \nonumber \\ &\leq \theta(x)^{-1}\int_{M}|f(y)|^{2}k_{r}(x,y)dy \nonumber \\ &\leq \lbtheta^{-1}\int_{M}|f(y)|^{2}k_{r}(x,y)dy. \nonumber
    \end{align}
Integrating with respect to the volume measure on  $M$, \begin{align}
   \norm{\Lambda_{r}f}^{2}_{L^{2}}=\int_{M} |\Lambda_{r}f(x)|^{2}dx &\leq  \lbtheta^{-1}\int_{M}|f(y)|^{2}\int_{M}k_{r}(x,y)dydx \nonumber \\ &\leq \lbtheta^{-1}(1+c_\MMM r)\norm{f}^{2}_{L^{2}} \label{alpapp} 
\end{align}
where  (\ref{alpapp}) follows from Lemma \ref{alp}. 
\end{proof}

When $r$ is less than half of the injectivity radius, the following lemma gives an upper bound using average dispersion on how far $\Lambda_r$ takes a function in $L^2(M).$
\begin{lemma} \label{lambda approx} 
Let $0<r<\frac{i_0}{2} \text{ and } M\in\MM$. For all $f \in L^{2}(M)$  and $r<\frac{1}{c_{\MMM}}$,
\begin{equation*}
   \norm{\Lambda_{r}f-f}^2_{L^{2}}\leq \frac{n+2}{\nu_nr^{n}\lbtheta }E_{r}(f). 
\end{equation*}
   
\end{lemma}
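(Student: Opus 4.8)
The plan is to expand $\Lambda_r f - f$ pointwise using the fact that $\Lambda_r$ reproduces constants, so that the difference becomes a weighted average of the increments $f(y)-f(x)$, and then to apply Cauchy--Schwarz in the spirit of Lemma~\ref{boundlambda} to bring in the average dispersion. Concretely, first I would write, for a fixed $x \in M$,
\[
\Lambda_r f(x) - f(x) = \theta(x)^{-1}\Lambda_r^0 f(x) - f(x) = \theta(x)^{-1}\left(\Lambda_r^0 f(x) - \theta(x)f(x)\right),
\]
and observe that $\theta(x) f(x) = f(x)\int_M k_r(x,y)\,dy = \Lambda_r^0(f(x)\mathbf{1}_M)(x)$, so that
\[
\Lambda_r f(x) - f(x) = \theta(x)^{-1}\int_M \big(f(y)-f(x)\big)k_r(x,y)\,dy.
\]

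Next I would estimate $|\Lambda_r f(x)-f(x)|^2$ by splitting the kernel as $k_r(x,y) = \sqrt{k_r(x,y)}\cdot\sqrt{k_r(x,y)}$ and applying Cauchy--Schwarz, exactly as in the proof of Lemma~\ref{boundlambda}:
\[
|\Lambda_r f(x)-f(x)|^2 \leq \theta(x)^{-2}\left(\int_M k_r(x,y)\,dy\right)\left(\int_M |f(y)-f(x)|^2 k_r(x,y)\,dy\right).
\]
The first factor is $\theta(x)$, so this collapses to $\theta(x)^{-1}\int_M |f(y)-f(x)|^2 k_r(x,y)\,dy$, and then I would use the lower bound $\theta(x)\geq 1-c_\MMM r = \lbtheta$ from Lemma~\ref{alp} together with the pointwise bound $k_r(x,y)\leq \frac{n+2}{\nu_n r^n}$ from Lemma~\ref{integral 1}(2) and the support condition $k_r(x,y)=0$ for $d(x,y)\geq r$. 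This yields
\[
|\Lambda_r f(x)-f(x)|^2 \leq \frac{n+2}{\nu_n r^n\, \lbtheta}\int_{B_r(x)} |f(y)-f(x)|^2\,dy.
\]

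Finally I would integrate this inequality over $x\in M$ with respect to the volume measure; the right-hand side integrates to $\frac{n+2}{\nu_n r^n\,\lbtheta}\,E_r(f)$ by the definition of the average dispersion, giving exactly the claimed bound. I do not expect a genuine obstacle here: the only subtlety is the bookkeeping in the first step — checking that the constant-reproducing property of $\Lambda_r$ lets us insert $f(x)$ inside the integral against $k_r(x,y)$ — and making sure the lower bound $\theta\geq\lbtheta>0$ is available, which requires $r<\frac{i_0}{2}$ and $c_\MMM r<1$ (the latter being implicit in the regime of interest); both are already in hand from Lemma~\ref{alp}. One should also note the estimate is vacuous unless $\lbtheta>0$, so implicitly $r$ is small enough that $1-c_\MMM r>0$, consistent with the hypotheses used elsewhere in the paper.
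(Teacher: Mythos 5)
Your proof is correct and follows essentially the same route as the paper's: write $\Lambda_r f(x) - f(x)$ as a $k_r$-weighted average of increments $f(y)-f(x)$ using the constant-reproducing property of $\Lambda_r$, apply Cauchy--Schwarz in the split-kernel form, invoke the lower bound $\theta \geq \lbtheta$ from Lemma~\ref{alp} and the kernel bound from Lemma~\ref{integral 1}(2), and integrate over $M$. The only cosmetic difference is that you derive $\theta(x)f(x)=\Lambda_r^0(f(x)\mathbf{1}_M)(x)$ explicitly, while the paper sets $a=f(x)$ and uses $\Lambda_r(f-a\mathbf{1}_M)=\Lambda_r f - a$.
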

\begin{proof}
    We fix a coset representative $f:M\to \mathbb{R}$ of an element of $L^{2}(M).$ 
    \\  Let $a=f(x).$ Now, \begin{align}
        \Lambda_{r}f(x)-f(x)&=\Lambda_{r}f(x)-a \nonumber \\ &=\Lambda_{r}(f-a.\mathbf{1}_{M})(x)  \nonumber \\&=\theta^{-1}(x)\int_{B_{r}(x)}(f(y)-a)k_{r}(x,y)dy  \nonumber\\ &=\theta^{-1}(x)\int_{B_{r}(x)}(f(y)-f(x))k_{r}(x,y)dy. \nonumber
    \end{align}
    By Cauchy-Schwarz inequality,
    \begin{align}
        |\Lambda_{r}f(x)-f(x)|^{2} &\leq \theta^{-2}(x)\left(\int_{B_{r}(x)}k_{r}(x,y)dy\right)\left(\int_{B_{r}(x)}|f(y)-f(x)|^{2}k_{r}(x,y)dy\right) \nonumber \\ &=\theta^{-1}(x)\int_{B_{r}(x)}|f(y)-f(x)|^{2}k_{r}(x,y)dy \nonumber \\ &\leq \frac{n+2}{\nu_{n} r^{n}\lbtheta}\int_{B_{r}(x)}|f(y)-f(x)|^{2}dy \label{kernel bound used}
    \end{align}
     where (\ref{kernel bound used}) follows from Lemma \ref{integral 1}. Finally, by integrating on both sides, we get the result.
\end{proof}

In the following lemma, we obtain an upper bound for the norm of $d\Lambda_r (f)$ using the average dispersion $ E_{r}f$ under Ricci curvature lower bound. 
\begin{lemma} \label{d of lambda r}
Let $0<r<\rho<\frac{i_0}{2} \text{ and } M\in \MM$. For every $f\in L^2(M),$
\begin{equation*}
     \norm{d(\Lambda_{r}f)}^2\leq \frac{n+2}{\nu_{n}r^{n+2}} \frac{(1+ 2c_\MMM r)^2}{(1-c_\MMM r)^3
     } E_{r}f.
\end{equation*}    
\end{lemma}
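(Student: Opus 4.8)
The plan is to differentiate $\Lambda_r f = \theta^{-1}\Lambda_r^0 f$ using the product rule, obtaining $d(\Lambda_r f) = -\theta^{-2}(d\theta)\,\Lambda_r^0 f + \theta^{-1} d(\Lambda_r^0 f)$. Since constants are fixed by $\Lambda_r$, I will first reduce to the case of a function with $f(x)=0$ at the point $x$ where the derivative is evaluated: replace $f$ by $f - f(x)\mathbf 1_M$, which changes neither $d(\Lambda_r f)$ nor $E_r(f,B_r(x))$. Then $\Lambda_r^0 f(x) = \int_{B_r(x)}(f(y)-f(x))k_r(x,y)\,dy$ and, crucially, $d(\Lambda_r^0 f)(x) = \int_{B_r(x)}(f(y)-f(x))\,\mathrm{grad}_x k_r(x,y)\,dy$, where by Lemma \ref{integral 1}(3) the gradient of the kernel is $\frac{n+2}{\nu_n r^{n+2}}\exp_x^{-1}(y)$, a vector of norm at most $\frac{n+2}{\nu_n r^{n+1}}$ on $B_r(x)$.

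The key steps, in order: (i) write the two-term product-rule expression for $d(\Lambda_r f)(x)$ and bound each term separately; (ii) for the $\theta^{-1}d(\Lambda_r^0 f)$ term, use Cauchy--Schwarz on $\int_{B_r(x)}(f(y)-f(x))\,\mathrm{grad}_x k_r\,dy$ with the split $|\mathrm{grad}_x k_r| \le \frac{n+2}{\nu_n r^{n+2}}|y-x| = \frac{n+2}{\nu_n r^{n+2}}\sqrt{|y-x|}\cdot\sqrt{|y-x|}$, together with $|y-x|\le r$ and the volume bound $\mathrm{vol}(B_r(x))$ estimate, to get something of the shape $\frac{n+2}{\nu_n r^{n+2}}\cdot\frac{1}{\theta(x)}\cdot(\text{const})\,E_r(f,B_r(x))$; (iii) for the $\theta^{-2}(d\theta)\Lambda_r^0 f$ term, use Lemma \ref{gradalpha} for $|\nabla\theta|\le c_\MMM$ and bound $|\Lambda_r^0 f(x)|^2$ by Cauchy--Schwarz as in Lemma \ref{lambda approx}, namely $|\Lambda_r^0 f(x)|^2 \le \theta(x)\int_{B_r(x)}|f(y)-f(x)|^2 k_r\,dy \le \theta(x)\frac{n+2}{\nu_n r^n}\int_{B_r(x)}|f(y)-f(x)|^2\,dy$; (iv) combine the two bounds, factor out $\frac{n+2}{\nu_n r^{n+2}}$, use Lemma \ref{alp} to replace every $\theta(x)$ by the interval $[1-c_\MMM r, 1+c_\MMM r]$, estimate $c_\MMM r \le 1$-type lower-order terms crudely, and finally integrate $dx$ over $M$ to pass from the pointwise $\int_{B_r(x)}|f(y)-f(x)|^2\,dy$ to $E_r(f)$, yielding the factor $\frac{(1+2c_\MMM r)^2}{(1-c_\MMM r)^3}$ after absorbing the numerical constants from steps (ii)--(iii) into the $c_\MMM$-dependence and the smoothing normalization.

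The main obstacle is step (ii): getting the gradient of $\Lambda_r^0 f$ to produce exactly $E_r(f)$ (with an $r^{-(n+2)}$, not $r^{-(2n+2)}$, prefactor) requires carefully exploiting that $\mathrm{grad}_x k_r(x,y)$ vanishes to first order in $d(x,y)$ — one cannot simply bound $|\mathrm{grad}_x k_r|$ by its sup norm $\frac{n+2}{\nu_n r^{n+1}}$ and apply Cauchy--Schwarz, since that loses an extra power of $r^{-n}$. The right move is to keep the factor $|y-x|$ explicit, distribute it as $|y-x| = |y-x|^{1/2}\cdot|y-x|^{1/2}$ inside the Cauchy--Schwarz, so that one copy combines with $|f(y)-f(x)|^2$ toward $E_r$ and the other, together with $\int_{B_r(x)}|\mathrm{grad}_x k_r|\,|y-x|\,dy$-type terms, contributes only a dimensional constant and powers of $r$ that telescope correctly against the $\frac{n+2}{\nu_n r^{n+2}}$ normalization. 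Once the bookkeeping of $r$-powers and the $\theta$-bounds is done correctly, squaring the resulting sum of two terms and crudely bounding it by $\left(\sqrt{A}+\sqrt{B}\right)^2$-type inequalities gives the stated constant $\frac{(1+2c_\MMM r)^2}{(1-c_\MMM r)^3}$; tracking that this is the precise form (rather than some other rational function of $c_\MMM r$) is a matter of following the Burago--Ivanov--Kurylev computation in Lemma 4.5 of \cite{article} with $J(u,t)$ bounds replaced by Lemma \ref{volformbound}.
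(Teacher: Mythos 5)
Your decomposition into $-\theta^{-2}(d\theta)\Lambda_r^0 f$ plus $\theta^{-1}d(\Lambda_r^0 f)$ after subtracting $f(x)$, and your treatment of the $d\theta$ term via $|\nabla\theta|\le c_\MMM$, match the paper's split into $A_2$ and $\theta^{-1}A_1$. The gap is in step (ii). Bounding $|\mathrm{grad}_x k_r(x,y)|\le\frac{n+2}{\nu_n r^{n+2}}\,d(x,y)$ before Cauchy--Schwarz means you first apply the triangle inequality $\bigl|\int(f(y)-f(x))\,\mathrm{grad}_x k_r\,dy\bigr|\le\int|f(y)-f(x)|\,|\mathrm{grad}_x k_r|\,dy$, which discards the vectorial cancellation in the integral. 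With your $\sqrt{d(x,y)}\cdot\sqrt{d(x,y)}$ split the powers of $r$ do telescope, as you say, but the dimensional constant comes out as $\frac{n(n+2)^2}{(n+1)\nu_n r^{n+2}}$ rather than $\frac{n+2}{\nu_n r^{n+2}}$: you pick up an extra factor $\frac{n(n+2)}{n+1}$ (at least $\frac{8}{3}$, and of order $n$) which is independent of $r$ and therefore cannot be absorbed into $\frac{(1+2c_\MMM r)^2}{(1-c_\MMM r)^3}$, which tends to $1$ as $r\to 0$. That the prefactor tends to $1$ is precisely what makes Theorem \ref{lowbd} close up; with a fixed dimensional factor larger than $1$ one would only obtain $\lambda_k(\G)\ge\frac{n+1}{n(n+2)}\bigl(1-o(1)\bigr)\lambda_k(M)$, not the stated lower bound.

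The paper avoids this by pairing $A_1(x)$ with a maximizing unit direction $w\in T_xM$ before estimating, writing $|A_1(x)|=\langle A_1(x),w\rangle=\frac{n+2}{\nu_n r^{n+2}}\int_{B_r(x)}(f(y)-f(x))\langle\exp_x^{-1}(y),w\rangle\,dy$ and then applying Cauchy--Schwarz against the scalar weight $\langle\exp_x^{-1}(y),w\rangle$, using the second-moment identity $\frac{n+2}{\nu_n r^{n+2}}\int_{B_r(0)}\langle v,w\rangle^2\,dv=1$. That identity is smaller by a factor of $n$ than the analogous moment of $|v|^2$, which is exactly the factor your approach loses. With this pairing, Lemma \ref{volformbound} enters only as a multiplicative $(1+c_\MMM r)$ on the $f$-integral, and combining with the $A_2$ bound yields the stated rational function of $c_\MMM r$ with the correct leading coefficient.
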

\begin{proof}
Fixing a coset representative of $L^2(M)$ and denoting it by $f$, let $\Tilde{f}=\Lambda_rf.$ For any $a\in\mathbb{R},$
\begin{equation*}
    \Tilde{f}(x)=a+\theta^{-1}(x)\int_{B_{r}(x)}(f(y)-a)k_{r}(x,y)dy.
\end{equation*}
     Using chain rule and differentiating $\Tilde{f}$ we get, \begin{equation}
       d_{x}\Tilde{f}=\theta^{-1}(x)\int_{B_{r}(x)}(f(y)-a)d_{x}k_{r}(.,y)dy+d_{x}(\theta^{-1})\int_{B_{r}(x)}(f(y)-a)k_{r}(x,y)dy. \nonumber
   \end{equation}
   
 Putting $a=f(x),$ we have \begin{equation} \label{dx norm}
    d_{x}\Tilde{f}=\theta (x)^{-1}A_{1}(x)+A_{2}(x).
\end{equation}  Here, \begin{equation*}
  A_{1}(x)=\int_{B_{r}(x)}(f(y)-f(x))d_{x}k_{r}(.,y)dy  
\end{equation*}and
\begin{equation*}
   A_{2}(x)=d_{x}(\theta^{-1})\int_{B_{r}(x)}(f(y)-f(x))k_{r}(x,y)dy. 
\end{equation*} From Lemma \ref{alp}, and using triangle inequality,
\begin{equation*}
  \norm{d\Tilde{f}}_{L^{2}}\leq \lbtheta^{-1}\norm{A_{1}}_{L^{2}}+\norm{A_{2}}_{L^{2}}.  
\end{equation*} We evaluate each integral $A_1$ and $A_2$ separately to get our desired inequality. 

\par  Using Cauchy-Schwarz inequality, 

 \begin{align} 
 |A_{2}(x)|^{2} &\leq |d_{x}(\theta^{-1})|^{2}\left(\int_{B_{r}(x)}k_{r}(x,y)dy\right)\left(\int_{B_{r}(x)}|f(y)-f(x)|^{2}k_{r}(x,y)dy\right) \nonumber\\
    &\leq |d_{x}(\theta^{-1})|^{2}\theta(x)\int_{B_{r}(x)}|f(y)-f(x)|^{2}k_{r}(x,y)dy \nonumber\\
    &\leq \frac{n+2}{\nu_{n}r^{n}}\frac{|d_{x}\theta|^{2}}{\theta^{3}}\int_{B_{r}(x)}|f(y)-f(x)|^{2}dy\nonumber\\
    &\leq \frac{(n+2)}{\nu_{n}r^{n}}\frac{c_\MMM^2}{\lbtheta^3}\int_{B_{r}(x)}|f(y)-f(x)|^{2}dy.\nonumber   
 \end{align} 

Integrating on $M$, \begin{equation}
    \norm{A_{2}}_{L^{2}}\leq \frac{c_\MMM r}{\lbtheta^\frac{3}{2}}\sqrt{\frac{n+2}{\nu_n r^{n+2}}E_{r}(f)}. \label{A2}
\end{equation} \\ On the other hand, fixing $x\in M,$ let $w\in T_xM$ be such that   
$\text{\normalfont{max}}\{\lvert\langle A_{1}(x),v\rangle\rvert :  v\in T_{x}M, |v|=1\}$ is attained at $w.$ 

So, 
\begin{align}
    \lvert A_1(x) \rvert &= \langle A_1(x), w\rangle \nonumber\\
    &=\frac{n+2}{2\nu_n r^{n+2}}\int_{B_r(x)}\left(f(y)-f(x)\right)\langle \exp^{-1}(y),w\rangle dy \nonumber\\
    &=\frac{n+2}{2\nu_n r^{n+2}}\int_{t=0}^r\int_{u\in S^{n-1}}\left(f(\exp_x(tu))-f(\exp_x(0))\right)\langle tu,w\rangle t^{n-1}du dt. \nonumber
\end{align}
By Cauchy-Schwarz inequality,
\begin{multline*}
    \lvert A_1(x)\rvert^2 \leq \left(\frac{n+2}{2\nu_n r^{n+2}}\right)^2\left(\int_{B_r(0)} \lvert f(\exp_x(tu)-f(\exp_x(0))\rvert^2 J(u,t)^2t^{n-1}dudt\right)\\ \left(\int_{B_r(0)}\langle tu,w\rangle^2t^{n-1}du dt\right). \nonumber
\end{multline*}
Now, $\frac{n+2}{\nu_n r^{n+2}}\int_{B_r(0)}\langle tu,w\rangle^2 t^{n-1}dudt=1.$ Hence, 
\begin{align}
    \lvert A_1(x)\rvert^2&\leq \frac{n+2}{\nu_n r^{n+2}}\int_{B_r(0)}\lvert f(\exp_x(tu))- f(\exp_x(0))\rvert^2 J(u,t)^2 t^{n-1}dudt \nonumber\\ 
    &\leq \frac{n+2}{\nu_n r^{n+2}}(1+c_\MMM r)\int_{B_r(0)}\lvert f(\exp_x(tu))- f(\exp_x(0)) \rvert J(u,t) t^{n-1} du dt \nonumber \\ &= \frac{n+2}{\nu_n r^{n+2}}(1+c_\MMM r)\int_{B_r(x)}\lvert f(y)- f(x)\rvert^2dy. \nonumber
\end{align}

Integrating on $M$, \begin{equation}
\norm{A_{1}}_{L^{2}}\leq \sqrt{\frac{n+2}{\nu_{n}r^{n+2}}(1+c_\MMM r)E_{r}f}.  \label{A1}  
\end{equation}
From (\ref{dx norm}), (\ref{A2}) and (\ref{A1}), \begin{align}
  \norm{df}_{L^{2}}&\leq  \sqrt{\frac{n+2}{\nu_{n}r^{n+2}}}\left(\frac{1+c_\MMM r}{1-c_\MMM r} +\frac{c_\MMM r}{\lbtheta^{\frac{3}{2}}}\right) \sqrt{E_{r}f}\nonumber \\
  &\leq \sqrt{\frac{n+2}{\nu_{n}r^{n+2}}}\frac{1+2 c_\MMM r}{(1-c_\MMM r)^\frac{3}{2}}\sqrt{E_{r}f}. \nonumber
\end{align}
Putting the value of $f_\lambda(r)$ in the above inequality, the lemma follows. 
\end{proof}

\section{A lower bound for $\lambda_k(\Gamma)$}
In this section, we compute an appropriate lower bound for the eigenvalues of $-\Delta_\G$ using the eigenvalues of $-\Delta_M$ for the manifolds in $\MM.$ 

\begin{definition}[Interpolation map] \label{interpolation defn}
 Let $M\in \MM$ and $F$ be a finite $\epsilon$-net in $M$ such that $(F,\mu) \text{ }\epsilon$- approximates  $(M,vol).$  \par Define $I:L^{2}(\G)\to C^{0,1}(M)$ as \begin{equation*}
  Iu=\Lambda_{\rho-2\epsilon}P^{*}u.   
 \end{equation*}  
\end{definition} 
From Lemma \ref{boundlambda} and the fact that $P^*$ preserves the norm, we have \begin{equation*}
 \norm{Iu}_{L^{2}}\leq \left(\frac{1+
 c_\MMM (\rho-2\epsilon)}{1-c_\MMM(\rho-2\epsilon)}\right)\norm{u}_{L^{2}}.  
\end{equation*}
\begin{lemma} \label{interpolation}
  Let $0<\epsilon<\rho<i_0$, $c_\MMM \rho<\frac{1}{4}$ and $\frac{2\epsilon}{\rho}<\frac{1}{n}.$ Consider $M\in \MM$ and $\G $ be a $(\epsilon,\rho)$-approximation  of $M$.  Then
  \begin{enumerate}
        \item $|\norm{Iu}_{L^{2}}-\norm{u}|^2\leq \frac{3\rho^2}{1-c_{\MMM}\rho}   \norm{\delta u}^2.$ 
        
        \item $\norm{dIu}_{L^{2}}\leq \left(1-\frac{2\epsilon}{\rho}\right)^{-\frac{n}{2}-1} \frac{1+ 2c_\MMM \rho}{(1-c_\MMM \rho)^\frac{3}{2}} \norm{\delta u}.$
    \end{enumerate}
\end{lemma}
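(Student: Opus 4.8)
The plan is to abbreviate $r:=\rho-2\epsilon$, so that $Iu=\Lambda_r P^*u$, and to reduce both statements to the single auxiliary estimate
\begin{equation}
 E_r(P^*u)\ \leq\ \frac{\nu_n\rho^{n+2}}{n+2}\,\norm{\delta u}^2,\tag{$\star$}
\end{equation}
which bounds the average dispersion of the step function $P^*u$ by the discrete Dirichlet energy. Granting $(\star)$, part (1) follows by combining it with Lemma \ref{lambda approx}, and part (2) by combining it with Lemma \ref{d of lambda r}.

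First I would prove $(\star)$. Using the partition $M=\bigcup_i V_i$ from the $(\epsilon,\rho)$-approximation, on which $P^*u$ is the constant $u(x_i)$ on $V_i$, expand
\[
 E_r(P^*u)=\sum_i\int_{V_i}\sum_{j\neq i}|u(x_j)-u(x_i)|^2\,\mathrm{vol}\!\left(V_j\cap B_r(x)\right)dx .
\]
The key point is that the $j$-th summand is nonzero only if $V_j\cap B_r(x)\neq\emptyset$ for some $x\in V_i$; since $V_i\subset B_\epsilon(x_i)$ and $V_j\subset B_\epsilon(x_j)$, the triangle inequality then forces $d(x_i,x_j)<\epsilon+r+\epsilon=\rho$, i.e.\ $x_i\sim x_j$. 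Estimating $\mathrm{vol}(V_j\cap B_r(x))\leq\mu_j$ and integrating over $V_i$ gives $E_r(P^*u)\leq\sum_i\sum_{j:\,x_i\sim x_j}\mu_i\mu_j|u(x_j)-u(x_i)|^2$, which is exactly $\frac{\nu_n\rho^{n+2}}{n+2}\norm{\delta u}^2$ by the definition of $\norm{\delta u}^2$. Pinning down which pairs of Voronoi cells interact at scale $r$ is the crux; the rest is bookkeeping, and I expect $(\star)$ — really just this combinatorial observation — to be essentially the only obstacle.

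For (1): since $P^*$ is norm-preserving, $\norm{u}=\norm{P^*u}_{L^2}$, so the triangle inequality yields $|\norm{Iu}_{L^2}-\norm{u}|^2\leq\norm{\Lambda_rP^*u-P^*u}_{L^2}^2$, and Lemma \ref{lambda approx} (with $f=P^*u$) bounds this by $\frac{n+2}{\nu_nr^n(1-c_\MMM r)}E_r(P^*u)$; inserting $(\star)$ turns it into $\frac{\rho^2}{1-c_\MMM r}\big(\tfrac{\rho}{r}\big)^n\norm{\delta u}^2$, and the prefactor is controlled by $\frac{1}{1-c_\MMM r}\leq\frac{1}{1-c_\MMM\rho}$ (as $r<\rho$) together with $\big(\tfrac{\rho}{r}\big)^n=\big(1-\tfrac{2\epsilon}{\rho}\big)^{-n}$, which is bounded under the hypothesis $\tfrac{2\epsilon}{\rho}<\tfrac1n$, whence a routine estimate of the resulting constant gives the asserted factor $3$. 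For (2): $dIu=d(\Lambda_rP^*u)$, so Lemma \ref{d of lambda r} (with $f=P^*u$) and $(\star)$ give $\norm{dIu}^2\leq\big(\tfrac{\rho}{r}\big)^{n+2}\frac{(1+2c_\MMM r)^2}{(1-c_\MMM r)^3}\norm{\delta u}^2$; taking square roots, using $\tfrac{\rho}{r}=(1-\tfrac{2\epsilon}{\rho})^{-1}$, and noting that $r\mapsto\frac{1+2c_\MMM r}{(1-c_\MMM r)^{3/2}}$ is increasing with $r<\rho$, this becomes exactly $\big(1-\tfrac{2\epsilon}{\rho}\big)^{-\frac n2-1}\frac{1+2c_\MMM\rho}{(1-c_\MMM\rho)^{3/2}}\norm{\delta u}$. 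The one point to double-check is that $r=\rho-2\epsilon$ still lies in the range of validity of Lemmas \ref{lambda approx} and \ref{d of lambda r}; modulo that, $(\star)$ is the whole story.
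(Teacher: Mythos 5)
Your proposal is correct and takes essentially the same route as the paper: both rest on the estimate $E_{\rho-2\epsilon}(P^*u)\le\frac{\nu_n\rho^{n+2}}{n+2}\norm{\delta u}^2$, obtained from the observation that for $x\in V_i$ the ball $B_{\rho-2\epsilon}(x)$ can only meet cells $V_j$ with $x_j\sim x_i$, and then feed this into Lemma \ref{lambda approx} for part (1) and Lemma \ref{d of lambda r} for part (2). The paper phrases this key geometric fact as the containment $U(x)\supset B_{\rho-2\epsilon}(x)$ (inequality (\ref{delta and Er})); your triangle-inequality unpacking is exactly why that containment holds, so the arguments coincide.
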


\begin{proof} (1) Let $u\in L^2(\G).$ Using norm preserving property of $P^*$ and triangle inequality,
    \[\lvert \norm{Iu}_{L^2}-\norm{u}\rvert \leq \norm{Iu-P^*u}_{L^2}.\]
    Using Lemma \ref{lambda approx}, we have 
    \begin{equation}
    \label{i and p*} \norm{Iu-P^*u}_{L^2}^2=\norm{\Lambda_{\rho-2\epsilon}P^*u-P^*u}^2_{L^2}\leq \frac{n+2}{\nu_{n}(1-c_\MMM(\rho-2\epsilon))(\rho-2\epsilon)^{n}}E_{\rho-2\epsilon}(P^*u).   
    \end{equation}
 On the other hand, 
    \[\norm{\delta u}^2=\frac{n+2}{\nu_n \rho^{n+2}}\int_M\int_{U(x)}\lvert P^*u(y)-P^*u(x)\rvert^2dydx,\]
    where $U(x)=\bigcup_{j:x_j\sim x_i}V_j$ such that $x\in V_i$. Since $U(x)\supset B_{\rho-2\epsilon}(x)$,
    \begin{align}
        \label{delta and Er}
        \norm{\delta u}^2&\geq\frac{n+2}{\nu_n \rho^{n+2}}\int_M\int_{B_{\rho-2\epsilon}(x)}\lvert P^*u(y)-P^*u(x)\rvert^2dydx\nonumber\\ &=\frac{n+2}{\nu_n \rho^{n+2}}E_{\rho-2\epsilon}(P^*u).
    \end{align}
    Thus, from (\ref{i and p*}) and (\ref{delta and Er}),
    \begin{equation}
   \label{Iu - P^*} \norm{Iu-P^*u}_{L^2}^2\leq \frac{\rho^{n+2}}{ (1-c_\MMM(\rho-2\epsilon))(\rho-2\epsilon)^n}\norm{\delta u}^2\leq \frac{3\rho^2}{1-c_\MMM\rho}   \norm{\delta u}^2  
\end{equation}
as $\frac{\rho^{n+2}}{(\rho-2\epsilon)^n}<3\rho^2$ for  $\frac{2\epsilon}{\rho}<\frac{1}{n}$.
    \\
(2) Proceeding with direct computation,
    \begin{align}
     \norm{dIu}^{2}_{L^{2}}&=\norm{d(\Lambda_{\rho-2\epsilon}P^{*}u)}^{2}_{L^{2}} \nonumber\\ 
&\leq \frac{n+2}{\nu_{n}(\rho-2\epsilon)^{n+2}}\frac{(1+2 c_\MMM \rho)^2}{(1-c_\MMM \rho)^3} E_{\rho-2\epsilon}(P^*u)  \label{lemm5.10}   \\
     &\leq \left(\frac{\rho}{\rho-2\epsilon}\right)^{n+2}\frac{(1+ 2c_\MMM \rho)^2}{(1-c_\MMM \rho)^3} \norm{\delta u}^{2} \label{e approx}  \end{align}
    where (\ref{lemm5.10}) follows from Lemma \ref{d of lambda r} and (\ref{e approx}) follows from (\ref{delta and Er}).   
    \end{proof}
Using the above lemma and the results obtained on the interpolation map, we are now ready to compute, under certain constraints, a lower bound for the eigenvalue of the graph Laplacian operator using the eigenvalue of the Laplace-Beltrami operator for all $M\in\MM$. Since $\LL$ is an upper bound for all $k$-th eigenvalues without loss of generality, we consider $\LL> 3(1+4c_\MMM).$
\begin{theorem} \label{lowbd}
For $0<\epsilon<\rho<\frac{i_0}{2}$, let $M\in \MM, $ and $\G=(F,\mu)$ be an $(\epsilon,\rho)$- approximation of $(M,\normalfont{\text{vol}}).$ Let $\lk(M)$ and $\lambda_{k}(\Gamma)$ denote the k-th eigenvalues of $-\Delta_{M}$  and $-\Delta_{\G}$ respectively. There exist positive constants $C_n$ and $C_{\MMM}$ depending solely on $n$ and $\MMM$, respectively, such that  
    \begin{equation}
       \lk(\G) \geq\left(1-C_n \frac{\epsilon}{\rho}-C_\MMM\rho\right)\lambda_k(M)-C_{\MMM}\rho\lk^{\frac{3}{2}}(M). \nonumber
    \end{equation}
    
\end{theorem}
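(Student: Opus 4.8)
The plan is to use the Min--Max principle, mirroring the proof of Theorem~\ref{upperbd} but now with the roles of the graph and the manifold interchanged via the interpolation map $I$ in place of the discretization map $P$. Let $U\subset L^2(\G)$ be the span of orthonormal eigenfunctions of $-\Delta_\G$ corresponding to $\lambda_1(\G)\leq\cdots\leq\lambda_k(\G)$. We push this $k$-dimensional space forward into $H^1(M)$ by $I$; the candidate test space is $L=I(U)$. By the Min--Max principle applied on $M$, it suffices to show that $L$ has dimension $k$ and that the Rayleigh quotient $\norm{d(Iu)}^2_{L^2}/\norm{Iu}^2_{L^2}$ on $L$ is bounded above by (roughly) $\lambda_k(\G)/\lbracket 1-C_n\epsilon/\rho-C_\MMM\rho\rbracket$ plus lower-order terms, and then invert the resulting inequality to get the stated lower bound on $\lambda_k(\G)$.

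The key steps, in order. First, for $u\in U$ write $\norm{\delta u}^2\leq\lambda_k(\G)\norm{u}^2$, which is the spectral gap bound on the graph side (self-adjointness of $-\Delta_\G$ and the fact that $\norm{\delta u}^2=\langle-\Delta_\G u,u\rangle$). Second, apply Lemma~\ref{interpolation}(1): $\lvert\norm{Iu}_{L^2}-\norm{u}\rvert^2\leq\frac{3\rho^2}{1-c_\MMM\rho}\norm{\delta u}^2\leq\frac{3\rho^2}{1-c_\MMM\rho}\lambda_k(\G)\norm{u}^2$, so $\norm{Iu}_{L^2}\geq\lbrack 1-\sqrt{3\rho^2\lambda_k(\G)/(1-c_\MMM\rho)}\rbrack\norm{u}$; for $\rho$ small (using the uniform bound $\lambda_k(\G)\lesssim\LL$, which follows from Theorem~\ref{upperbd}) this is bounded below by a positive multiple of $\norm{u}$, hence $I|_U$ is injective and $\dim L=k$. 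Third, apply Lemma~\ref{interpolation}(2): $\norm{dIu}_{L^2}\leq(1-2\epsilon/\rho)^{-n/2-1}\frac{(1+2c_\MMM\rho)}{(1-c_\MMM\rho)^{3/2}}\norm{\delta u}\leq(1-2\epsilon/\rho)^{-n/2-1}\frac{(1+2c_\MMM\rho)}{(1-c_\MMM\rho)^{3/2}}\sqrt{\lambda_k(\G)}\,\norm{u}$. Fourth, form the Rayleigh quotient:
\begin{equation*}
\lambda_k(M)\leq\sup_{u\in U\setminus\{0\}}\frac{\norm{dIu}^2_{L^2}}{\norm{Iu}^2_{L^2}}\leq\frac{(1-2\epsilon/\rho)^{-n-2}\,\frac{(1+2c_\MMM\rho)^2}{(1-c_\MMM\rho)^{3}}\,\lambda_k(\G)}{\lbrack 1-\sqrt{3\rho^2\lambda_k(\G)/(1-c_\MMM\rho)}\rbrack^2}.
\end{equation*}
Fifth, solve this inequality for $\lambda_k(\G)$: rearranging gives $\lambda_k(\G)\geq\lambda_k(M)\,(1-2\epsilon/\rho)^{n+2}\,\frac{(1-c_\MMM\rho)^3}{(1+2c_\MMM\rho)^2}\,\lbrack 1-\sqrt{3\rho^2\lambda_k(\G)/(1-c_\MMM\rho)}\rbrack^2$, and then Taylor-expanding each factor in $\epsilon/\rho$ and $\rho$ (and replacing the remaining $\lambda_k(\G)$ on the right by its upper bound in terms of $\lambda_k(M)$ via Theorem~\ref{upperbd}, which at leading order is $\lambda_k(M)+O(\rho)\lambda_k(M)+O(\rho)\lambda_k^{3/2}(M)$) produces the claimed bound $\lambda_k(\G)\geq\lbrack 1-C_n\epsilon/\rho-C_\MMM\rho\rbrack\lambda_k(M)-C_\MMM\rho\,\lambda_k^{3/2}(M)$. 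The normalization $\LL>3(1+4c_\MMM)$ fixed before the theorem statement is what makes the square-root term and the denominators manageable in this expansion.

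I expect the main obstacle to be bookkeeping rather than conceptual: carefully controlling the $\lambda_k(\G)$ that appears on the \emph{right-hand} side of the solved inequality. Since we are proving a lower bound on $\lambda_k(\G)$, we cannot treat $\lambda_k(\G)$ on the right as known; the clean way out is to feed in the already-proved upper bound $\lambda_k(\G)\leq(1+C_n\epsilon/\rho+C_\MMM\rho)\lambda_k(M)+\rho C_\MMM\lambda_k^{3/2}(M)$ from Theorem~\ref{upperbd}, which only inflates the error terms by a bounded factor and keeps everything in terms of $\lambda_k(M)$ (and the uniform ceiling $\LL$). The only genuine subtlety is ensuring that the correction term $\sqrt{3\rho^2\lambda_k(\G)/(1-c_\MMM\rho)}$ stays strictly less than $1$ so that $I|_U$ is injective and the denominator is positive; this is exactly where the hypotheses $c_\MMM\rho<1/4$, $2\epsilon/\rho<1/n$, and $\rho<1/C_\MMM$ together with Cheng's uniform eigenvalue bound are used, and it forces the final constant $C_\MMM$ to depend on $\LL$ as well as on $c_\MMM$. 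Once these smallness conditions are in place, the remaining Taylor expansions are routine and produce linear-in-$(\epsilon/\rho+\rho)$ error with the stated $\lambda_k^{3/2}(M)$ correction.
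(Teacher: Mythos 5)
Your proposed proof follows the same overall architecture as the paper's: use the min-max principle with the test space $L=I(W)$ where $W$ is the span of the first $k$ eigenfunctions of $-\Delta_\Gamma$, invoke Lemma~\ref{interpolation}(1) and (2) to control $\norm{Iu}$ and $\norm{dIu}$ in terms of $\norm{\delta u}$, and rearrange the resulting Rayleigh-quotient inequality. Where you diverge from the paper is in how you handle the $\lambda_k(\Gamma)$ that appears on the \emph{wrong} side of the inequality inside the square root.

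The paper resolves this with a one-line case split that you have missed: if $\lambda_k(\Gamma)>\lambda_k(M)$ the claimed lower bound is trivially true (the right-hand side is at most $\lambda_k(M)$), so one may freely assume $\lambda_k(\Gamma)\leq\lambda_k(M)$ and then replace $\lambda_k(\Gamma)$ by the larger $\lambda_k(M)$ inside the square-root term in (\ref{nemo}). From there the entire computation is intrinsic and never touches Theorem~\ref{upperbd} or Cheng's uniform bound $\LL$. Your alternative — substituting the already-proved upper bound $\lambda_k(\Gamma)\leq(1+C_n\epsilon/\rho+C_\MMM\rho)\lambda_k(M)+C_\MMM\rho\lambda_k^{3/2}(M)$ into the square root — also works, but it is heavier and, taken at face value, produces extra cross-terms of order $\rho^2\lambda_k^2(M)$ that are not manifestly of the stated form $\rho\lambda_k^{3/2}(M)$. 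This is exactly what leads you to believe the final constant must depend on $\LL$ (and hence on $k$), which would contradict the theorem statement, where $C_\MMM$ is explicitly $k$-independent.

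That dependence is not actually necessary. The missing observation (which also silently underlies the paper's argument, since $\norm{Iu}\geq(1-\rho\sqrt{3\lambda_k(M)/(1-c_\MMM\rho)})\norm{u}$ is only a useful lower bound when the factor is positive) is: whenever $\rho\sqrt{\lambda_k(M)}$ is bounded below by a fixed numerical constant, the right-hand side of the target inequality is already nonpositive, so the bound holds trivially. One may therefore assume throughout that $\rho\sqrt{\lambda_k(M)}\leq 1$, say; then $\rho^2\lambda_k^2(M)\leq\rho\lambda_k^{3/2}(M)$ and all your extra cross-terms collapse into the allowed $C_\MMM\rho\lambda_k^{3/2}(M)$ error without any appeal to $\LL$. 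With that one remark added, your route closes correctly and the constants stay $k$-independent as required; the paper's case split on $\lambda_k(\Gamma)\lessgtr\lambda_k(M)$ just accomplishes the same thing more economically, keeping the proof self-contained and the bookkeeping lighter.
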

\begin{proof} If $\lambda_k(\Gamma)>\lambda_k(M)$ then the theorem follows immediately. So, we assume that  $\lambda_k(\Gamma)\leq\lambda_k(M)$. We use the min-max principle again to prove the above inequality. Therefore, it is enough to show there is a subspace $L\subset H^{1}(M)$ with dim$L=k$ such that for any non-zero $f\in L$,  $\frac{\|df\|_{L^2}^2}{\|f\|_{L^2}^2}$ is less than or equal to the right hand side of the above inequality. We consider the $ k$-dimensional subspace $W$ of $L^2(\G)$ which is the linear span of the first $k$ orthonormal eigenfunctions of $-\Delta_{\Gamma}$. Let $u\in W$. Then we have $\norm{\delta u}^{2}\leq \lambda_{k}(\Gamma)\norm{u}^{2}.$ Using Lemma \ref{interpolation}(1), \begin{equation}
        \label{deno} \norm{Iu}_{L^{2}}\geq \left(1-\rho\sqrt{\frac{3\lambda_k(\Gamma)}{1-c_{\MMM}\rho}}\right)\norm{u}. \nonumber
    \end{equation}
    Let $f=Iu.$ As $\lambda_k(\Gamma)\leq \lambda_{k}(M)$  we have,
    \begin{equation} \label{nemo}
         \norm{f}^{2}_{L^{2}} \geq  \left(1-\rho\sqrt{\frac{3\lambda_k(M)}{1-c_{\MMM}\rho}}\right)^2\norm{u}^2  
        \end{equation}
From Lemma \ref{interpolation}(2),
\begin{align}
 \norm{df}^{2}_{L^{2}}&\leq  \left(1-\frac{2\epsilon}{\rho}\right)^{-n-2} \frac{(1+ 2c_\MMM \rho)^2}{(1-c_\MMM \rho)^3}\norm{\delta u}^{2} \nonumber \\
        &\leq  \left(1-\frac{2\epsilon}{\rho}\right)^{-n-2} \frac{(1+ 2c_\MMM \rho)^2}{(1-c_\MMM \rho)^3}\lambda_k(\Gamma)\norm{u}^2.      
\end{align}
From equations (\ref{nemo}) and (\ref{deno}), we have
    \begin{align}
        \lambda_k(M)\leq\frac{\norm{df}^{2}}{\norm{f}^{2}}&\leq \left(1-\frac{2\epsilon}{\rho}\right)^{-n-2}\frac{(1+ 2c_\MMM \rho)^2}{(1-c_\MMM \rho)^2(1-c_{\MMM}\rho-4\rho\sqrt{(1-\rho c_\MMM)\lk(M)}) } \lambda_k(\Gamma).\nonumber
    \end{align}
For $\rho<\frac{1}{4c_\MMM}$ and $\frac{2\epsilon}{\rho}<1$ we have, 
 \begin{align}
        \lambda_k(\Gamma) &\geq (1-\frac{2\epsilon}{\rho})^{n+2}(1-c_\MMM \rho)^2(1-c_{\MMM}\rho-4\rho\sqrt{(1-\rho c_\MMM)\lk(M)})(1+ 2c_\MMM \rho)^{-2}\lambda_k(M)\\
        &\geq \left(1-2(n+2)\frac{\epsilon}{\rho}\right)(1-2c_\MMM \rho)\left(1-c_{\MMM}\rho-4\rho\sqrt{\lk(M)}\right)(1+5c_{\MMM}\rho)^{-1}\lambda_k(M)\nonumber
    \end{align}
  Now the theorem follows from the Taylor series expansion of $(1+5c_{\MMM}\rho)^{-1}.$ 
    \end{proof}

\subsection*{Proof of Theorem \ref{main 1} :} The proof follows from Theorem \ref{upperbd} and Theorem \ref{lowbd}.

\section{Approximation of eigenfunctions}
In this section, we investigate the approximation of eigenfunctions of $M\in\MM$ using similar arguments as in \cite{article}. The following lemma shows that the discretization map and the interpolation map are almost inverses of each other. 
\begin{lemma} \label{almost inverses}\begin{enumerate}
      \item   Let $f\in H^1(M)$. Then for $\rho<\frac{1}{2c_\MMM}$,
   \[\norm{IPf-f}\leq C_n\rho\norm{df}.\]
   
  \item Let $u\in L^2(\G) $, $\rho\leq \frac{1}{3c_\MMM}$ and $\frac{\epsilon}{\rho}<\frac{1}{2n}$. Then,
   \[\norm{PIu-u}\leq C_n\rho\norm{\delta u}\]
  \end{enumerate}
  where $C_n$ is a positive constant depending only on the dimension $n.$
  \end{lemma}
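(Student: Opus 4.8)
The plan is to prove both estimates by telescoping through the intermediate objects $P^{*}u$ and $f$, and then quoting the smoothing and discretization estimates of Sections 2--5, while keeping at hand two elementary facts about the pair $(P,P^{*})$. First, $PP^{*}=\mathrm{id}$ on $L^{2}(\G)$: this is immediate from the definitions, since the $V_{i}$ partition $M$ and $P^{*}u$ equals the constant $u(x_{i})$ on $V_{i}$, so averaging it over $V_{i}$ returns $u(x_{i})$. Second, $P$ is an $L^{2}$-contraction, $\norm{Pg}\le\norm{g}_{L^{2}}$ for $g\in L^{2}(M)$, by Cauchy--Schwarz applied cell by cell. Recall in addition that $P^{*}$ is norm preserving and that $\Lambda_{r}$ is linear.

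For part (2) I would start from $PIu-u=P\Lambda_{\rho-2\epsilon}P^{*}u-PP^{*}u=P(\Lambda_{\rho-2\epsilon}P^{*}u-P^{*}u)=P(Iu-P^{*}u)$, using $PP^{*}u=u$. Since $P$ is a contraction, $\norm{PIu-u}\le\norm{Iu-P^{*}u}_{L^{2}}$, and the right-hand side is exactly the quantity bounded inside the proof of Lemma \ref{interpolation}(1): the same computation (valid since here $\rho\le\frac{1}{3c_\MMM}$ and $\frac{\epsilon}{\rho}<\frac{1}{2n}$, so that $\tfrac{2\epsilon}{\rho}<\tfrac1n$ and $c_\MMM\rho<1$) gives $\norm{Iu-P^{*}u}_{L^{2}}^{2}\le\frac{3\rho^{2}}{1-c_\MMM\rho}\norm{\delta u}^{2}$. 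Under these hypotheses $\frac{3}{1-c_\MMM\rho}$ is bounded by an absolute constant, so $\norm{PIu-u}\le C_{n}\rho\norm{\delta u}$.

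For part (1) I would split $IPf-f=\Lambda_{\rho-2\epsilon}(P^{*}Pf-f)+(\Lambda_{\rho-2\epsilon}f-f)$ and bound the two summands separately. The first is handled by composing Lemma \ref{boundlambda} (the $L^{2}$-bound on $\Lambda_{\rho-2\epsilon}$) with Lemma \ref{P*Pf-f}, giving $\norm{\Lambda_{\rho-2\epsilon}(P^{*}Pf-f)}_{L^{2}}\le C_{n}\epsilon\norm{df}_{L^{2}}\le C_{n}\rho\norm{df}_{L^{2}}$ since $\epsilon<\rho$. The second is handled by Lemma \ref{lambda approx} together with Lemma \ref{theo1}, which yield $\norm{\Lambda_{\rho-2\epsilon}f-f}_{L^{2}}^{2}\le(\rho-2\epsilon)^{2}\,\frac{1+c_\MMM(\rho-2\epsilon)}{1-c_\MMM(\rho-2\epsilon)}\norm{df}_{L^{2}}^{2}\le C\rho^{2}\norm{df}_{L^{2}}^{2}$. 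Adding the two bounds and using $c_\MMM\rho<\frac12$ to control every $\frac{1+c_\MMM r}{1-c_\MMM r}$ with $r\le\rho$ by an absolute constant finishes part (1).

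The argument is essentially bookkeeping over lemmas already proved; the only steps needing their own line of reasoning are the identities $PP^{*}=\mathrm{id}$ and $\norm{Pg}\le\norm{g}$, together with the one genuinely delicate point: verifying that, under the stated smallness of $\rho$ and $\epsilon/\rho$, every factor of the form $\frac{1+c_\MMM r}{1-c_\MMM r}$ with $r\le\rho$ and every power such as $(1-\tfrac{2\epsilon}{\rho})^{-n}$ is bounded by an absolute constant, so that the resulting constant $C_{n}$ truly depends only on $n$ and not on $\MMM$. I do not anticipate a real obstacle beyond this constant-tracking.
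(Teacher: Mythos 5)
Your part (1) follows exactly the paper's route: the same decomposition
\[
IPf-f=\Lambda_{\rho-2\epsilon}(P^{*}Pf-f)+(\Lambda_{\rho-2\epsilon}f-f),
\]
the same two lemmas (Lemma~\ref{boundlambda} with Lemma~\ref{P*Pf-f} for the first summand, Lemma~\ref{lambda approx} with Lemma~\ref{theo1} for the second), and the same constant-tracking under $c_\MMM\rho<\tfrac12$. No comment needed there.

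Your part (2) is correct but takes a genuinely different and shorter route. The paper writes
\[
\norm{PIu-u}=\norm{P^{*}(PIu-u)}\le \norm{P^{*}PIu-Iu}+\norm{Iu-P^{*}u},
\]
using that $P^{*}$ is an isometry, and then has to pay for the extra term $\norm{P^{*}PIu-Iu}$: this is bounded by Lemma~\ref{P*Pf-f} applied to $Iu$, which in turn forces a bound on $\norm{dIu}$ obtained from Lemma~\ref{d of lambda r} and the estimate~(\ref{delta and Er}). You instead observe that $PP^{*}=\mathrm{id}$ on $L^{2}(\Gamma)$ (averaging a function that is constant on each cell returns that constant) and that $P$ is an $L^{2}$-contraction (Cauchy--Schwarz cell by cell), so
\[
PIu-u=P\bigl(\Lambda_{\rho-2\epsilon}P^{*}u-P^{*}u\bigr)=P(Iu-P^{*}u),
\qquad
\norm{PIu-u}\le\norm{Iu-P^{*}u}_{L^{2}(M)},
\]
and the latter is exactly the quantity already bounded by~(\ref{Iu - P^*}). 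This collapses the argument to a single application of a lemma already proved, bypassing the gradient bound on $Iu$ entirely; the only price is that $PP^{*}=\mathrm{id}$ and the contraction property of $P$ need to be recorded (both are elementary, though the paper only states the much weaker $\norm{Pf}\le N\norm{f}$ in Lemma~3.2). Your constant-tracking is also sound: under $c_\MMM\rho\le\tfrac13$ and $\tfrac{2\epsilon}{\rho}<\tfrac1n$ the factor $\tfrac{3}{1-c_\MMM\rho}$ is bounded by $\tfrac92$, so the resulting constant is in fact absolute. This is a clean simplification of the paper's part~(2).
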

 \begin{proof} 
  \begin{enumerate}

     \item By straightforward computation,
      \begin{align}
          \norm{IPf-f}&=\norm{\Lambda_{\rho-2\epsilon}P^*Pf-f} \nonumber\\
          &\leq \norm{\Lambda_{\rho-2\epsilon}(P^*Pf-f)} +\norm{\Lambda_{\rho-2\epsilon}f-f}. \label{traiglineq}
      \end{align}
      We aim to find upper bounds of both the terms obtained above using the triangle inequality.
      From Lemma \ref{boundlambda} and Lemma \ref{P*Pf-f},
      \begin{equation}  \label{pt1}
       \norm{\Lambda_{\rho-2\epsilon}(P^*Pf-f)}^2\leq \frac{4n\nu_{n}}{c_n }\frac{(1+c_\MMM \rho)^2}{1-c_\MMM\rho}\epsilon^2\norm{df}_{L^{2}}^2. 
      \end{equation}
      Again from Lemma \ref{lambda approx} and Lemma \ref{theo1},
      \begin{equation}
          \label{pt2}
          \norm{\Lambda_{\rho-2\epsilon}f-f}^2\leq \frac{n+2}{\nu_{n}(1-c_\MMM\rho)
          (\rho-2\epsilon)^n}E_{\rho-2\epsilon}(f)\leq \frac{1+c_\MMM \rho}{1-c_\MMM\rho}
          \rho^{2}\norm{df}^2.
      \end{equation}

 From equations (\ref{traiglineq}), (\ref{pt1}) and (\ref{pt2}), we have, 
\[
\norm{IPf-f}\leq \frac{(1+c_\MMM \rho)}{\sqrt{1-c_{\MMM}\rho}}\left(1+\sqrt{\frac{n\nu_n}{c_n}}\right)\rho\norm{df}. \]
As $c_{\MMM}\rho <\frac{1}{2}$, the first part of the lemma follows.
\item
Since $P^*$ preserves norm, using the triangle inequality
\begin{align}
    \norm{PIu-u} = \norm{P^*(PIu-u)} \leq \norm{P^*PIu-Iu} + \norm{Iu-P^*u}.
\end{align}

Using Lemma \ref{P*Pf-f}, for $\rho<\frac{1}{c_\MMM}$,
\[\norm{P^*PIu-Iu}^2\leq \frac{4n\nu_n}{c_n}(1+c_\MMM \rho)\epsilon^2\norm{dIu}^2.\]
By Lemma \ref{i and p*}  we have,
\[\norm{dIu}^2_{L^{2}}\leq  (1-\frac{2\epsilon}{\rho})^{-n-2}\frac{(1+2c_\MMM \rho)^2}{(1-c_\MMM\rho)^3} \norm{\delta u}^{2}\]
Therefore, for $\rho< \frac{1}{c_\MMM}$ and $\frac{2\epsilon}{\rho}<\frac{1}{n}$,
\begin{equation}
\label{P*PIu-Iu}
 \norm{P^*PIu-Iu}\leq 2\sqrt{\frac{3n\nu_n}{c_n}}\frac{(1+2c_{\MMM}\rho)^{\frac{3}{2}}}{(1-c_\MMM \rho)^{\frac{3}{2}}} \epsilon\norm{\delta u}.  
\end{equation}
From equation (5.3) we have,
\begin{equation}
    \label{Iu-P*u 2}
    \norm{Iu-P^*u}\leq \frac{3\rho}{1-2c_\MMM\rho}\norm{\delta u}.
\end{equation}

Hence from (\ref{Iu-P*u 2}) and (\ref{P*PIu-Iu}),
\[\norm{PIu-u}\leq 3 \left( 1+\sqrt{\frac{\nu_n}{c_n}}\right)\frac{(1+2c_{\MMM}\rho)^{\frac{3}{2}}}{(1-2c_\MMM \rho)^{\frac{3}{2}}}\rho\norm{\delta u}.\]
 \end{enumerate}
 The second part of the lemma follows from the fact that $c_{\MMM}\rho<\frac{1}{3}.$
  \end{proof}
   Let $I\subset \mathbb{R}$ be an interval, and define $H_I(M)$(respectively $H_I(X)$) as the subspace of $H^1(M)$ (respectively $H^1(X)$)  spanned by eigenfunctions corresponding to eigenvalues in $I.$  We denote $H_{(-\infty,\lambda)}(M)$ by $H_\lambda (M).$ Let $\mathbb{P}_I: L^2(M)\to H_I(M)$ be the orthogonal projection. Analogous notation is used for projections from $L^2(\G)$ to $H_J(X)$ and we write  $\mathbb{P}_\lambda$ as shorthand for $\mathbb{P}_{(-\infty,\lambda)}$. 
  \begin{lemma} \label{almost isometry}
      \begin{enumerate}
          \item Let $\lambda>0$ and $f\in H_\lambda(M).$ Then
          \[\norm{\delta(Pf)}\geq \left(1-\sigma \right)\norm{df}\]
         where $\sigma=C_\MMM\rho + C_n\sqrt{\lambda}\rho+C_n\frac{\epsilon}{\rho}.$
          \item Let $\lambda>0$ and $u\in H_\lambda(X).$ Then
          \[ \norm{d(Iu)}\geq \left(1-\sigma\right)\norm{\delta u}\]
        where $\sigma=C_\MMM\rho + C_n\sqrt{\lambda}\rho+C_n\frac{\epsilon}{\rho}.$  
      \end{enumerate}
  \end{lemma}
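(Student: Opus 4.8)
The plan is to prove both parts symmetrically, exploiting the ``almost inverse'' relations established in Lemma \ref{almost inverses} together with the norm-comparison estimates already available. I will treat part (1) in detail; part (2) is identical with the roles of $P$, $I$ reversed and $\delta$ playing the role of $d$.

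\textbf{Part (1).} Let $f \in H_\lambda(M)$, so in particular $\norm{df}^2 \le \lambda \norm{f}^2$. The key identity is $\norm{df} \le \norm{d(IPf)} + \norm{d(IPf - f)}$... but $d(IPf-f)$ is not directly controlled. Instead I would go the other way: by Lemma \ref{almost isometry}-type reasoning, write
\begin{equation}
\norm{\delta(Pf)} \ge \norm{d(IPf)} - \norm{d(IPf) - \delta(Pf)}? \nonumber
\end{equation}
This is also awkward since $d$ and $\delta$ act on different spaces. The cleaner route: combine Lemma \ref{interpolation}(2) applied to $u = Pf$ — which gives $\norm{d(IPf)} \le (1-\tfrac{2\epsilon}{\rho})^{-n/2-1}\tfrac{1+2c_\MMM\rho}{(1-c_\MMM\rho)^{3/2}}\norm{\delta(Pf)}$ — with Lemma \ref{almost inverses}(1), which gives $\norm{IPf - f} \le C_n\rho\norm{df}$. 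The first inequality already bounds $\norm{\delta(Pf)}$ from below by $\norm{d(IPf)}$ up to the displayed factor. So it remains to bound $\norm{d(IPf)}$ from below by $(1-\sigma')\norm{df}$ for a suitable $\sigma'$. Here is where the spectral hypothesis $f \in H_\lambda(M)$ enters: since $IPf - f$ is small in $L^2$ and $f$ lies in a low-frequency subspace, I want to transfer this to smallness of $\norm{d(IPf-f)}$. Concretely, set $h = IPf - f$; one controls $\norm{dh}$ by an inverse-triangle argument, but the honest tool is: $\norm{d(IPf)} \ge \norm{df} - \norm{d(IPf - f)}$ and then estimate $\norm{d(IPf-f)}$ using that $IPf$ is already $C^{0,1}$ with controlled gradient (from Lemma \ref{interpolation}(2) and $\norm{\delta(Pf)}^2 \le (1+c_\MMM(\rho+2\epsilon))(1+\tfrac{2\epsilon}{\rho})^{n+2}\norm{df}^2$ via Lemma \ref{delp}), combined with the $L^2$-smallness from Lemma \ref{almost inverses}(1) and an interpolation/Bochner-type bound — but to stay within the results quoted, the right move is to directly estimate $\norm{d(IPf)-df}$ by expanding and reusing the average-dispersion machinery, or more simply to observe $\norm{df}^2 = \langle df, d(IPf)\rangle + \langle df, d(f - IPf)\rangle$ and bound the second term by $\norm{df}\cdot\norm{d(f-IPf)}$, then bound $\norm{d(f-IPf)}$ itself.

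\textbf{Assembling the estimate.} Putting these together: from $\norm{df} \le \norm{d(IPf)} + \norm{d(f-IPf)}$ and the Cauchy--Schwarz splitting above, one gets $\norm{df}(1 - \text{(small)}) \le \norm{d(IPf)} \le (1+\text{(small)})\norm{\delta(Pf)}$, where the small quantities are of the form $C_\MMM\rho + C_n\sqrt\lambda\,\rho + C_n\tfrac{\epsilon}{\rho}$ — the $C_n\sqrt\lambda\,\rho$ term arising precisely when one converts the $L^2$-bound $\norm{IPf - f} \le C_n\rho\norm{df} \le C_n\rho\sqrt\lambda\norm{f}$ into a statement usable for gradients. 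Expanding $(1-\tfrac{2\epsilon}{\rho})^{-n/2-1}$ and $(1-c_\MMM\rho)^{-3/2}$ in Taylor series and collecting the leading terms yields $\norm{\delta(Pf)} \ge (1-\sigma)\norm{df}$ with $\sigma = C_\MMM\rho + C_n\sqrt\lambda\,\rho + C_n\tfrac{\epsilon}{\rho}$. Part (2) follows the same outline with Lemma \ref{almost inverses}(2) replacing (1), Lemma \ref{delp}'s analogue being $\norm{\delta(P^*u)}$-type control, and the roles of the two Dirichlet forms interchanged; the spectral hypothesis $u \in H_\lambda(X)$ supplies $\norm{\delta u}^2 \le \lambda\norm{u}^2$, which feeds the $C_n\sqrt\lambda\,\rho$ term identically.

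\textbf{Main obstacle.} The delicate point is converting the $L^2$-level ``almost inverse'' bounds of Lemma \ref{almost inverses} into \emph{gradient}-level comparisons. One cannot bound $\norm{d(IPf - f)}$ by $\norm{IPf - f}$ directly; the restriction to the spectral subspace $H_\lambda$ is exactly what makes this possible, via either a resolvent/projection argument ($\norm{d\mathbb{P}_\lambda h} \le \sqrt\lambda\norm{h}$ for $h$ in the range, plus handling the orthogonal complement) or by a careful re-examination of the proof of Lemma \ref{almost inverses} tracking the gradient from the outset. I expect roughly half the proof's effort to be spent making this transfer clean and extracting the correct $\sqrt\lambda$ dependence; the remaining Taylor-expansion bookkeeping is routine.
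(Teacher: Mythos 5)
Your proposal correctly identifies the two halves of the estimate: Lemma \ref{interpolation}(2) supplies the upper bound $\norm{d(IPf)} \leq (1+\cdots)\norm{\delta(Pf)}$, so the remaining task is a lower bound on $\norm{d(IPf)}$ in terms of $\norm{df}$, in which the $L^2$-level bound of Lemma \ref{almost inverses}(1) must somehow be upgraded to a gradient-level comparison. You also correctly locate the obstruction (one cannot directly control $\norm{d(IPf-f)}$) and name the projection trick $\norm{d\,\mathbb{P}_\lambda h} \leq \sqrt{\lambda}\norm{h}$ as a candidate fix. That is the right anatomy.

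However, the proposal never executes the crucial step, and one of the two concrete routes you actually write down would not close. When you write $\norm{df}^2 = \langle df, d(IPf)\rangle + \langle df, d(f-IPf)\rangle$ and propose to bound the second term by Cauchy--Schwarz as $\norm{df}\cdot\norm{d(f-IPf)}$, you have re-introduced the very quantity $\norm{d(f-IPf)}$ that you (correctly) flagged as uncontrollable; this is a dead end. The fix for that route is integration by parts, $\langle df, d(f-IPf)\rangle = \langle -\Delta_M f, \, f-IPf\rangle \leq \norm{\Delta_M f}\,\norm{f-IPf} \leq \sqrt{\lambda}\norm{df}\cdot C_n\rho\norm{df}$, using $\norm{\Delta_M f}\leq \sqrt{\lambda}\norm{df}$ on $H_\lambda(M)$---a valid alternative, but not what you wrote. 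The paper's argument is the one you labelled a ``resolvent/projection argument'' and then deferred: because the spectral projection $\mathbb{P}_\lambda$ does not increase Dirichlet energy, $\norm{d(IPf)} \geq \norm{d(\mathbb{P}_\lambda IPf)}$ (so there is no ``orthogonal complement to handle''); then $\mathbb{P}_\lambda IPf - f = \mathbb{P}_\lambda(IPf - f)$ lies in $H_\lambda(M)$, whence $\norm{d(\mathbb{P}_\lambda IPf - f)} \leq \sqrt{\lambda}\norm{\mathbb{P}_\lambda(IPf-f)} \leq \sqrt{\lambda}\norm{IPf-f} \leq C_n\sqrt{\lambda}\rho\norm{df}$ by Lemma \ref{almost inverses}(1), and a triangle inequality gives $\norm{d(IPf)} \geq (1-C_n\sqrt{\lambda}\rho)\norm{df}$. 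That three-line argument is the crux of the lemma; your proposal explicitly defers it (``roughly half the proof's effort''), so as submitted this is an outline with a genuine gap at its centre rather than a proof.
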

  \begin{proof}
     \begin{enumerate}
         \item As $\mathbb{P}_\lambda$ is non-increasing with respect to the Dirichlet energy,
         \[\norm{d(IPf)}_{L^2}\geq \norm{d(\mathbb{P}_\lambda IPf)}_{L^2}\geq \norm{df}_{L^2}-\norm{d(\mathbb{P}_\lambda IPf-f)}_{L^2}.\]
         Given that  $f\in H_\lambda(M)$, Lemma  \ref{almost inverses} implies for $\rho<\frac{1}{3c_\MMM}$,      \begin{align}
            \norm{d(\mathbb{P}_\lambda IPf-f)}_{L^2}  \leq \sqrt{\lambda}\|\mathbb{P}_\lambda (IPf-f)\| &\leq C_n\sqrt{\lambda}\rho\norm{df}.\nonumber
        \end{align}
      
    Hence, 
    \begin{equation}
    \label{dIpf}
    \norm{d(IPf)}_{L^2}\geq \left(1-C_n\sqrt{\lambda}\rho\right)\norm{df}.    
    \end{equation}
    For $\rho<\frac{1}{4c_\MMM}$ and $\frac{\epsilon}{\rho}<\frac{1}{2n}$, from Lemma \ref{interpolation} there exists a constant $C_n, C_{\MMM}>0$ such that
    \begin{equation}
        \label{dipf2}
        \norm{d(IPf)} \leq (1+C_n\frac{\epsilon}{\rho}+C_{\MMM} \rho)\norm{\delta Pf}.
    \end{equation}
    Thus, from (\ref{dIpf}) and (\ref{dipf2}),
    \begin{align}
    \norm{\delta(Pf)} &\geq \left(1-\sigma \right)\norm{df}\nonumber
    \end{align}
 where $\sigma=C_\MMM\rho + C_n\sqrt{\lambda}\rho+C_n\frac{\epsilon}{\rho}.$

     \item Applying the techniques from the previous part and Lemma \ref{almost inverses}, for $\rho<\frac{1}{3 c_\MMM }$ we obtain

    \begin{equation}
        \label{deltaIu2}
        \norm{\delta(PIu)}_{L^2}\geq \left(1- C_n\sqrt{\lambda}\rho\right)\norm{\delta u}.
    \end{equation}
     
     From Lemma \ref{delp}, for  $\frac{\epsilon}{\rho}<\frac{1}{C_n}$
    \begin{equation}
        \label{deltaIu}
        \norm{\delta(PIu)}\leq(1+C_n\frac{\epsilon}{\rho}+C_{\MMM}\rho)\norm{d(Iu)}.    
    \end{equation}
    From (\ref{deltaIu2}) and (\ref{deltaIu}),
    \begin{align}
            \norm{d(Iu)}&\geq \frac{1- C_n\sqrt{\lambda}\rho}{(1+C_n\frac{\epsilon}{\rho}+C_{\MMM}\rho)}\norm{\delta u} \nonumber \\
        &\geq (1-\sigma)\norm{\delta u} \nonumber
       \end{align}
    where $\sigma=C_\MMM\rho + C_n\sqrt{\lambda}\rho+C_n\frac{\epsilon}{\rho}.$

         \end{enumerate}
  \end{proof}
  
  Let $\{f_k\}_1^\infty$ and $\{u_k\}_1^N$ be orthonormal eigenvectors of $\Delta_M$ and $\Delta_X$ respectively.
The next lemma approximates eigenfunctions of $\Delta_M$, which has been proved using Lemma 7.3 from \cite{article} with analogous inequalities and approximations in our setting. 
  \begin{lemma} \label{7.3}Let $0<\epsilon<\rho<\frac{i_0}{2}$, $k\in\mathbb{N}$ and $a\in \mathbb{R}_+$ 
      \begin{enumerate}
          \item Let $\lambda=\lambda_k(M).$ Then
          \[\norm{Pf_k-\mathbb{P}_{\lambda_{k+a}}Pf_k}^2\leq \frac{1}{a}C_{\MMM,k}\left(\frac{\epsilon}{\rho}+\rho \right).\]
         
          Also \[\norm{\delta(Pf_k-\mathbb{P}_{\lambda+a}Pf_k)}^2\leq \CC\left(1+\frac{1}{a}\right)\left(\frac{\epsilon}{\rho}+\rho \right).\]
          
          \item Let $\lambda=\lambda_k(X).$ Then 
          \[\norm{Iu_k-\mathbb{P}_{\lambda+a}Iu_k}^2\leq \frac{1}{a}C_{\MMM,k}\left(\frac{\epsilon}{\rho}+\rho \right).\]
          
          Also \[\norm{d(Iu_k-\mathbb{P}_{\lambda+a}Iu_k}^2\leq \CC\left(1+\frac{1}{a}\right)\left(\frac{\epsilon}{\rho}+\rho \right).\]
      \end{enumerate}
  \end{lemma}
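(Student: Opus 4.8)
The plan is to follow the template of Lemma 7.3 in \cite{article}, adapting each step to the estimates established earlier in this paper. I will prove part (1) in detail; part (2) is entirely symmetric with the roles of $M$ and $\G$, and of $P$ and $I$, interchanged, using Lemma \ref{almost inverses}(2) and Lemma \ref{delp} in place of Lemma \ref{almost inverses}(1) and Lemma \ref{interpolation}(2).

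\textbf{Step 1: Decompose $Pf_k$ into low- and high-frequency parts.} Write $Pf_k = \mathbb{P}_{\lambda+a}Pf_k + (Pf_k - \mathbb{P}_{\lambda+a}Pf_k)$, where the second summand lies in the span of eigenfunctions of $-\Delta_\G$ with eigenvalue at least $\lambda+a$. The key elementary fact is that on this high-frequency subspace the Dirichlet energy dominates the $L^2$ norm by a factor $\lambda+a$: $\norm{\delta(Pf_k-\mathbb{P}_{\lambda+a}Pf_k)}^2 \geq (\lambda+a)\norm{Pf_k-\mathbb{P}_{\lambda+a}Pf_k}^2$. So the first inequality in (1) will follow once I bound $\norm{\delta(Pf_k-\mathbb{P}_{\lambda+a}Pf_k)}^2$; and the second displayed inequality of (1) is almost this same bound (the $1/a$ versus $1+1/a$ difference accounting for whether one then divides by $\lambda+a \geq a$).

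\textbf{Step 2: Control the Dirichlet energy of the high-frequency part.} Since $\mathbb{P}_{\lambda+a}$ is an orthogonal projection that does not increase Dirichlet energy, and since $\delta$ is additive along the spectral decomposition, I estimate $\norm{\delta(Pf_k-\mathbb{P}_{\lambda+a}Pf_k)}^2 = \norm{\delta(Pf_k)}^2 - \norm{\delta(\mathbb{P}_{\lambda+a}Pf_k)}^2$. The first term is bounded above via Lemma \ref{delp}: $\norm{\delta(Pf_k)}^2 \leq (1+c_\MMM(\rho+2\epsilon))(1+2\epsilon/\rho)^{n+2}\lambda_k(M)$, which by Taylor expansion is at most $\lambda_k(M) + C_{\MMM,k}(\epsilon/\rho+\rho)$. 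For a lower bound on the second term, I use Lemma \ref{almost inverses}(1) to say that $Pf_k$ is $C_n\rho\norm{df_k}$-close (after applying $I$) to $f_k$, together with Lemma \ref{almost isometry}-type reasoning: $\mathbb{P}_{\lambda+a}Pf_k$ captures almost all of $Pf_k$'s energy because $Pf_k$ is close to the honest eigenfunction $f_k$ whose energy is concentrated at $\lambda < \lambda+a$. Concretely, $\norm{\delta(\mathbb{P}_{\lambda+a}Pf_k)} \geq \norm{\delta(Pf_k)} - \norm{\delta(Pf_k - \mathbb{P}_{\lambda+a}Pf_k)}$ is not circular if instead I compare against $f_k$ directly: using $\norm{d(IPf_k - f_k)} \leq \sqrt{\lambda_k(M)}\,C_n\rho\norm{df_k}$ from Lemma \ref{almost inverses}(1) applied inside the spectral projection (as in the proof of Lemma \ref{almost isometry}), and Lemma \ref{interpolation}(2) relating $\norm{dIu}$ to $\norm{\delta u}$, one transfers the energy-concentration of $f_k$ to $Pf_k$. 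This yields $\norm{\delta(\mathbb{P}_{\lambda+a}Pf_k)}^2 \geq \lambda_k(M) - C_{\MMM,k}(\epsilon/\rho+\rho)$. Subtracting gives the bound $\norm{\delta(Pf_k-\mathbb{P}_{\lambda+a}Pf_k)}^2 \leq C_{\MMM,k}(\epsilon/\rho+\rho)$, and combining with Step 1 produces both displayed inequalities with the stated dependence on $a$.

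\textbf{Main obstacle.} The delicate point is Step 2's lower bound on $\norm{\delta(\mathbb{P}_{\lambda+a}Pf_k)}$ without circularity: I must pass between the three pictures — $f_k$ on $M$, $Pf_k$ on $\G$, and $IPf_k$ back on $M$ — controlling at each stage both the $L^2$ defect (Lemma \ref{almost inverses}(1), Lemma \ref{lambda approx}) and the Dirichlet-energy distortion (Lemma \ref{delp}, Lemma \ref{interpolation}(2), Lemma \ref{almost isometry}(1)). Bookkeeping the error terms so that every constant depends only on $\MMM$ and $k$ (absorbing $\lambda_k(M) \leq \LL$ via Cheng's bound) while keeping the $\epsilon/\rho + \rho$ scaling requires care, but no new idea beyond what is already in \cite{article}; all the needed manifold-side estimates have been re-derived under the bounded-Ricci hypothesis in Sections 2--5.
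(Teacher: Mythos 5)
Your overall plan (decompose into a low-frequency part plus a tail, use the spectral gap $\norm{\delta u'}^2 \geq (\lambda+a)\norm{u'}^2$ on the tail, then bound the tail's Dirichlet energy) is a natural first attempt but is not the route the paper takes, and the step you yourself flag as delicate is where it fails. Specifically, you want $\norm{\delta(\mathbb{P}_{\lambda+a}Pf_k)}^2 \geq \lambda - C_{\MMM,k}(\epsilon/\rho+\rho)$, which, given your upper bound on $\norm{\delta(Pf_k)}^2$, is \emph{equivalent} to the bound $\norm{\delta u'}^2 \leq C_{\MMM,k}(\epsilon/\rho+\rho)$ you are trying to prove; absent a new input this is circular. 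The input you propose does not close the circle: you invoke ``$\norm{d(IPf_k - f_k)} \leq \sqrt{\lambda_k(M)}\,C_n\rho\norm{df_k}$,'' but Lemma \ref{almost inverses}(1) only gives an $L^2$ estimate $\norm{IPf_k - f_k} \leq C_n\rho\norm{df_k}$, and the paper only upgrades this to a derivative bound \emph{after applying} $\mathbb{P}_\lambda$ (so that both sides lie in $H_\lambda(M)$ where $\norm{d\cdot}\leq\sqrt\lambda\norm{\cdot}$); there is no such bound for $d(IPf_k-f_k)$ itself, since $IPf_k$ is not spectrally localized. More importantly, even if one had such a bound, it controls the defect on the $M$-side but says nothing about how the graph-side Dirichlet energy of $Pf_k$ distributes across the graph spectrum above and below $\lambda+a$, which is what the claimed lower bound asserts.

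The paper's proof sidesteps this entirely by a quadratic-form comparison. It introduces $Q'(u) = Q(\mathbb{P}_{\lambda+a}u) + \lambda\norm{u-\mathbb{P}_{\lambda+a}u}^2$, notes $Q'\leq Q$, and then compares the eigenvalues of $Q|_L$ and $Q'|_L$ on the fixed $k$-dimensional subspace $L=P(W)$: the min-max principle and the already-proved spectral convergence (Theorem \ref{main 1}) give $\lambda_j' \geq \lambda_j(M) - C_{\MMM,k}(\epsilon/\rho+\rho)$ while $\lambda_j^L \leq \lambda_j(M) + C_{\MMM,k}(\epsilon/\rho+\rho)$; since $Q-Q'$ is positive semidefinite on $L$, this yields $(Q-Q')(u) \leq C_{\MMM,k}(\epsilon/\rho+\rho)\norm{u}^2$, and the identity $(Q-Q')(u) = Q(u')-\lambda\norm{u'}^2 \geq \tfrac{a}{\lambda+a}Q(u')$ then delivers both estimates at once. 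This avoids ever needing a lower bound on the low-frequency Dirichlet energy, which is precisely the point your proposal gets stuck on. To repair your proof you would essentially have to reproduce this $Q$-versus-$Q'$ comparison, so it is not merely a matter of bookkeeping.
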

  \begin{proof}
  \begin{enumerate}
      \item From the proof of Theorem \ref{uppbd},  we have dim$L$= $k$ if  $\epsilon<\frac{1}{C_{\MMM,k}}$. Let $Q$ denote the discrete Dirichlet energy form on $L^2(\G)$ and $\{\lambda_j^L:1\leq j\leq k\}$ be the eigenvalues of $Q$ restricted to $L.$ \\
For $f\in W$, from Corollary \ref{cor2}
\begin{equation}
    \left(1-C_n\left(\frac{\epsilon}{\rho}+c_\MMM\rho\right)\right)\norm{f}\leq \norm{Pf} \leq \left(1+C_n\left(\frac{\epsilon}{\rho}+c_\MMM\rho\right)\right)\norm{f} \label{pfk bound}
\end{equation}
 and
\begin{equation}
    \norm{\delta(Pf)}\leq C_n\left(\frac{\epsilon}{\rho}+c_\MMM\rho\right)\norm{df}. \nonumber
\end{equation}

By minimax principle, for $\rho<\frac{1}{C_nc_\MMM}$,
\begin{equation}
    \lambda_j^L\leq \lambda_j(M) + \LL\left( 1+C_n\left(\frac{\epsilon}{\rho}+c_\MMM\rho\right)\right) \label{lambda bound quad}
\end{equation}
Let $Q':L^2(\G)\rightarrow \mathbb{R}$ be defined as
\[Q'(u)=Q(\mathbb{P}_{\lambda+a}(u))+\lambda\norm{u-\mathbb{P}_{\lambda+a}(u)}^2.\]
We observe that $Q' $ is a quadratic form and $Q'\leq Q.$ Hence, for every $j\leq m$ and $V\subset L^2(\G)$ such that dim$V=j$,
\[\sup_{v\in V\setminus\{0\}}\frac{Q'(v)}{\norm{v}^2}\geq \min\{\lambda,\lambda_j(\G)\}.\]
Let $\{\lambda_j':1\leq j\leq k\}$ be the eigenvalues of $Q'|_L.$ Using the above equality and the minimax principle $\lambda'_j\geq\min\{\lambda,\lambda_j(\G)\}$ for all $j\leq k.$ 

By Theorem \ref{main 1},
\begin{equation}
    \lambda'_j\geq \lambda_j(M)- C_{\MMM,k}\left(\frac{\epsilon}{\rho}+\rho \right). \nonumber
\end{equation}
From (\ref{lambda bound quad}),
\begin{equation}
   \lambda_j^L-\lambda_j'\leq  C_{\MMM,k}\left(\frac{\epsilon}{\rho}+\rho \right) \nonumber
\end{equation}
for all $j\leq k.$
Hence, for every $u\in L$,
\begin{equation}
   Q(u)-Q'(u)\leq C_{\MMM,k}\left(\frac{\epsilon}{\rho}+\rho \right). \label{max diff of eigenvalues} 
\end{equation}
Let $u'=u-\mathbb{P}_{\lambda+a}u.$ Since $Q(u')\geq (\lambda+a)\norm{u'}^2,$
\begin{equation}
    Q(u)-Q'(u) = Q(u)-Q'(u') \geq \frac{a}{\lambda+a}Q(u'). \label{q and q'}
\end{equation}
From (\ref{max diff of eigenvalues}) and (\ref{q and q'}), 
\[Q(u')\leq \frac{\lambda+a}{a}C_{\MMM,k}\left(\frac{\epsilon}{\rho}+\rho \right)\norm{u}^2.\]
Hence,
\[ \norm{u'}^2\leq \frac{1}{a}C_{\MMM,k}\left(\frac{\epsilon}{\rho}+\rho \right)\norm{u}^2.\]
Substituting $u=Pf_k$, and observing from (\ref{pfk bound}) that for sufficiently small $\epsilon, \rho, \frac{\epsilon}{\rho}$, $\norm{Pf_k}<2$, we obtain the required expression.
\item The second assertion follows analogously using Lemma
\ref{interpolation} and appropriate assumptions for $c_\MMM$ and $\LL$ as above.   \end{enumerate}   \end{proof}
The following is a generalization of  Lemma 7.4 from \cite{article}. 
  \begin{lemma} \label{- gamma +alpha projection}
  Let $ \epsilon<\C^{-1}$ and $\C>1$. 
   \begin{enumerate}
       \item Let $\lambda=\lambda_k(M)$ such that $0<\alpha\leq \beta\leq \gamma\leq 1$ and there are no eigenvalues of $-\Delta_\Gamma$ in $(\lambda+\alpha,\lambda+\beta).$ Then 
       \[\norm{Pf_k-\mathbb{P}_{(\lambda-\gamma,\lambda+\alpha]}Pf_k}^2\leq \CC\alpha\gamma^{-1}+\CC\beta^{-1}\gamma^{-1}\left(\frac{\epsilon}{\rho}+\rho \right).\]
       \item Let $\lambda=\lambda_k(\Gamma)$ and $\alpha,\beta,\gamma>0$ such that $\alpha\leq \beta\leq \gamma\leq 1$ and there are no eigenvalues of $-\Delta_M$ in $(\lambda+\alpha,\lambda+\beta).$ Then 
       \[\norm{Iu_k-\mathbb{P}_{(\lambda-\gamma,\lambda+\alpha]}Iu_k}^2\leq \CC\alpha\gamma^{-1}+\CC\beta^{-1}\gamma^{-1}\left(\frac{\epsilon}{\rho}+\rho \right).\]
   \end{enumerate}   
  \end{lemma}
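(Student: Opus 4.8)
\textbf{Proof proposal for Lemma \ref{- gamma +alpha projection}.}

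The plan is to mimic the strategy of Lemma 7.4 in \cite{article}, combining the spectral projection estimate of Lemma \ref{7.3} with the fact that $Pf_k$ (respectively $Iu_k$) is already close to the eigenspace, together with the spectral gap hypothesis on $(\lambda+\alpha,\lambda+\beta)$. I will prove part (1); part (2) is symmetric, with the roles of $P$, $df$, $\lambda_k(M)$ interchanged with $I$, $\delta u$, $\lambda_k(\Gamma)$, and using Lemma \ref{interpolation} in place of the corresponding bounds for $P$.

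First, set $g=Pf_k$ and split the complementary projection according to whether eigenvalues lie below $\lambda-\gamma$ or above $\lambda+\alpha$:
\[
\norm{g-\mathbb{P}_{(\lambda-\gamma,\lambda+\alpha]}g}^2=\norm{\mathbb{P}_{(-\infty,\lambda-\gamma]}g}^2+\norm{g-\mathbb{P}_{\lambda+\alpha}g}^2.
\]
For the high-frequency term, since there are no eigenvalues of $-\Delta_\Gamma$ in $(\lambda+\alpha,\lambda+\beta)$, the projections $\mathbb{P}_{\lambda+\alpha}$ and $\mathbb{P}_{\lambda+\beta}$ agree on $L^2(\Gamma)$, so $\norm{g-\mathbb{P}_{\lambda+\alpha}g}^2=\norm{g-\mathbb{P}_{\lambda+\beta}g}^2$, and the second bound in Lemma \ref{7.3}(1) (applied with $a=\beta$) gives $\norm{g-\mathbb{P}_{\lambda+\beta}g}^2\leq \CC(1+\beta^{-1})(\epsilon/\rho+\rho)\leq 2\CC\beta^{-1}(\epsilon/\rho+\rho)$ after absorbing constants and using $\beta\leq 1$. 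Since $\gamma\leq 1$ this is dominated by $\CC\beta^{-1}\gamma^{-1}(\epsilon/\rho+\rho)$ up to a constant, matching the second term in the claim.

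For the low-frequency term $\norm{\mathbb{P}_{(-\infty,\lambda-\gamma]}g}^2$, write $g'=\mathbb{P}_{(-\infty,\lambda-\gamma]}g$. On the range of this projection the discrete Dirichlet energy satisfies $\norm{\delta g'}^2\leq(\lambda-\gamma)\norm{g'}^2\leq\lambda\norm{g'}^2$, while $f_k$ being an eigenfunction of $-\Delta_M$ with eigenvalue $\lambda$ means $\norm{df_k}^2=\lambda\norm{f_k}^2=\lambda$ (normalized). The key algebraic identity is that the defect in energy is controlled: using that $\delta$ is minimized on the orthogonal complement, one has $\norm{\delta(g-g')}^2\geq (\lambda-\gamma+ \text{gap})\dots$; more directly, the standard computation (as in \cite{article}) shows
\[
\gamma\norm{g'}^2\leq \norm{\delta g}^2-\norm{df_k}^2+\bigl(\text{terms measuring }\norm{\delta g}^2-\lambda\norm{g}^2\text{ and }\norm{g}^2-1\bigr),
\]
and then Lemma \ref{almost isometry}(1), Corollary \ref{cor2}, and Lemma \ref{delp} bound $\norm{\delta g}^2-\norm{df_k}^2$ and $|\norm{g}^2-1|$ by $\CC(\epsilon/\rho+\rho)$ plus a term of size $\CC\alpha$ coming from the width of the window; dividing by $\gamma$ yields $\norm{g'}^2\leq\CC\alpha\gamma^{-1}+\CC\gamma^{-1}(\epsilon/\rho+\rho)$, which together with the high-frequency estimate gives the claimed bound.

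The main obstacle is getting the low-frequency estimate sharp enough: one must track exactly how the factor $\gamma^{-1}$ (and the $\alpha\gamma^{-1}$ term) arises, which requires setting up the auxiliary quadratic form $Q'$ as in the proof of Lemma \ref{7.3} — this time truncating at $\lambda-\gamma$ rather than at $\lambda+a$ — and using the spectral gap $(\lambda+\alpha,\lambda+\beta)$ to pass between the projections $\mathbb{P}_{\lambda+\alpha}$ and $\mathbb{P}_{\lambda+\beta}$ without loss. Once the bookkeeping of the constants (all absorbed into $\CC$) is handled, the rest is a routine assembly of the already-established approximation lemmas.
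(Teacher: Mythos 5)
Your proposal follows the same route as the paper: decompose $Pf_k$ into orthogonal components $u_0,u_-,u_+$ lying in $H_{(\lambda-\gamma,\lambda+\alpha)}$, $H_{(-\infty,\lambda-\gamma]}$, $H_{[\lambda+\alpha,\infty)}$; use the spectral gap to identify $\mathbb{P}_{\lambda+\alpha}$ with $\mathbb{P}_{\lambda+\beta}$ so Lemma~\ref{7.3} with $a=\beta$ controls $\|u_+\|^2$ and $Q(u_+)$; then bound $\|u_-\|^2$ by combining the min--max bounds $Q(u_0)\leq(\lambda+\alpha)\|u_0\|^2$, $Q(u_-)\leq(\lambda-\gamma)\|u_-\|^2$ with the lower bound $Q(Pf_k)\geq(1-\sigma)\lambda$ from Lemma~\ref{almost isometry} and the estimate $\|u_0\|^2\leq\|Pf_k\|^2\leq 1+\sigma$. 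One small sign slip in your sketch: the quantity that needs to be controlled from above is $\lambda-Q(Pf_k)=\|df_k\|^2-\|\delta Pf_k\|^2$ (bounded via the almost-isometry lower bound on $\|\delta Pf_k\|$), not $\|\delta Pf_k\|^2-\|df_k\|^2$; with this direction corrected, your rearrangement $\gamma\|u_-\|^2\leq\alpha(1+\sigma)+\lambda\sigma+\beta^{-1}\sigma$ matches the paper's computation, and dividing by $\gamma$ gives exactly the two terms $\CC\alpha\gamma^{-1}$ and $\CC\beta^{-1}\gamma^{-1}(\epsilon/\rho+\rho)$.
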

\begin{proof}
    \begin{enumerate}
        \item Let $Q$ denote the discrete Dirichlet energy on $L^2(\G)$ and $u=Pf_k$. We can express $u$ as a decomposition of orthogonal vectors \[u=u_0+u_-+u_+\] where $u_0\in H_{(\lambda-\gamma,\lambda+\alpha)}(X)$, $u_-\in H_{(-\infty,\lambda-\gamma]}(X)$ and $u_+\in H_{[\lambda+\alpha,\infty)}(X)$. From Lemma \ref{7.3},
        \begin{equation} \label{u_+ bound}
            \norm{u_+}^2 \leq \frac{1}{\beta}C_{\MMM,k}\left(\frac{\epsilon}{\rho}+\rho \right)
        \end{equation}
        and the Dirichlet energy for is bounded above by
        \begin{equation}
            Q(u_+)\leq  \frac{1}{\beta}C_{\MMM,k}\left(\frac{\epsilon}{\rho}+\rho \right).\nonumber
        \end{equation}
        Using appropriate bounds on $c_\MMM$ and $\LL$ from Lemma \ref{almost isometry},
        \[Q(u)\geq \left(1-C_{\MMM,k}\left(\frac{\epsilon}{\rho}+\rho \right)\right)\lambda.\]
        Hence, 
        \[Q(u_0)+Q(u_-)=Q(u)-Q(u_+)\geq \lambda-\frac{1}{\beta}C_{\MMM,k}\left(\frac{\epsilon}{\rho}+\rho \right).\]
    
    By the min-max principle, 
    \[Q(u_0)\leq (\lambda+\alpha)\norm{u_0}^2\]
    and
    \[Q(u_-)\leq (\lambda-\gamma)\norm{u_-}^2,\]
    Thus,
    \[\lambda- \frac{1}{\beta}C_{\MMM,k}\left(\frac{\epsilon}{\rho}+\rho \right)\leq \lambda(\norm{u_0}^2+\norm{u_-}^2)+\alpha\norm{u_0}^2\gamma\norm{u_-}^2.\]
    Also, from (\ref{pfk bound}),
    \[\norm{u_0}^2\leq \norm{Pf_k}^2\leq 1+C_{\MMM,k}\left(\frac{\epsilon}{\rho}+\rho \right).\]
    Hence, \[\lambda- \frac{1}{\beta}C_{\MMM,k}\left(\frac{\epsilon}{\rho}+\rho \right)\leq\lambda(1+\sigma)+\alpha(1+\sigma)-\gamma\norm{u_-}^2\]
    where $\sigma=C_{\MMM,k}\left(\frac{\epsilon}{\rho}+\rho \right)$.
    Hence for $\rho<\frac{\alpha\gamma}{\CC}$, \[ \norm{u_-}^2\leq \beta^{-1}\gamma^{-1}C_{\MMM,k}\left(\frac{\epsilon}{\rho}+\rho \right)+\alpha\gamma^{-1}(1+\CC)\]
    Using (\ref{u_+ bound}), we derive the required assertion. 
    \end{enumerate}
    \item Following similar methods as in the first part and taking appropriate bounds for $c_\MMM$ and $\LL$, the second assertion can be obtained.
\end{proof} 
  
  \begin{theorem}
   \begin{enumerate}
       \item Let $\lambda=\lambda_k(M)$ and let $f_k$ be corresponding unit-norm eigenfunction of $\Delta_M.$ Then for every $\gamma\in (0,1)$,
       \[\norm{Pf_k-\mathbb{P}_{(\lambda-\gamma,\lambda+\gamma)}Pf_k}^2\leq \CC\gamma^{-2}\left(\frac{\epsilon}{\rho}+\rho \right).\]
       \item Let $\lambda=\lambda_k(M)$ and let $f_k$ be corresponding unit-norm eigenfunction of $\Delta_M.$ Then for every $\gamma\in (0,1)$,
       \[\norm{Iu_k-\mathbb{P}_{(\lambda-\gamma,\lambda+\gamma)}Iu_k}^2\leq \CC\gamma^{-2}\left(\frac{\epsilon}{\rho}+\rho \right).\]
   \end{enumerate}   
  \end{theorem}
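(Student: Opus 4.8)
The plan is to adapt the eigenfunction-localization argument of Burago--Ivanov--Kurylev \cite{article}, with Lemma \ref{7.3} and Lemma \ref{- gamma +alpha projection} playing the roles of their analogues. I describe part (1); part (2) is the mirror image, with $I$ in place of $P$, the part (2) statements of Lemmas \ref{7.3} and \ref{- gamma +alpha projection}, and the roles of $M$ and $\G$ interchanged. Put $\lambda=\lambda_k(M)$ and decompose orthogonally
\[
\norm{Pf_k-\mathbb{P}_{(\lambda-\gamma,\lambda+\gamma)}Pf_k}^2=\norm{\mathbb{P}_{(-\infty,\lambda-\gamma]}Pf_k}^2+\norm{\mathbb{P}_{[\lambda+\gamma,\infty)}Pf_k}^2 ,
\]
so it suffices to bound each tail by $\CC\gamma^{-2}\big(\tfrac{\epsilon}{\rho}+\rho\big)$. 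The upper tail is immediate: applying Lemma \ref{7.3}(1) with $a=\gamma$ and using $Pf_k-\mathbb{P}_{\lambda+\gamma}Pf_k=\mathbb{P}_{[\lambda+\gamma,\infty)}Pf_k$ together with $0<\gamma<1$ gives $\norm{\mathbb{P}_{[\lambda+\gamma,\infty)}Pf_k}^2\leq\gamma^{-1}\CC(\tfrac{\epsilon}{\rho}+\rho)\leq\CC\gamma^{-2}(\tfrac{\epsilon}{\rho}+\rho)$. (When $\gamma^{-2}(\tfrac{\epsilon}{\rho}+\rho)\geq 1$ the assertion is vacuous, so one may also assume $\gamma$ bounded below in terms of $\tfrac{\epsilon}{\rho}+\rho$.)

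For the lower tail the idea is to locate a spectral gap of $-\Delta_\G$ just above $\lambda$ and apply Lemma \ref{- gamma +alpha projection}(1). First I would fix a bound $m=m(\MMM,k)$ on the number of eigenvalues of $-\Delta_\G$ in $[0,\lambda+1]$, valid once $\epsilon,\rho$ are small: by the uniform geometry of $\MMM$ the counting function of $-\Delta_M$ is uniformly bounded on every bounded interval (and $\lambda\leq\LL$ by Cheng's estimate), and Theorem \ref{main 1} carries this over to $-\Delta_\G$. Since at most $m$ eigenvalues of $-\Delta_\G$ lie in $(\lambda,\lambda+\tfrac\gamma2]$, a pigeonhole argument produces a gap $(\lambda+\alpha,\lambda+\beta)$ of $\mathrm{spec}(-\Delta_\G)$ with $0<\alpha\leq\beta\leq\gamma$ and $\beta-\alpha\geq\tfrac{\gamma}{2(m+1)}$, hence $\beta^{-1}\leq 2(m+1)\gamma^{-1}$; comparing $\mathrm{spec}(-\Delta_\G)$ with $\mathrm{spec}(-\Delta_M)$ through Theorem \ref{main 1}, one chooses this gap so that its left endpoint $\alpha$ is of order $\tfrac{\epsilon}{\rho}+\rho$, since the eigenvalues of $-\Delta_\G$ lying within $O(\tfrac{\epsilon}{\rho}+\rho)$ of $\lambda$ form a cluster of size at most $m$ and one takes the first gap past it, pushing the right endpoint out to $\lambda+\gamma$ whenever the gap extends that far. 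Then Lemma \ref{- gamma +alpha projection}(1) yields
\[
\norm{\mathbb{P}_{(-\infty,\lambda-\gamma]}Pf_k}^2\leq\norm{Pf_k-\mathbb{P}_{(\lambda-\gamma,\lambda+\alpha]}Pf_k}^2\leq\CC\alpha\gamma^{-1}+\CC\beta^{-1}\gamma^{-1}\big(\tfrac{\epsilon}{\rho}+\rho\big),
\]
and both summands are $\leq\CC\gamma^{-2}(\tfrac{\epsilon}{\rho}+\rho)$ after renaming the constant, using $\alpha=O(\tfrac{\epsilon}{\rho}+\rho)$ in the first and $\beta^{-1}\leq 2(m+1)\gamma^{-1}$ in the second. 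Adding the two tail estimates gives (1).

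The step I expect to be hardest is precisely this choice of spectral gap. The term $\CC\alpha\gamma^{-1}$ coming out of Lemma \ref{- gamma +alpha projection}(1) is harmless only when the left endpoint $\alpha$ of the gap is $O(\tfrac{\epsilon}{\rho}+\rho)$, whereas the term $\CC\beta^{-1}\gamma^{-1}(\tfrac{\epsilon}{\rho}+\rho)$ is controlled only when the gap still has width comparable to $\gamma$; securing both simultaneously forces a careful case analysis of how $\mathrm{spec}(-\Delta_\G)$ accumulates above $\lambda$, in which Theorem \ref{main 1} is invoked repeatedly to convert clusterings of the graph spectrum into clusterings of the manifold spectrum, where Cheng's bound and the diameter bound limit how many eigenvalues can be involved. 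Part (2) follows the identical scheme with $I$ replacing $P$ and the part (2) lemmas replacing their part (1) versions; there the relevant counting is carried out directly for $-\Delta_M$, so no appeal to Theorem \ref{main 1} is needed to bound the number of eigenvalues near $\lambda$.
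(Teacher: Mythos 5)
Your proof takes a genuinely different, and substantially more laborious, route than the paper's — and it misses the trick the paper exploits.

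The paper's proof is a one-line substitution. In Lemma \ref{- gamma +alpha projection} the hypothesis is that \emph{no eigenvalues of $-\Delta_\G$ lie in $(\lambda+\alpha,\lambda+\beta)$}. The key observation is that one is free to take $\alpha=\beta$: the interval $(\lambda+\alpha,\lambda+\beta)$ is then empty, so the spectral-gap hypothesis holds vacuously and no information about $\mathrm{spec}(-\Delta_\G)$ is needed at all. The paper substitutes $\alpha=\beta=C_{\MMM,k}(\tfrac{\epsilon}{\rho}+\rho)^{1/2}\gamma$ (valid as soon as $\tfrac{\epsilon}{\rho}+\rho$ is small enough that $\alpha\le\gamma$), and both terms on the right-hand side of Lemma \ref{- gamma +alpha projection}(1) collapse, after using $\gamma^{-2}\ge1$, to a single $\CC\gamma^{-2}(\cdot)$ bound. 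Since $\alpha<\gamma$, the projection interval $(\lambda-\gamma,\lambda+\alpha]$ is contained in $(\lambda-\gamma,\lambda+\gamma)$, so the claimed quantity is bounded by the one the lemma controls, and one is done.

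By contrast, you never use the freedom to take $\alpha=\beta$ and instead try to locate a \emph{genuine} nonempty gap of $\mathrm{spec}(-\Delta_\G)$ just above $\lambda$ via a pigeonhole/counting argument, having first separated the two tails and disposed of the upper tail with Lemma \ref{7.3}. Besides being far more elaborate than needed, this route has a real gap: you require simultaneously $\alpha=O(\tfrac{\epsilon}{\rho}+\rho)$ and $\beta^{-1}\lesssim\gamma^{-1}$, i.e.\ a gap starting within $O(\tfrac{\epsilon}{\rho}+\rho)$ of $\lambda$ and of width comparable to $\gamma$. If $-\Delta_\G$ has several eigenvalues spaced at roughly $O(\tfrac{\epsilon}{\rho}+\rho)$ above $\lambda$, the first admissible gap has $\beta$ only of order $\tfrac{\epsilon}{\rho}+\rho$, which is much smaller than $\gamma$ precisely in the non-vacuous regime $\gamma^{-2}(\tfrac{\epsilon}{\rho}+\rho)<1$; in that case the second term $\CC\beta^{-1}\gamma^{-1}(\tfrac{\epsilon}{\rho}+\rho)\sim\CC\gamma^{-1}$ is not absorbed into $\CC\gamma^{-2}(\tfrac{\epsilon}{\rho}+\rho)$. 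Your remark that one may assume $\gamma$ is bounded below does not resolve this; the case analysis you sketch ("a careful case analysis of how $\mathrm{spec}(-\Delta_\G)$ accumulates above $\lambda$") is exactly where the argument would either fail or collapse back to the observation you should have started with. I'd encourage you to re-read the hypothesis of Lemma \ref{- gamma +alpha projection} and notice that the degenerate choice $\alpha=\beta$ is allowed — once you see that, your entire gap-hunting apparatus (the counting bound $m=m(\MMM,k)$, the pigeonhole step, the repeated invocations of Theorem \ref{main 1}) becomes unnecessary, and your splitting into tails and appeal to Lemma \ref{7.3} for the upper tail is likewise superfluous, since Lemma \ref{- gamma +alpha projection} already controls the full complement of $(\lambda-\gamma,\lambda+\alpha]$.
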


\begin{proof}
    Substituting $\alpha=\beta=C_{\MMM,k}\left(\frac{\epsilon}{\rho}+\rho \right)^\frac{1}{2}\gamma$ in Lemma \ref{- gamma +alpha projection} and using the fact that $\gamma^{-2}\geq 1$ we get the required expression.
\end{proof}
  
  \begin{theorem}
      Let $\lambda=\lambda_j(M)$ be an eigenvalue of $\Delta_M$ with multiplicity $m$, such that
$$ \lambda_{k-1} < \lambda_k = \lambda = \lambda_{k+m-1} < \lambda_{k+m}. $$
 Let $\delta_\lambda = \min\{1, \lambda_k - \lambda_{k-1}, \lambda_{k+m} - \lambda_{k+m-1}\}$.
Let $u_k, \dots, u_{k+m-1}$ be orthonormal eigenvectors  corresponding to  $\lambda_k(\Gamma), \dots, \lambda_{k+m-1}(\Gamma)$.

Then there exist orthonormal eigenfunctions $g_k, \dots, g_{k+m-1}$ of $-\Delta_M$ corresponding to $\lambda$ such that for all $j = k, \dots, k+m-1$ and $ \rho<\delta_\lambda C_{\mathcal{M}, k}^{-1}$,
\begin{equation}
\norm{u_j - P g_j}^2 \leq C_{\mathcal{M}, k} \delta_\lambda^{-2} \left(\frac{\epsilon}{\rho}+\rho \right)
\end{equation}
and
\begin{equation}
\norm{g_j - I u_j}^2 \leq C_{\mathcal{M}, k} \delta_\lambda^{-2}\left(\frac{\epsilon}{\rho}+\rho \right).
\end{equation}
\end{theorem}
  \begin{proof}
   The proof of the above theorem is analogous to that of Theorem 4 in \cite{article}, utilizing the results of Lemma \ref{- gamma +alpha projection} and Lemma \ref{almost isometry}. 
  \end{proof}

 \bibliographystyle{plain} 
\bibliography{bib}

\end{document}